\documentclass[10pt]{amsart}
\RequirePackage[a4paper, includehead, headheight=0.6cm, inner=2.0cm ,outer=2.0cm, top=2.5cm, bottom=2.5cm]{geometry}

\usepackage{graphicx} %
\usepackage{amsmath}
\usepackage{xcolor}
\numberwithin{equation}{section}
\usepackage{amssymb}
\usepackage{thmtools}
\usepackage{amsthm}
\usepackage{graphicx}
\usepackage[hyperfootnotes=false]{hyperref} 
\graphicspath{{Images/}}
\usepackage{subfig} %
\usepackage{caption} %
\usepackage{transparent}
\usepackage{float} %
\usepackage{enumitem} %
\usepackage{ mathrsfs } %
\usepackage[colorinlistoftodos]{todonotes} %
\usepackage{accents}
\usepackage{mathtools}
\usepackage{pgfplots} %
\pgfplotsset{width=6cm,compat=1.18}
\usepackage{tikz}
\usepackage{subcaption}
\usepackage{caption}
\newcommand{\de}{\, \!\mathrm d}
\renewcommand{\d}{{\!\mathrm d}}

\newcommand{\E}{\mathbb{E}}

\newcommand{\rmC}{{\ensuremath{\mathrm C}}}

\newcommand{\real}{\mathbb{R}} %
\newcommand{\Prob}{\mathbb{P}} %
\newcommand{\Nat}{\mathbb{N}} %
\newcommand{\eps}{\varepsilon} %

\newcommand{\norm}[1]{\lVert #1 \rVert} %

\newcommand{\trinorm}[1]{{\left\vert\kern-0.25ex\left\vert\kern-0.25ex\left\vert #1 
    \right\vert\kern-0.25ex\right\vert\kern-0.25ex\right\vert}} %

\newtheorem{theorem}{Theorem}[section]
\newtheorem{corollary}[theorem]{Corollary}
\newtheorem{lemma}[theorem]{Lemma}
\newtheorem{assumption}[theorem]{Assumption}
\newtheorem{definition}[theorem]{Definition}
\newtheorem{remark}[theorem]{Remark}
\newtheorem{proposition}[theorem]{Proposition}
\newtheorem{example}[theorem]{Example}

\newcommand{\Lip}{\mathop{\rm Lip}\nolimits}

\title[Linear-quadratic mean field games with Volterra dynamics]{Linear-quadratic mean field games with Volterra dynamics}

\author{Niccolò Fontana}
\address{Politecnico di Milano, Dipartimento di Matematica}
\email{niccolo.fontana@polimi.it}

\author{Mattia Martini}
\address{École Polytechnique Paris, Centre de Mathématiques Appliquées (CMAP)}
\email{mattia.martini@polytechnique.edu}

\author{Huy\^en Pham}
\address{École Polytechnique Paris, Centre de Mathématiques Appliquées (CMAP)}
\email{huyen.pham@polytechnique.edu}

\thanks{Niccolò Fontana is supported by Initiative Paul-Lévy for his research stay at École Polytechnique. 
Mattia Martini is supported by the French ANR PEPR Math-Vives project MIRTE ANR-23-EXMA-0011. N. Fontana and M. Martini were members of INdAM-GNAMPA when this work was conceived.
Huy\^en Pham is supported by the Soci\'et\'e G\'en\'erale Chair
``Risques Financiers'', the Chair AIDA-F Bank of America, 
FiME (Laboratory of Finance and Energy Markets), and the EDF-CACIB Chair ``Finance and Sustainable Development''.}
\date{}

\begin{document}

\begin{abstract}
We study a class of linear-quadratic mean field games in which the state of each agent follows a stochastic Volterra equation of convolution type. The resulting dynamics are non-Markovian and account for memory effects through a completely monotone kernel, while the mean field interaction enters both the drift and diffusion coefficients. Exploiting a Markovian lifting of the Volterra dynamics, which restores the Markov property at the price of moving to an infinite-dimensional state space, we characterize the optimal response of a representative agent for a given population mean through a system of Riccati equations in Banach spaces, thereby obtaining a feedback representation of the optimal control.
We then study the consistency condition defining the mean field equilibrium, which reduces to a forward-backward fixed point equation for the equilibrium mean: we prove existence and uniqueness of the equilibrium on short horizons and, when the control does not act on the diffusion coefficient, on arbitrary horizons under a monotonicity condition on the interaction. Finally, we consider the corresponding finite-player game and establish a quantitative propagation of chaos result. 
As a consequence, the feedback control of any mean field equilibrium, when implemented by each agent on its own state, yields an $\varepsilon_N$-Nash equilibrium for the $N$-player game, with an explicit rate as the number of agents tends to infinity.

\end{abstract}

\maketitle

\section{Introduction}
In this work, we study a class of linear-quadratic mean field games with memory, in which the state of each agent follows a stochastic Volterra equation of convolution type. 

\smallskip

Mean field games (MFGs) were introduced about twenty years ago, independently by Lasry and Lions \cite{LasryLions2006,LasryLions2006II,LasryLions2007} and by Huang, Caines, and Malhamé \cite{HuangCainesMalhame2003,HuangCainesMalhame2006,
HuangCainesMalhame2007_2}, as a framework for describing strategic interactions in stochastic systems involving a large number of agents. The key idea is to study the limiting behavior of such systems as the number of agents tends to infinity, replacing the interaction with the other players by an interaction with the aggregate behavior of the population, typically represented by the distribution of the agents' states. This leads, in the large-population limit, to a representative-agent optimization problem coupled with a consistency condition for the population distribution. Since these pioneering works, MFG theory has developed extensively from both analytical and probabilistic perspectives. We refer to \cite{CarmonaDelarue,CarmonaDelarueII} for comprehensive treatments of the theory and its probabilistic approach.

\smallskip

Among the various classes of mean field games, linear-quadratic (LQ) models have received particular attention, both for their analytical tractability and for their relevance in applications. In this setting, the state dynamics are linear and the running and terminal costs are quadratic, which often leads to explicit characterizations of equilibria in terms of Riccati equations and forward-backward systems. LQ mean field games have been studied in a variety of contexts, including decentralized equilibria \cite{HuangCainesMalhame2007}, models with common noise \cite{Graber2016}, and their connections with mean field type control \cite{BensoussanSungYamYung2016}. The LQ framework has also proved useful in applications across economics and finance, including models of systemic risk in banking systems \cite{CarmonaFouqueSun2015} and the strategic production of exhaustible resources \cite{Graber2016}. We refer to \cite[Sections~2.3.4 and~3.5]{CarmonaDelarue} and \cite[Chapter~6]{BensoussanFrehseYam2013} for further treatments of LQ mean field models.

\smallskip

The interest in stochastic Volterra equations (SVEs), dating back to the foundational work of Berger and Mizel \cite{Berger_mizel_Volterra_first, Berger_mizel_Volterra_second}, stems primarily from their versatility in capturing phenomena that classical Markovian models cannot easily reproduce such as rough sample paths and memory effects. Applications abound, from models of population dynamics to energy markets and rough volatility modeling. Yet this very versatility is the source of a fundamental mathematical challenge. Outside the special case of exponential kernels, the memory embedded in the equation destroys the Markov property of its solutions; and when the kernel is singular, i.e. $\lim_{t \to 0^+} K(t) = +\infty$, even the semimartingale property is lost. For optimal control theory, this is more than a technical inconvenience since most classical solution techniques, such as the Dynamic Programming Principle and the associated Hamilton–Jacobi–Bellman equation, are intrinsically tied to the Markov property of the underlying state process. Reconciling this requirement with genuinely path-dependent dynamics has therefore become a central theme of the recent literature, typically resolved by lifting the equation into an infinite-dimensional Banach or Hilbert space in which Markovianity is recovered by construction. Remarkable contributions in this direction include the lifting procedures developed in \cite{LQ_Volterra_AJMP, DiNunno_giordano_lifting,Gasteratos_Pannier, Hamaguchi, Possamai_Talbi_contract, Viens_Zhang_lifting}.

\smallskip

The present work is closely connected to a body of literature that has developed rapidly in recent years around optimal control and mean field games for SVEs. We cite some noteworthy contributions dealing with the LQ case. Huang, Li and Wang \cite{Huang_Li_Wang} were the first to consider a large-population LQ game whose representative state follows a stochastic Volterra equation. Their kernels are bounded, excluding singular ones, and the noise is additive, so neither the state nor the control enters the diffusion coefficient. Abi~Jaber, Neuman and Voß \cite{AJ_Neuman_Vos} extend this framework to a general class of $N$-player LQ games with $L^2$-operator costs and they treat stochastic Volterra LQ games as a particular instance of this broad class. They obtain the Nash equilibria of the $N$-player game and of its mean field limit semi-explicitly, in terms of operator resolvents, allowing for singular kernels as well as for common noise. Their approach requires the noise to enter as an exogenous input, so that neither the state nor the control acts on the diffusion coefficient, see \cite[Section~3.1 and Remark~3.2]{AJ_Neuman_Vos}. Our work departs from those of \cite{Huang_Li_Wang} and \cite{AJ_Neuman_Vos} in two respects. First, the diffusion coefficient depends on the state, on the population mean and on the control, so that the noise is multiplicative and controlled; at the same time, the kernel may be singular, which is excluded in \cite{Huang_Li_Wang}. Second, besides well-posedness under a smallness condition (which holds on short horizons), we obtain well-posedness on arbitrary horizons when the control does not act on the diffusion coefficient, under a monotonicity condition on the interaction; since the state and the population mean may still enter the diffusion coefficient, this case is covered neither by \cite{Huang_Li_Wang} nor by \cite{AJ_Neuman_Vos}.

In a different direction, Friz, Gasteratos, Horst and Theodorakopoulos \cite{Friz_Gasteratos_Horst_theodorakopoulos} study LQ mean field games driven by a common noise exploiting rough paths theory. Linear-quadratic mean field games with infinite-dimensional state spaces have also been studied recently, for Hilbert space-valued dynamics, see \cite{LiuFiroozi,FedericoGhilliGozzi}. To the best of our knowledge, the existing literature on LQ mean field games with stochastic Volterra dynamics does not cover the combination of control-dependent diffusion coefficients and possibly singular Volterra kernels considered here.

\smallskip

\textbf{Our contribution.}
We study a linear-quadratic mean field game in which the players have memory. The state of each player is governed by a stochastic Volterra equation of convolution type, so that its dynamics is genuinely non-Markovian: the whole past trajectory, weighted by a convolution kernel $K$, influences the present evolution. The mean field interaction enters through the mean of the population, and it affects both the drift and the diffusion coefficient of each player, as does the control.

Our structural assumption is that the kernel is completely monotone. By Bernstein's theorem this amounts to representing $K$ as the Laplace transform of a Radon measure $\mu$, and it is what makes the analysis tractable: following Abi Jaber, Miller and Pham \cite{LQ_Volterra_AJMP}, the Volterra dynamics can be lifted to an infinite-dimensional state, indexed by the variable $\theta$ in the support of $\mu$, which is Markovian and from which the original state is recovered by ``projection''. We choose this route because it lets the control act on the diffusion coefficient as well, a case not covered by the approaches mentioned above. 

Solving the mean field game then consists of the two usual steps, the best response and the fixed point. In the first one the population mean is frozen to a deterministic curve, and the representative player faces a linear-quadratic control problem for a Volterra equation, which is precisely the class of problems solved in \cite{LQ_Volterra_AJMP}: we obtain the optimal control in feedback form on the lifted state, characterized by a system of Riccati equations in Banach spaces. %
In the second step the consistency condition closes the game and reduces to a fixed point equation for the equilibrium mean. Since the Riccati equation associated with the frozen mean is solved backward in time, this equation has a forward-backward structure, and its solvability on arbitrary horizons is not guaranteed in general. We obtain existence and uniqueness by a contraction argument on sufficiently short horizons, under an explicit smallness condition. When the control does not act on the diffusion coefficient, we remove this restriction under a monotonicity condition on the interaction.

Finally, the interest of the mean field limit lies in what it gives back to the game with finitely many players: the equilibrium control of the limit problem provides a candidate equilibrium for the $N$-player game. 
We show that this is indeed the case for every mean field equilibrium: the strategy profile in which every player implements the equilibrium feedback on its own state is an $\eps_N$-Nash equilibrium, with an explicit rate $\eps_N$ of order $N^{-1/2}$. The proof rests on a propagation of chaos for the lifted particles, which in particular yields the corresponding convergence for the states themselves. The results available for Volterra mean field dynamics at the level of the states, such as \cite{Huang_Li_Wang}, are not sufficient for our purposes, since the equilibrium control is given in feedback form in terms of the lifted state, and the estimates have therefore to be established at that level.

Although we do not develop them here, these results open the way to applications in which both memory and strategic interaction are present. In advertising models the goodwill of a firm reacts to past investment through a decaying kernel, and firms competing for the same market are naturally described as players in a game; in energy markets, production and storage decisions are constrained by inertia and by the persistence of past choices, while agents interact through the aggregate supply. In both cases the linear-quadratic structure keeps the equilibrium explicitly computable, and the mean field formulation is the natural device for a large population of symmetric agents. We leave the study of such models to future work.

\medskip

\textbf{Organization of the paper.} 
In Section~\ref{sec:formulation} we introduce the Volterra $N$-player game and its mean field counterpart, and state the two main results: the well-posedness of the mean field game (Theorem~\ref{thm:mfg-wellposed}) and the approximate Nash property (Theorem~\ref{thm:eps-nash}). Section~\ref{sec:mfg-resolution} proves the former, following the two classical steps of the mean field game paradigm: %
 we first solve the best response of a representative player for a frozen mean, and we then close the fixed point induced by the consistency condition on short horizons; we finally extend the result to arbitrary horizons when the control does not act on the diffusion coefficient.
Section~\ref{sec:propagation} proves the latter: we first establish a propagation of chaos, quantifying how each player in the $N$-player game approaches its mean field limit, and we then deduce the approximate Nash property. Some technical material is deferred to the appendices: Appendix~\ref{appendix:volterra tools} collects classical results on convolution Volterra kernels, Appendix~\ref{appendix:B} presents the estimates on the Riccati equations underlying the mean field game, and Appendix~\ref{appendix:convergence tools} presents the auxiliary results used in the propagation of chaos.

\section{Problem formulation and main results}\label{sec:formulation}
We are interested in a linear-quadratic stochastic differential $N$-player game of Volterra type: the state of each player follows a controlled dynamics that is linear in the state and in the control and depends also on the past through a suitable convolution kernel, while the cost is quadratic.

For the rest of the discussion, we fix a complete probability space $(\Omega,\mathcal{F},\Prob)$ supporting a family of independent one-dimensional Brownian motions $\{W^i\}_{i\geq 0}$, and we denote by $\mathbb{F}=(\mathcal{F}_t)_{t \in [0,T]}$ the $\Prob$-augmentation of the filtration generated by $\{W^i\}_{i\geq0}$, which satisfies the usual hypotheses. For simplicity we denote $W^0$ by $W$.

\medskip

 We fix a finite time horizon $T>0$ and two integers $d,q \geq 1$, standing respectively for the dimensions of the state and of the control. The state dynamics are governed by a continuous initial curve $g_0 \colon [0,T] \to \real^d$, two measurable functions $\beta,\gamma \colon [0,T] \to \real^d$, the matrices $B,D \in \real^{d\times d}$ and $C,F \in \real^{d\times q}$, and a scalar convolution kernel $K \colon [0,T] \to \real$. Throughout, $|\cdot|$ denotes the Euclidean norm on vectors and the associated operator norm on matrices, so that $|Ax| \leq |A|\,|x|$ and $|AB| \leq |A|\,|B|$ whenever the products are defined. The set of admissible controls is
\[
\mathcal{A} := \Big\{ \alpha \colon \Omega \times [0,T] \to \real^q \ \ \mathbb{F}\text{-progressively measurable} \ : \ \sup_{t\in[0,T]}\E\big[|\alpha_t|^4\big] < \infty \Big\}.
\]
The running cost is the quadratic function $f \colon \real^d \times \real^q \to \real$,
\begin{equation}
\label{eq:running-cost}
    f(x,a) := x^\top Q x + a^\top R a + 2 x^\top L,
\end{equation}
where $Q \in S^d_+$, $R \in S^q_+$ and $L \in \real^d$; here $S^k_+$ denotes the cone of symmetric positive semi-definite $k \times k$ matrices. The structural and regularity conditions on the data used throughout the paper are gathered in Assumption~\ref{ass:kernel} and Assumption~\ref{ass:data} below.

\subsection{Volterra \texorpdfstring{$N$}{N}-player game}
\label{subsec:Nplayer}
We consider a symmetric game among $N \geq 2$ players. The state $X^{i,N} \in \real^d$ of player $i \in \{1,\dots,N\}$ evolves according to a controlled stochastic Volterra integral equation in which the interaction enters through the empirical mean of the states. More precisely, given an admissible profile $\boldsymbol{\alpha} = (\alpha^{1,N},\dots,\alpha^{N,N}) \in \mathcal{A}^N$, the states solve, for every $1 \leq i \leq N$,
\begin{equation}
\label{eq:Nplayer-SVE}
\begin{aligned}
    X_t^{i,N} &= g_0(t) + \int_0^t K(t-s)\Big[\beta(s) + B\big(X_s^{i,N} - \overline{X}^N_s\big) + C\alpha_s^{i,N}\Big]\de s\\
    &\quad + \int_0^t K(t-s)\Big[\gamma(s) + D\big(X_s^{i,N} - \overline{X}^N_s\big) + F\alpha_s^{i,N}\Big]\de W_s^i, \qquad t \in [0,T],
\end{aligned}
\end{equation}
where
\[
\overline{X}^N_t := \frac{1}{N}\sum_{j=1}^N X^{j,N}_t
\]
is the empirical mean of the population, and each player is driven by its own independent Brownian motion $W^i$. Here $\boldsymbol{\alpha} = (\alpha^{1,N},\dots,\alpha^{N,N})$ ranges over the set of admissible profiles $\mathcal{A}^N$, that is, each component $\alpha^{i,N}$ belongs to $\mathcal{A}$; equivalently, $\boldsymbol{\alpha}$ is an $\mathbb{F}$-progressively measurable process with values in $\real^{Nq}$ such that $\sup_{t\in[0,T]}\E[|\boldsymbol{\alpha}_t|^4] < \infty$. 
Since $\mathbb{F}$ is generated by the whole family $\{W^i\}_{i\geq0}$, an admissible control may depend on the noise of the entire system: this corresponds to a centralized information structure. We work with this larger class because it enlarges the set of deviations against which the inequality of Definition~\ref{def:nash} has to hold, making the approximate Nash property of Theorem~\ref{thm:eps-nash} stronger. However, the strategies we construct in Section~\ref{sec:propagation} are of feedback type, each player using only its own state, in the sense made precise in Section~\ref{sec:standing-assumptions}. Feedback strategies fit into this framework: once the corresponding closed loop system is solved, a feedback profile defines a process, which belongs to $\mathcal{A}^N$ as soon as it has bounded fourth moments.

 Under Assumptions~\ref{ass:kernel} and~\ref{ass:data}, for every $N \geq 2$ and every admissible profile $\boldsymbol{\alpha} \in \mathcal{A}^N$ the coupled system \eqref{eq:Nplayer-SVE} admits a unique continuous adapted solution with $\sup_{1\leq i \leq N}\sup_{t\in[0,T]}\E[|X^{i,N}_t|^4] < \infty$; the well-posedness is discussed in Section~\ref{sec:propagation}.

Each player minimizes an individual cost that couples its own state and control with the empirical mean of the population. The cost of player $i$ associated with the profile $\boldsymbol{\alpha} \in \mathcal{A}^N$ is
\begin{equation}
\label{eq:Nplayer-cost}
    J^{i,N}(\boldsymbol{\alpha}) := \E\left[ \int_0^T \Big\{ f\big(X_t^{i,N},\alpha_t^{i,N}\big) + \big(\overline{X}^N_t\big)^\top Q\, \overline{X}^N_t + 2\big(\overline{X}^N_t\big)^\top L \Big\}\de t \right],
\end{equation}
where $f$ is the running cost \eqref{eq:running-cost}.

For a profile $\boldsymbol{\alpha} \in \mathcal{A}^N$, a player index $i$ and an admissible control $\eta \in \mathcal{A}$, we write
\[
(\boldsymbol{\alpha}^{-i},\eta) := (\alpha^{1,N},\dots,\alpha^{i-1,N},\eta,\alpha^{i+1,N},\dots,\alpha^{N,N})
\]
for the profile in which player $i$ deviates to $\eta$ while the other players keep their strategies. In general, the players $j\neq i$ keep the processes $\alpha^{j,N}$ (open loop convention). For profiles in feedback form, such as the one constructed in Section~\ref{sec:propagation}, the players $j\neq i$ keep instead their feedback maps, so that their controls in $(\boldsymbol{\alpha}^{-i},\eta)$ are obtained by applying these maps to their states in the new system, and in general differ from $\alpha^{j,N}$; the precise convention is given in Section~\ref{sec:standing-assumptions}.

The solution concept we adopt for the $N$-player game is that of Nash equilibrium. Since constructing exact equilibria for a finite number of players is in general delicate, we also introduce a relaxed notion, the $\eps$-Nash equilibrium.
\begin{definition}[Nash and approximate Nash equilibria]
\label{def:nash}
    Let $\eps \geq 0$. A profile $\boldsymbol{\alpha}^\star \in \mathcal{A}^N$ is an \emph{$\eps$-Nash equilibrium} of the $N$-player game if, for every player $1 \leq i \leq N$ and every admissible deviation $\eta \in \mathcal{A}$,
    \begin{equation}
        J^{i,N}(\boldsymbol{\alpha}^\star) \leq J^{i,N}\big(\boldsymbol{\alpha}^{\star,-i},\eta\big) + \eps.
    \end{equation}
    When $\eps = 0$, $\boldsymbol{\alpha}^\star$ is a \emph{Nash equilibrium}.
\end{definition}

\subsection{Volterra mean field game}
\label{subsec:MFG-model}
Explicitly identifying Nash equilibria of the $N$-player game of Section~\ref{subsec:Nplayer} is in general difficult, the more so as the number of players grows. The mean field game is the asymptotic device that circumvents this difficulty by exploiting the symmetry of the players. Heuristically, as $N \to \infty$ the empirical mean $\overline{X}^N$ is expected to concentrate, by a law of large numbers, around a deterministic curve, which plays the role of the population mean. Freezing the interaction term to such a curve decouples the players and reduces the analysis to the control problem faced by a single \emph{representative player}, to be closed by the self-consistency requirement that the frozen curve coincide with the mean of the representative optimal state.

Given a deterministic curve $m$ in the space $\rmC([0,T];\real^d)$ of continuous $\real^d$-valued functions on $[0,T]$, the representative player controls the stochastic Volterra integral equation
\begin{equation}
\label{eq:MFG-SVE}
\begin{aligned}
    X_t^{\alpha} &= g_0(t) + \int_0^t K(t-s)\big[\beta(s) + B(X_s^{\alpha} - m_s) + C\alpha_s\big]\de s\\
    &\quad + \int_0^t K(t-s)\big[\gamma(s) + D(X_s^{\alpha} - m_s) + F\alpha_s\big]\de W_s, \qquad t \in [0,T],
\end{aligned}
\end{equation}
driven by $W = W^0$, and minimizes over $\alpha \in \mathcal{A}$ the cost
\begin{equation}
\label{eq:MFG-cost}
    J^m(\alpha) := \E\left[ \int_0^T \Big\{ f(X_t^{\alpha},\alpha_t) + m_t^\top Q\, m_t + 2 m_t^\top L \Big\}\de t \right].
\end{equation}
Since $m$ is continuous, hence bounded, equation \eqref{eq:MFG-SVE} is well-posed for every $\alpha \in \mathcal{A}$, and its solution satisfies $\sup_{t\in[0,T]}\E[|X^\alpha_t|^4] < \infty$ (see \cite[Theorem 4.4]{LQ_Volterra_AJMP}).

\begin{remark}
\label{rmk:cost-coupling}
    The population mean influences a player in two structurally different ways. In the dynamics \eqref{eq:Nplayer-SVE} it is a genuine coupling, entering the drift and diffusion coefficients through $B$ and $D$. In the cost \eqref{eq:Nplayer-cost}, instead, it appears only through the additive term $(\overline{X}^N)^\top Q\,\overline{X}^N + 2(\overline{X}^N)^\top L$, with no cross term coupling $X^{i,N}$ with $\overline{X}^N$. Accordingly, once the mean is frozen to a curve $m$, the corresponding contribution $m_t^\top Q\, m_t + 2 m_t^\top L$ in \eqref{eq:MFG-cost} is deterministic and does not affect the minimization, so that the representative player faces a standard linear-quadratic Volterra control problem. In particular, the matrices $Q,L$ in the interaction term could be replaced by arbitrary $\bar{Q} \in S^d_+$ and $\bar{L} \in \real^d$ without altering any of the arguments; we keep $Q,L$ only to lighten the notation.
\end{remark}

The consistency condition then closes the game: it selects, among all frozen curves $m$, the one that coincides with the mean of the corresponding optimal state, which we denote by $\mathfrak{m}$. This leads to the following notion of solution.

\begin{definition}[Mean field game solution]
\label{def:MFG-solution}
    A \emph{solution} of the Volterra mean field game is a pair $(\hat{\alpha},\mathfrak{m}) \in \mathcal{A} \times \rmC([0,T];\real^d)$ such that:
    \begin{enumerate}
        \item[(i)] \emph{(best response)} $\hat{\alpha}$ minimizes $J^{\mathfrak{m}}$ over $\mathcal{A}$, where $X^{\hat\alpha}$ is the solution of \eqref{eq:MFG-SVE} associated with $\mathfrak{m}$;
        \item[(ii)] \emph{(consistency)} the following fixed point condition holds:
        $$
        \mathfrak{m}_t = \E\big[X_t^{\hat\alpha}\big], \qquad t \in [0,T].
        $$
    \end{enumerate}
\end{definition}

\subsection{Assumptions and main results}
\label{sec:standing-assumptions}
In this section we collect the standing assumptions on the data and state the two main results of the paper, whose proofs occupy Sections~\ref{sec:mfg-resolution} and~\ref{sec:propagation}.

\begin{assumption}[Kernel]
\label{ass:kernel}
The scalar kernel $K \colon (0,T] \to \real$ satisfies the following conditions.
\begin{enumerate}
    \item[(i)] $K$ is completely monotone, that is, $(-1)^k\frac{\d^k}{\d t^k}K(t) \geq 0$ for every $t > 0$ and every $k \in \Nat$. By Bernstein's theorem \cite[Theorem 2.5]{Gripenberg-Londed-Staffans}, there exists a Radon measure $\mu$ on $[0,+\infty)$ such that
    $$
    K(t) = \int_0^\infty e^{-t\theta}\,\mu(\de\theta).
    $$
    \item[(ii)] There exists $\zeta \in (0,\tfrac{1}{4})$ such that
    \begin{equation}
    \label{hp:integrability on mu}
        \int_0^\infty 1\land \theta^{-\zeta}\,\mu(\de\theta) < \infty.
    \end{equation}
\end{enumerate}
\end{assumption}
Condition~(ii) in Assumption~\ref{ass:kernel} ensures that the kernel is at least square-integrable, which gives the well-posedness of the stochastic integral in the It\^o sense, and that the trajectories of $X^\alpha$ are continuous (see \cite{LQ_Volterra_AJMP} and \cite{Hamaguchi} for further details). We also highlight that the class of completely monotone kernels is sufficiently broad to include many kernels of practical interest, including singular ones.  
\begin{example} We give some notable examples of completely monotone kernels satisfying Assumption~\ref{ass:kernel}.
    \begin{itemize}
        \item[(i)]  The exponential kernel: for $\theta_0 \geq 0$
        $$
        K(t) = e^{-\theta_0t}, \quad t \geq 0
        $$
        has as corresponding Bernstein measure $\mu(\de \theta) = \delta_{\theta_0}(\de\theta)$. The case $\theta_0=0$ corresponds to $K\equiv1$, that is, to the classical Markovian setting.
        \item[(ii)] The fractional kernel: for $H \in (\frac14,\frac12)$ 
        $$
        K(t) = \frac{1}{\Gamma(H+1/2)}t^{H-1/2}, \quad t >0
        $$
        has as corresponding Bernstein measure $\mu(\de\theta)=\frac{\theta^{-H-1/2}}{\Gamma(H+1/2)\Gamma(1/2-H)}\de\theta$.
        \item[(iii)] The gamma kernel: for $\theta_0 > 0$ and $H \in (\frac14,\frac12)$ 
        $$
        K(t) = \frac{1}{\Gamma(H+1/2)}t^{H-1/2}e^{-\theta_0t}, \quad t >0
        $$
        has as corresponding Bernstein measure $\mu(\de\theta)=\frac{(\theta-\theta_0)^{-H-1/2}}{\Gamma(H+1/2)\Gamma(1/2-H)}\mathbf{1}_{(\theta_0,\infty)}(\theta)\de\theta$.
        \item[(iv)] In general, if $K_1$ and $K_2$ are two completely monotone kernels, then $K_1+K_2$ and $K_1K_2$ are completely monotone as well. If $K_1$ and $K_2$ satisfy Assumption~\ref{ass:kernel}, so does $K_1+K_2$, and so does $K_1K_2$ as soon as one of the two kernels is bounded; for instance, (iii) is the product of (ii) and (i).
    \end{itemize}
\end{example}
\begin{assumption}[Coefficients]
\label{ass:data}
The coefficients of the problem satisfy the following conditions.
\begin{enumerate}
    \item[(i)] The initial curve $g_0 \colon [0,T] \to \real^d$ is continuous, hence bounded on $[0,T]$.
    \item[(ii)] The functions $\beta,\gamma \colon [0,T] \to \real^d$ are measurable and bounded.
    \item[(iii)] The control cost is coercive: there exists $\lambda > 0$ such that $R - \lambda I_q \in S^q_+$. In particular $R$ is positive definite, which strengthens the standing requirement $R \in S^q_+$ from the definition of the running cost \eqref{eq:running-cost}.
\end{enumerate}
\end{assumption}

We can now state the two main results of the paper. The first one concerns the resolution of the mean field game.

\begin{restatable}[Well-posedness of the mean field game]{theorem}{mfg}
\label{thm:mfg-wellposed}
    Suppose Assumptions~\ref{ass:kernel} and~\ref{ass:data} hold, and set
    $$
    \rho_T := \Lip_\psi\,\norm{K}_{L^1(0,T)},
    $$
    where $\Lip_\psi$ is the constant given in Lemma~\ref{lemma:psi_lip}, which depends only on $\mu$, $T$ and the matrices $B,C,D,F,Q,R$. If $\rho_T<1$, the Volterra mean field game of Definition~\ref{def:MFG-solution} admits a unique solution $(\hat{\alpha},\mathfrak{m}) \in \mathcal{A} \times \rmC([0,T];\real^d)$.
\end{restatable}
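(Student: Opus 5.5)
The proof follows the two-step mean field game scheme: first solve the frozen-mean best response, then reduce the consistency condition to a contraction on $\rmC([0,T];\real^d)$.

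\emph{Step 1 (best response).} Fix $m \in \rmC([0,T];\real^d)$. The term $m_t^\top Q m_t + 2m_t^\top L$ in \eqref{eq:MFG-cost} is deterministic (Remark~\ref{rmk:cost-coupling}). We are therefore left with a linear-quadratic Volterra control problem whose inhomogeneous coefficients are the bounded curves $\beta - Bm$ and $\gamma - Dm$. We lift the dynamics via the Bernstein representation of $K$ to the Markovian state $Y_t(\theta)$, $\theta\in\operatorname{supp}\mu$, with $X_t = g_0(t) + \int Y_t(\theta)\,\mu(\de\theta)$, as in \cite{LQ_Volterra_AJMP}. Their results then give:
\begin{itemize}
\item[(a)] a quadratic Riccati equation for an operator kernel $\Gamma$, which does not depend on $m$ and is well-posed thanks to $R \geq \lambda I_q$;
\item[(b)] a linear backward equation for a first-order term $\Lambda^m$, driven affinely by $m$;
\item[(c)] a unique optimal control in feedback form, $\hat\alpha^m_t = -\hat R_t^{-1}\big(\text{linear in } Y_t \text{ via } \Gamma_t \;+\; \text{affine in } \Lambda^m_t\big)$, where $\hat R_t = R + F^\top(\cdots)F \geq \lambda I_q$.
\end{itemize}
Uniqueness of the minimizer follows from the strict convexity of $J^m$ in $\alpha$, which is a consequence of $R>0$ and the linearity of the state in $\alpha$.

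\emph{Step 2 (mean map).} Take expectations in the closed-loop equation. Stochastic integrals have zero mean, and the feedback is affine in $Y$, so $\bar x^m_t := \E[X^{\hat\alpha^m}_t]$ solves a deterministic linear Volterra equation of the form
\[
\bar x^m_t = g_0(t) + \int_0^t K(t-s)\,\psi_s(m)\,\de s,
\]
where $\psi(m)$ collects $\beta$, $B(\bar x^m - m)$, and $C\E[\hat\alpha^m]$, with the latter expressed through $\Gamma$ and $\Lambda^m$. I expect Lemma~\ref{lemma:psi_lip} to provide exactly the estimate
\[
\sup_{s}|\psi_s(m)-\psi_s(m')| \leq \Lip_\psi\,\|m-m'\|_\infty .
\]
It should rest on: the Riccati bounds of Appendix~\ref{appendix:B}; the linear dependence of $\Lambda^m$ on $m$ via the backward equation; and a Gronwall-type argument for Volterra equations with $L^1$ kernels from Appendix~\ref{appendix:volterra tools}, used to control the feedback of $\bar x^m$ into $\psi$. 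This constant depends only on $\mu$, $T$, $B,C,D,F,Q,R$. Granting it, we define $\Phi(m) := \bar x^m$, which maps $\rmC([0,T];\real^d)$ into itself because $\bar x^m$ is continuous by the continuity results of \cite{LQ_Volterra_AJMP}. We then get
\[
|\Phi(m)_t-\Phi(m')_t| \le \int_0^t |K(t-s)|\,\Lip_\psi\|m-m'\|_\infty\,\de s \le \rho_T\|m-m'\|_\infty .
\]

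\emph{Step 3 (conclusion).} If $\rho_T<1$, Banach's fixed point theorem gives a unique $\mathfrak m$ with $\Phi(\mathfrak m) = \mathfrak m$. Setting $\hat\alpha := \hat\alpha^{\mathfrak m}$, we check $\hat\alpha\in\mathcal A$: the fourth moments are bounded because the feedback is affine in a lifted state with bounded moments. Then $(\hat\alpha,\mathfrak m)$ solves Definition~\ref{def:MFG-solution}. For uniqueness, let $(\alpha',m')$ be any other solution. Uniqueness of the best response forces $\alpha' = \hat\alpha^{m'}$, so consistency gives $m' = \Phi(m')$, and hence $m' = \mathfrak m$.

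The main obstacle is the Lipschitz estimate behind Lemma~\ref{lemma:psi_lip}. It requires bounding $\Lambda^m-\Lambda^{m'}$ uniformly in the lifted Banach norm, which involves singular $\theta$-weights, and closing the mean's own Volterra equation. If that lemma is only stated for $\Lambda$, I would first prove the affine-in-$m$ representation and then apply the resolvent bound for $K$ to absorb the $\bar x^m$ dependence.
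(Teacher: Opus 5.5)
\textbf{There is a genuine gap in Step~2.} Your $\psi$ is not the map of Lemma~\ref{lemma:psi_lip}. Yours collects $\beta$, $B(\bar x^m-m)$ and $C\E[\hat\alpha^m]$, and both $\bar x^m$ and $\E[\hat\alpha^m]$ are computed from the \emph{true} mean $\E[Y^{\hat\alpha^m}]$ of the best response. That mean solves its own forward linear Volterra equation in $L^1_\mu$. This equation does not close in $\bar x^m$ alone, because $\E[\hat\alpha^m_t]$ involves $\int_0^\infty S(\Gamma_t)(\theta)\E[Y_t(\theta)]\mu(\de\theta)$ with a $\theta$-dependent weight.

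As a result, the Lipschitz constant of $m\mapsto\Phi(m)$ involves $|B|$ and the resolvent of $(|B|+c_Rc_S|C|)K$, and it is not bounded by $\rho_T$. Here is a concrete counterexample. Take $d=q=1$, $K\equiv1$ (so $\mu=\delta_0$), $C=0$ and $B=b>0$. Then $\Lip_\psi=0$, so $\rho_T=0$ for every $T$. For $m-m'\equiv\epsilon$, the difference $\delta:=\Phi(m)-\Phi(m')$ solves $\dot\delta_t=b(\delta_t-\epsilon)$ with $\delta_0=0$. Hence $\sup_t|\delta_t|=(e^{bT}-1)\epsilon$, which exceeds $\epsilon$ as soon as $bT>\ln 2$. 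So $\Phi$ is not a sup-norm contraction, even though the hypothesis holds and the theorem is true there (the equilibrium mean is $g_0+K*\beta$). Absorbing the $\bar x^m$ dependence with a forward resolvent bound, as you suggest at the end, would prove well-posedness under a different smallness condition, not under $\rho_T<1$.

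\textbf{The missing idea is the choice of unknown.} Parametrize the candidate mean as $m^M=g_0+\mu[M]$ with $M\in\rmC([0,T];L^1_\mu)$. In the averaged lifted dynamics, replace the unknown $\E[Y^{\alpha^*}]$ by the candidate $M$ itself. Then the drift terms $Bg_0-Bm^M+B\mu[M]$ cancel identically, and the feedback term becomes $\int_0^\infty S(\Gamma_s)(\theta)M_s(\theta)\mu(\de\theta)$. The map
\[
\mathcal T(M)_t(\theta)=\int_0^t e^{-\theta(t-s)}[\beta(s)+\psi_s(M)]\de s
\]
is then explicit: no forward equation has to be solved. Moreover, $\psi$ is Lipschitz with exactly $c_R|C|(\Lip_{h,m}+\Lip_{h,\ell}c_K+c_S)$; the only Gronwall argument needed is the backward one on $\Lambda^m$, which produces $c_K$.

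The price is that fixed points of $\mathcal T$ must be shown to be equilibria, and conversely. This is Lemma~\ref{lemma:equivalence fixed point}: $\Delta=\E[Y^{\alpha^*}]-M$ satisfies a homogeneous linear Volterra equation, and a Gronwall argument gives $\Delta\equiv0$. The contraction of $\mathcal T$ on $\rmC([0,T];L^1_\mu)$ with constant $\rho_T$ then follows by Tonelli, exactly as in your last display. Your Step~3 goes through with $\mathfrak m=g_0+\mu[\mathfrak M]$.
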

The proof is carried out in Section~\ref{sec:mfg-resolution} and is based on a Markovian lifting procedure, following the approach of \cite{LQ_Volterra_AJMP}: exploiting the specific structure of the Volterra kernel, the non-Markovian dynamics can be reformulated as an infinite-dimensional Markovian system. Within this setting, for a frozen curve $m$, the optimal control of the representative player is obtained in affine feedback form, characterized by a system of Riccati equations. The consistency condition then reduces to a fixed point equation for the mean of the lifted state, driven by a map $\psi$ which is Lipschitz with constant $\Lip_\psi$; the quantity $\rho_T$ is the resulting contraction constant on $[0,T]$, and the condition $\rho_T<1$ holds on every sufficiently short horizon (Remark~\ref{rmk:rho}).
When the control does not act on the diffusion coefficient and the interaction pulls each player towards the population mean, the smallness condition can be removed, see Theorem~\ref{thm:global F=0}; some condition of this type is needed in general, see Remark~\ref{rmk:global F=0}.

\medskip

The second result concerns the $N$-player game, where we consider profiles in \emph{distributed feedback form}: each player applies the same map $\phi(t,\cdot)$ to the Markovian lift $Y^{i,N}_t\in L^1_\mu$ of its own state, see \eqref{eq:particles lifted}. For such profiles, the deviation in Definition~\ref{def:nash} is understood in the closed loop sense of \cite[Section~2.1.2]{CarmonaDelarue}: the players $j\neq i$ keep the map $\phi$, applied to their own lifted states, while player $i$ may use any $\eta\in\mathcal{A}$; the open loop convention is discussed in Remark~\ref{rmk:open loop}. By Remark~\ref{rmk:feedback}, every solution of the mean field game provides such a feedback, and the second result shows that it yields an approximate Nash equilibrium, with an explicit rate in the number of players.
\begin{restatable}[Approximate Nash equilibrium]{theorem}{epsnash}
\label{thm:eps-nash}
    Suppose Assumptions~\ref{ass:kernel} and~\ref{ass:data} hold, and let $(\hat\alpha,\mathfrak{m})$ be a solution of the mean field game in the sense of Definition~\ref{def:MFG-solution}. For every $N \geq 2$, let $(\alpha^{i,N})_{1\leq i\leq N}$ be the strategy profile in which each player applies the feedback control of $\hat\alpha$ to its own lifted state. Then there exists a constant $c>0$, independent of $N$, such that $(\alpha^{i,N})_{1\leq i\leq N}$ is an $\eps_N$-Nash equilibrium of the $N$-player game, in the sense of Definition~\ref{def:nash}, with $\eps_N \leq c/\sqrt{N}$ (and so $\eps_N \downarrow 0$ as $N\to\infty$).
\end{restatable}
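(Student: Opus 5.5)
The approach is the standard comparison through the mean field limit, carried out at the level of the Markovian lift, since the feedback $\phi(t,\cdot)$ acts on $Y^{i,N}_t\in L^1_\mu$.

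\emph{Step 1: propagation of chaos for the equilibrium profile.} Let $Y^{i,N}$ be the lifted states of the $N$-player system in which every player uses $\phi$. Let $\hat Y^i$ be i.i.d.\ copies of the lifted optimal mean field state, driven by the same $W^i$ with the mean frozen at $\mathfrak m$. By consistency, $\E[\hat X^i_t]=\mathfrak m_t$. The feedback from the best-response step is affine in the lift, $\phi(t,y)=\Theta_t y+\vartheta_t$, with $\Theta$ and $\vartheta$ bounded thanks to the Riccati estimates of Appendix~\ref{appendix:B}. Hence $Y^{i,N}-\hat Y^i$ solves a linear lifted equation whose only forcing term is $B(\overline X^N-\mathfrak m)$ in the drift and $D(\overline X^N-\mathfrak m)$ in the diffusion.

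We split $\overline X^N-\mathfrak m=(\overline X^N-\overline{\hat X}^N)+(\overline{\hat X}^N-\mathfrak m)$. The second term is an average of i.i.d.\ centred variables with bounded fourth moments, so its $L^2$ norm is $O(N^{-1/2})$ uniformly in $t$. For the first term, we average the difference equations over $i$. Taking second moments and using the Volterra Gronwall inequality for the kernel $|K|^2$ (Appendix~\ref{appendix:volterra tools}) then gives
\[
\sup_{t}\E\big|X^{i,N}_t-\hat X^i_t\big|^2+\sup_t\E\big\|Y^{i,N}_t-\hat Y^i_t\big\|^2\le c/N,
\]
in a norm on the lift that controls $\phi$. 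Here $\Theta_t$ must be bounded on whichever weighted $L^1_\mu$-type space is used; this is what Appendix~\ref{appendix:convergence tools} should provide.

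\emph{Step 2: costs of the profile.} The cost is quadratic and every process involved has bounded fourth moments. Cauchy--Schwarz therefore gives $|J^{i,N}(\boldsymbol\alpha)-J^{\mathfrak m}(\hat\alpha)|\le c\,N^{-1/2}$, because $\alpha^{i,N}-\hat\alpha^i=\Theta(Y^{i,N}-\hat Y^i)$.

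\emph{Step 3: deviations.} Fix player $i$ and any $\eta\in\mathcal A$. The other players keep the feedback $\phi$ on their own states, and the resulting system is denoted $\tilde X^{\cdot,N}$. We first reduce to deviations with $\E\int_0^T|\eta_t|^2\,\d t\le M$ for some $M$ independent of $N$. This is possible because, by coercivity of $R$ (Assumption~\ref{ass:data}(iii)), larger deviations cost more than $J^{i,N}(\boldsymbol\alpha)+1$. Under this bound, player $i$ affects $\tilde{\overline X}{}^N$ only through a term of size $1/N$. Repeating Step~1 for the players $j\neq i$ then gives $\E|\tilde{\overline X}{}^N_t-\mathfrak m_t|^2\le c(1+M)/N$.

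Next, let $X^{\eta}$ be the solution of \eqref{eq:MFG-SVE} with frozen mean $\mathfrak m$ and control $\eta$. The difference $\tilde X^{i,N}-X^\eta$ is again driven only by the mean discrepancy, so $\sup_t\E|\tilde X^{i,N}_t-X^\eta_t|^2\le c/N$. Consequently $J^{i,N}(\boldsymbol\alpha^{-i},\eta)\ge J^{\mathfrak m}(\eta)-c/\sqrt N$. Since $\hat\alpha$ is optimal for $J^{\mathfrak m}$, we have $J^{\mathfrak m}(\eta)\ge J^{\mathfrak m}(\hat\alpha)$, and combining with Step~2 yields the claim with $\eps_N\le c/\sqrt N$.

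The main obstacle is making the constant uniform over deviations. The $L^2$ estimate for the deviating player depends on the control only through $\E\int|\eta|^2$ and not through fourth moments, which is why we truncate via coercivity before estimating. The cross term $2X^\top Q(\tilde X-X^\eta)$ then needs $\E|X^\eta|^2$ bounded in terms of $M$, which follows from the linear well-posedness estimate of \cite[Theorem 4.4]{LQ_Volterra_AJMP}. A secondary difficulty is the singular kernel in the Gronwall step, which we handle with the resolvent bounds for $|K|^2$ in Appendix~\ref{appendix:volterra tools}.
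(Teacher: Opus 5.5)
Your overall route is the paper's: propagation of chaos on the lift, a cost comparison for the equilibrium profile, reduction to deviations with $\E\int_0^T|\eta_t|^2\de t\le M$ via coercivity, the $1/N$ weight of the deviator in the empirical mean, comparison with the frozen-mean problem, and optimality of $\hat\alpha$ for $J^{\mathfrak m}$. However, Step~3 fails as written, at the point you single out as the main obstacle.

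\textbf{The gap.} You assert pointwise-in-time bounds that depend on $\eta$ only through $M$:
\begin{itemize}
\item $\E|\overline{\widetilde X}{}^N_t-\mathfrak m_t|^2\le c(1+M)/N$ for each $t$;
\item a bound on $\E|X^\eta_t|^2$ in terms of $M$;
\item and, derived from these, $\sup_t\E|\widetilde X^{i,N}_t-X^\eta_t|^2\le c/N$.
\end{itemize}
When $K$ is singular and $F\neq0$, which is the new setting of the paper, a second moment at a fixed time is not controlled by $\E\int_0^T|\eta_t|^2\de t$. Take $B=D=\gamma=0$ and the deterministic deviation $\eta_s=(M/\eps)^{1/2}\mathbf 1_{[t_0-\eps,t_0]}(s)$, so that $\E\int_0^T|\eta_t|^2\de t=M$ for every $\eps$. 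Then
\[
\mathrm{Var}(X^\eta_{t_0})=F^2\tfrac{M}{\eps}\int_0^\eps K(u)^2\de u\ge F^2MK(\eps)^2\to\infty\quad\text{as }\eps\downarrow0 .
\]
Here the states are decoupled, so $\widetilde X^{i,N}=X^\eta$ is independent of the other players. Hence $\E|\overline{\widetilde X}{}^N_{t_0}-\mathfrak m_{t_0}|^2\ge F^2MK(\eps)^2/N^2$, which is unbounded for fixed $N$. So no constant uniform over deviations can appear in these pointwise bounds, and the $\eps_N$-Nash property needs exactly that uniformity. Citing \cite[Theorem 4.4]{LQ_Volterra_AJMP} does not help: that estimate controls moments through $\sup_t\E|\eta_t|^p$, which the cost does not bound.

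\textbf{The fix.} Since the costs are integrals over $[0,T]\times\Omega$, the time-integrated versions of these bounds suffice, and this is why the paper switches to them for the deviated system.
\begin{itemize}
\item Proposition~\ref{prop:usual machinery} and the convolutional Gronwall inequality give $\varphi(t)\le c\int_0^t(-\widehat r(t-s))(1+\E|\eta_s|^2)\de s$ for the relevant second moments $\varphi$.
\item Young's convolution inequality in $L^1(0,T)$ turns this into $\int_0^T\varphi(t)\de t\le c(1+M)$, with $c$ independent of $N$ and $\eta$ (Proposition~\ref{prop:a priori estimates deviated}).
\item The same mechanism gives $\int_0^T\E|\overline{\widetilde X}{}^N_t-\mathfrak m_t|^2\de t\le c/N$ and $\int_0^T\E|\widetilde X^{i,N}_t-X^\eta_t|^2\de t\le c/N$.
\item Cauchy--Schwarz in $\de t\otimes\de\Prob$ then yields the $c/\sqrt N$ comparison between $J^{i,N}(\boldsymbol\alpha^{-i},\eta)$ and $J^{\mathfrak m}(\eta)$.
\end{itemize}
Your coercivity reduction also rests on the integrated bound: the lower bound $J^{i,N}(\boldsymbol\alpha^{-i},\eta)\ge\frac\lambda2u-c$ needs $\int_0^T\E|\widetilde X^{i,N}_t|^2\de t\le c(1+u)$. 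With $\sup_t$ replaced by $\int_0^T\de t$ throughout Step~3, your argument becomes the paper's.

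\textbf{A minor point on Step~1.} The Gronwall argument must be run on $\E\norm{Y^{i,N}_t-\hat Y^i_t}_{L^1_\mu}^2$, because $\alpha^{i,N}-\hat\alpha^i$ is not a function of $X^{i,N}-\hat X^i$. Proposition~\ref{prop:usual machinery} does this with the kernel $\widehat K(t)=(1\vee(2et)^{-1/2})K(t)$, not $|K|^2$.
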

Theorem~\ref{thm:eps-nash} only requires the existence of a solution of the mean field game, guaranteed for instance by Theorem~\ref{thm:mfg-wellposed} or Theorem~\ref{thm:global F=0}, and not its uniqueness.
A key ingredient in the proof is a propagation of chaos result, which quantitatively couples the state of each player in the $N$-player game with its mean field counterpart, and shows that their distance vanishes as $N \to \infty$. This analysis, of independent interest, is developed in Subsection~\ref{ssec:convergence} (see in particular Proposition~\ref{prop:propagation chaos}).

\bigskip

\textbf{On constants. }Throughout the paper, $c$ denotes a strictly positive constant that depends only on the \emph{data of the problem}, namely the horizon $T$, the measure $\mu$, the coefficients of the dynamics and of the running cost, the sup-norms $\norm{g_0}_\infty, \norm{\beta}_\infty, \norm{\gamma}_\infty$, and the coercivity constant $\lambda$ of Assumption~\ref{ass:data}(iii). Within proofs, the value of $c$ may change from line to line. Any further dependence, such as on a moment exponent $p$, is indicated by writing $c=c(p)$, and dependence or independence with respect to the number of players $N$ is always stated explicitly.  In Section~\ref{sec:propagation}, where a solution $(\hat\alpha,\mathfrak{m})$ of the mean field game is fixed, $c$ may also depend on it.

\section{Mean field game resolution}
\label{sec:mfg-resolution}
By Definition~\ref{def:MFG-solution}, solving the mean field game amounts to finding a pair $(\hat{\alpha},\mathfrak{m})$ that satisfies the best response condition and the consistency condition. We address the two in turn. In Section~\ref{subsec:best response} we freeze the curve $m$ and solve the resulting linear-quadratic Volterra control problem faced by the representative player: relying on the Markovian lifting of the Volterra dynamics, we characterize its optimal control in feedback form through a system of Riccati equations. In Section~\ref{subsec:fixed point} we then impose the consistency condition, which reduces to a fixed point equation for the equilibrium mean $\mathfrak{m}$; we prove that this equation admits a unique solution by a contraction argument, under a suitable smallness condition. Together, the two steps yield the well-posedness stated in Theorem~\ref{thm:mfg-wellposed}. Finally, in Section~\ref{subsec:arbitrary horizons} we extend this result to arbitrary horizons when the control does not act on the diffusion coefficient.

\subsection{Best response}
\label{subsec:best response}

We now address the best response, that is, condition~(i) of Definition~\ref{def:MFG-solution}. We fix a curve $m \in \rmC([0,T];\real^d)$ and minimize the cost $J^m$ in \eqref{eq:MFG-cost} over $\alpha \in \mathcal{A}$, subject to the controlled dynamics \eqref{eq:MFG-SVE}. As observed in Remark~\ref{rmk:cost-coupling}, since $m$ is deterministic the additive contribution $m_t^\top Q m_t + 2 m_t^\top L$ does not affect the minimization, so that the representative player faces a linear-quadratic control problem for a stochastic Volterra equation of convolution type. This is precisely the class of problems solved by Abi Jaber, Miller and Pham \cite{LQ_Volterra_AJMP}, whose results we specialize to the present setting.

\smallskip

The key idea of their approach is a Markovian lifting of the Volterra dynamics. Writing $\mu[\varphi] := \int_0^\infty \varphi(\theta)\,\mu(\de\theta)$, one introduces the lifted state $Y^\alpha_t = (Y^\alpha_t(\theta))_{\theta > 0}$, related to the original state by the \emph{projection} $X^\alpha_t = g_0(t) + \mu[Y^\alpha_t]$, which solves
\begin{align*}
    Y_t^\alpha(\theta) &= \int_0^t e^{-\theta(t-s)}[\beta(s)+Bg_0(s)+B(\mu[Y_s^\alpha]-m_s)+C\alpha_s]\de s\\
    &\qquad+ \int_0^t e^{-\theta(t-s)}[\gamma(s) +Dg_0(s) + D(\mu[Y_s^\alpha]-m_s) + F\alpha_s]\de W_s.
\end{align*}
By \cite[Theorems~2.3 and~4.4]{LQ_Volterra_AJMP}, for every $\alpha\in\mathcal{A}$ the lifted state $Y^\alpha$ is well defined, has continuous sample paths in $L^1_\mu$, and satisfies
\begin{equation}
\label{eq:lifted moments}
    \sup_{t\in[0,T]}\E\big[\norm{Y^\alpha_t}_{L^1_\mu}^4\big]<\infty .
\end{equation}
The resolution of this control problem is governed by a system of Riccati equations. For $\theta,\tau \in (0,+\infty)$, consider
\begin{equation}
\label{eq:Riccati}
    \begin{dcases}
    \dot{\Gamma}_t(\theta,\tau) &= (\theta+\tau)\Gamma_t(\theta,\tau) - \mathcal{R}_1(\Gamma_t)(\theta,\tau), \quad \Gamma_T(\theta,\tau)=0,\\
    \dot{\Lambda}_t^m(\theta) &= \theta\Lambda_t^m(\theta) - \mathcal{R}_2(t,\Gamma_t,\Lambda_t^m,m)(\theta), \quad \Lambda^m_T(\theta)=0.
    \end{dcases}
\end{equation}
The right-hand sides of \eqref{eq:Riccati} are built from three auxiliary operators, which will also appear in the optimal feedback: a cross term $S$ coupling state and control, the effective control cost $\hat{R}$, and an affine term $h$ collecting the contributions of the exogenous coefficients. For every $G \in L^1(\mu \otimes \mu; \real^{d\times d})=:L^1_{\mu\otimes\mu}$ and $\ell \in L^1_\mu$, with the notation $\mu^{\otimes 2}[G]:= \int_0^\infty \int_0^\infty G(\theta,\tau)\mu(\de\theta)\mu(d\tau)$, we set
\begin{equation}
    \begin{aligned}
        S(G)(\tau) & := C^\top \mu[G(\cdot,\tau)] + F^\top \mu^{\otimes 2}[G] D, \\[0.2cm]
        \hat{R}(G) &:= R + F^\top \mu^{\otimes 2}[G]F, \\[0.2cm]
        h(t,G,\ell,m) & := C^\top \mu[\ell] + F^\top \mu^{\otimes 2}[G] (\gamma(t)+Dg_0(t)-Dm_t).
    \end{aligned}
\end{equation}
In terms of these, the nonlinear terms of the two Riccati equations are given by
\begin{equation}
    \begin{aligned}
        \mathcal{R}_1(G)(\theta,\tau) & := Q + D^\top \mu^{\otimes 2}[G]D + B^\top \mu[G(\cdot,\tau)] + \mu[G(\theta,\cdot)]B  - S(G)(\theta)^\top \hat{R}(G)^{-1} S(G)(\tau),
    \end{aligned}
\end{equation}
\begin{equation}
    \begin{aligned}
        \mathcal{R}_2(t,G,\ell,m)(\theta) & := L + Qg_0(t) + B^\top \mu[\ell] +  \mu[G(\theta,\cdot)](\beta(t)+Bg_0(t)-Bm_t) \\
        & \qquad + D^\top \mu^{\otimes 2}[G](\gamma(t)+Dg_0(t)-Dm_t)  - S(G)(\theta)^\top \hat{R}(G)^{-1}h(t,G,\ell,m).
    \end{aligned}
\end{equation}

To give a precise meaning to the solutions to the Riccati equations \eqref{eq:Riccati}, we give the following definitions:
\begin{definition}\label{def:gamma}
    Let $G \in L^\infty(\mu \otimes \mu; \real^{d\times d})=: L^\infty_{\mu\otimes\mu}$. We say that $G$ is symmetric if
    $$
    G(\theta,\tau) = G(\tau,\theta)^\top, \quad \mu\otimes\mu\text{-a.e.}
    $$
    and non-negative if
    $$
    \int_{\real^2_+} \phi(\theta)^\top G(\theta,\tau)\phi(\tau)\mu(\de\theta)\mu(d\tau) \geq 0, \quad \forall \phi \in L^1_\mu.
    $$
    We denote by $S^d_+(\mu\otimes \mu)$ the set of all symmetric non-negative $G \in L^\infty_{\mu\otimes\mu}$.
\end{definition}

\begin{definition}\label{def:sol_riccati}
    For a fixed $m \in \rmC([0,T];\real^d)$, a solution to \eqref{eq:Riccati} is a couple $(\Gamma,\Lambda^m) \in \rmC([0,T];L^1_{\mu \otimes\mu})\times \rmC([0,T];L^1_\mu)$ such that
    \begin{align}
        \Gamma_t(\theta,\tau) & = \int_t^T e^{-(\theta+\tau)(s-t)}\mathcal{R}_1(\Gamma_s)(\theta,\tau)\de s, \quad t \in [0,T], \mu\otimes\mu\text{-a.e.} \\
        \Lambda_t^m(\theta) & = \int_t^T e^{-\theta(s-t)}\mathcal{R}_2(s,\Gamma_s,\Lambda^m_s,m)(\theta)\de s, \quad t \in [0,T], \mu\text{-a.e.}
    \end{align}
    In particular, $\hat{R}(\Gamma_t)$ is invertible for every $t \in [0,T]$.
\end{definition}
The next proposition is a direct consequence of \cite[Theorem 3.4]{LQ_Volterra_AJMP}.
\begin{proposition}\label{prop:gamma}
    Suppose Assumption~\ref{ass:kernel} and Assumption~\ref{ass:data} hold. Then, for every $m \in \rmC([0,T];\real^d)$ there exists a unique couple $(\Gamma,\Lambda^m)$ solution in the sense of Definition \ref{def:sol_riccati} to \eqref{eq:Riccati}. Moreover, $\Gamma_t \in S^d_+(\mu\otimes\mu)$ for every $t \in [0,T]$ and there exists a positive constant $c_\Gamma > 0$ such that
    $$
    \int_0^\infty |\Gamma_t(\theta,\tau)|\mu(d\tau) \leq c_\Gamma, \quad t \in [0,T], \mu\text{-a.e.}
    $$
\end{proposition}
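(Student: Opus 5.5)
The plan is to reduce Proposition~\ref{prop:gamma} to \cite[Theorem 3.4]{LQ_Volterra_AJMP} by checking that the frozen-mean problem is an instance of the general linear-quadratic Volterra problem treated there. For fixed $m \in \rmC([0,T];\real^d)$, we rewrite \eqref{eq:MFG-SVE} with deterministic input curves
\[
\tilde g_0 = g_0, \qquad \tilde\beta(t) = \beta(t) - Bm_t, \qquad \tilde\gamma(t) = \gamma(t) - Dm_t .
\]
The dynamics then take the form $X_t = \tilde g_0(t) + \int_0^t K(t-s)[\tilde\beta(s) + BX_s + C\alpha_s]\de s + \int_0^t K(t-s)[\tilde\gamma(s)+DX_s+F\alpha_s]\de W_s$, which is exactly their setting. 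Since $m$ is continuous, it is bounded, so $\tilde\beta,\tilde\gamma$ are bounded and measurable; Assumption~\ref{ass:data}(ii) is therefore preserved. The cost \eqref{eq:MFG-cost} differs from theirs only by the deterministic constant $\int_0^T (m_t^\top Q m_t + 2m_t^\top L)\de t$. The running cost has $Q \in S^d_+$, linear term $L$, and $R \succeq \lambda I_q$. Assumption~\ref{ass:kernel} is precisely their kernel hypothesis: complete monotonicity, the Bernstein representation, and the integrability condition \eqref{hp:integrability on mu}.

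The second step is to substitute $\tilde\beta,\tilde\gamma$ into their Riccati system and recognize \eqref{eq:Riccati}. The quadratic equation for $\Gamma$ does not involve the affine data, so it coincides with theirs and is independent of $m$. In the linear equation for $\Lambda$, their terms of the form $\mu[G(\theta,\cdot)](\tilde\beta + B\tilde g_0)$ and $D^\top\mu^{\otimes2}[G](\tilde\gamma + D\tilde g_0)$ become exactly the $m$-dependent terms appearing in $\mathcal{R}_2$ and $h$. Likewise, $S$ and $\hat R$ match their operators. The notion of solution in Definition~\ref{def:sol_riccati} is their mild formulation. Their theorem then gives existence and uniqueness of $(\Gamma,\Lambda^m)$ in $\rmC([0,T];L^1_{\mu\otimes\mu})\times\rmC([0,T];L^1_\mu)$, together with $\Gamma_t \in S^d_+(\mu\otimes\mu)$. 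In particular, for every $t$,
\[
\hat R(\Gamma_t) = R + F^\top\mu^{\otimes2}[\Gamma_t]F \succeq \lambda I_q ,
\]
since $\mu^{\otimes 2}[\Gamma_t] \succeq 0$ by nonnegativity of $\Gamma_t$ applied to constant $\phi$. Hence $\hat R(\Gamma_t)$ is invertible.

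The main obstacle is the uniform bound $\int_0^\infty|\Gamma_t(\theta,\tau)|\mu(\de\tau)\le c_\Gamma$, if it is not stated verbatim in \cite{LQ_Volterra_AJMP}. If it must be derived, I would use the mild formula. First, $0 \le \mu^{\otimes2}[\Gamma_t] \le c$ follows from comparing with the value function at $\alpha=0$. Then a Grönwall argument on $\sup_\theta\int|\Gamma_t(\theta,\tau)|\mu(\de\tau)$ controls each term of $\mathcal{R}_1$. Using $e^{-(\theta+\tau)(s-t)}\le e^{-\tau(s-t)}$ and $\int_0^T\int e^{-\tau r}\mu(\de\tau)\de r = \norm{K}_{L^1(0,T)}<\infty$, the constant terms $Q$ and $D^\top\mu^{\otimes2}[\Gamma]D$ are integrable. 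The terms $B^\top\mu[\Gamma(\cdot,\tau)]$ and $\mu[\Gamma(\theta,\cdot)]B$ are linear in the quantity being estimated. The quadratic term is bounded via $|\hat R^{-1}|\le\lambda^{-1}$ and $|S(\Gamma)(\tau)| \le c(1+|\mu[\Gamma(\cdot,\tau)]|)$. This last term is the delicate one, since it is quadratic in the unknown. I would handle it with the monotonicity $\Gamma \le$ (value under zero control), which linearizes the estimate, and this is where I would rely on \cite{LQ_Volterra_AJMP}.

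Since all constants depend only on $\mu$, $T$, $B,C,D,F,Q,R$ and $\lambda$, and $\Gamma$ does not depend on $m$, the constant $c_\Gamma$ is independent of $m$.
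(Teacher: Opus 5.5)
Your proposal is correct and follows the paper's route: the paper gives no separate argument and obtains the proposition directly from \cite[Theorem~3.4]{LQ_Volterra_AJMP}, with the frozen mean entering only through the affine coefficients $\beta-Bm$ and $\gamma-Dm$ exactly as you set it up, so that $\Gamma$ does not depend on $m$. The uniform bound on $\int_0^\infty|\Gamma_t(\theta,\tau)|\mu(\de\tau)$ is itself part of the cited theorem, so your Gronwall fallback is not needed; the only small fix is in your invertibility check, since for infinite $\mu$ constant test functions are not in $L^1_\mu$, and you should instead take $\phi=v\mathbf 1_{[0,n]}$ and let $n\to\infty$ by dominated convergence, using $\Gamma_t\in L^1_{\mu\otimes\mu}$.
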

\begin{remark}
    Notice that the unique solution $(\Gamma,\Lambda^m)$ to \eqref{eq:Riccati} is such that for every $t \in [0,T]$, $\Gamma_t \in L^1_{\mu\otimes\mu} \cap L^\infty_{\mu\otimes\mu}$ and $\Lambda^m_t \in L^1_\mu\cap L^\infty_\mu$. Moreover, notice that the solution of the first Riccati equation $\Gamma_t$ does not depend on $m$, thus it is the same solution as in the non-mean field case.
\end{remark}

We can now state the resolution of the best response problem, which specializes the verification theorem of \cite[Theorem~3.7]{LQ_Volterra_AJMP} to the dynamics \eqref{eq:MFG-SVE}.
\begin{proposition}[Best response]
\label{prop:best-response}
    Suppose Assumptions~\ref{ass:kernel} and~\ref{ass:data} hold. Let $m \in \rmC([0,T];\real^d)$ and let $(\Gamma,\Lambda^m)$ be the unique solution of the Riccati system \eqref{eq:Riccati} provided by Proposition~\ref{prop:gamma}. Then the problem of minimizing $J^m$ over $\mathcal{A}$ admits a unique optimal control $\alpha^*$, given in feedback form in terms of the lifted state by
    \begin{equation}
    \label{eq:LQ-MFG feedback control}
        \alpha^*_t = -\hat{R}(\Gamma_t)^{-1} \left\{ h(t,\Gamma_t,\Lambda_t^m,m) + \int_0^\infty S(\Gamma_t)(\theta)Y^{\alpha^*}_t(\theta)\mu(\de\theta) \right\}.
    \end{equation}
\end{proposition}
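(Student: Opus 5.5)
The plan is to reduce the frozen-mean problem to the general linear-quadratic Volterra problem of \cite{LQ_Volterra_AJMP} and then invoke the verification theorem \cite[Theorem~3.7]{LQ_Volterra_AJMP}. Since $m$ is deterministic and continuous, the additive term $m_t^\top Q m_t + 2m_t^\top L$ in \eqref{eq:MFG-cost} is a constant independent of $\alpha$ (Remark~\ref{rmk:cost-coupling}), so it suffices to minimize $\E\int_0^T f(X^\alpha_t,\alpha_t)\de t$. Next we rewrite \eqref{eq:MFG-SVE} in the form treated there:
\[
X^\alpha_t = g_0(t) + \int_0^t K(t-s)\big[b_0(s) + BX^\alpha_s + C\alpha_s\big]\de s + \int_0^t K(t-s)\big[\sigma_0(s) + DX^\alpha_s + F\alpha_s\big]\de W_s,
\]
with deterministic affine inputs $b_0 := \beta - Bm$ and $\sigma_0 := \gamma - Dm$. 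These are bounded and measurable because $\beta,\gamma$ are bounded and $m$ is continuous. Substituting the projection $X^\alpha = g_0 + \mu[Y^\alpha]$ then produces exactly the lifted dynamics displayed above, with inhomogeneous terms $\beta + Bg_0 - Bm$ and $\gamma + Dg_0 - Dm$. The running cost expressed in the lifted state reads $\mu[Y]^\top Q\mu[Y] + 2\mu[Y]^\top(L + Qg_0(t)) + \alpha^\top R\alpha + \text{const}$, which is why $L + Qg_0(t)$ appears in $\mathcal{R}_2$.

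Having matched the data, the key step is to check the hypotheses of \cite[Theorem~3.7]{LQ_Volterra_AJMP}:
\begin{itemize}
\item complete monotonicity of $K$ together with the integrability \eqref{hp:integrability on mu} of $\mu$ (Assumption~\ref{ass:kernel});
\item boundedness of the affine inputs $g_0,\beta,\gamma,m$;
\item $Q\in S^d_+$;
\item coercivity $R\ge\lambda I_q$ (Assumption~\ref{ass:data}(iii)).
\end{itemize}
Under these, Proposition~\ref{prop:gamma} supplies the unique solution $(\Gamma,\Lambda^m)$, with $\Gamma_t\in S^d_+(\mu\otimes\mu)$ and the bound $c_\Gamma$. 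In particular $\hat R(\Gamma_t)\ge R\ge\lambda I_q$, because $F^\top\mu^{\otimes2}[\Gamma_t]F\succeq0$ by non-negativity of $\Gamma_t$ tested on constant functions. So the inverse in \eqref{eq:LQ-MFG feedback control} is well defined and bounded uniformly in $t$.

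Three further points need checking.
\begin{itemize}
\item \textbf{Admissibility.} The closed-loop equation obtained by inserting \eqref{eq:LQ-MFG feedback control} into the lifted dynamics must be well posed, and the resulting $\alpha^*$ must lie in $\mathcal{A}$. The feedback is affine in $Y$ with coefficients bounded uniformly in $t$: $|S(\Gamma_t)(\theta)|$ is bounded via $c_\Gamma$ and $\Gamma_t\in L^\infty$, and $h$ is bounded since $\Lambda^m_t\in L^1_\mu$ continuously in $t$. Hence the argument of \cite[Theorem~4.4]{LQ_Volterra_AJMP} gives a unique solution with \eqref{eq:lifted moments}, and therefore $\sup_t\E|\alpha^*_t|^4<\infty$.
\item \textbf{Optimality.} Apply It\^o's formula to the quadratic functional $\langle Y_t,\Gamma_t Y_t\rangle_\mu + 2\langle\Lambda^m_t,Y_t\rangle_\mu + \chi_t$, with $\chi$ a suitable deterministic term, and complete the square. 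This gives the cost identity
\[
J^m(\alpha) = J^m(\alpha^*) + \E\int_0^T (\alpha_t-\alpha^*(t,Y^\alpha_t))^\top\hat R(\Gamma_t)(\alpha_t-\alpha^*(t,Y^\alpha_t))\de t ,
\]
where $\alpha^*(t,\cdot)$ denotes the feedback map in \eqref{eq:LQ-MFG feedback control}.
\item \textbf{Uniqueness.} This follows from $\hat R\ge\lambda I_q$: equality forces $\alpha = \alpha^*(\cdot,Y^\alpha)$ $\de t\otimes\de\Prob$-a.e. Uniqueness of the closed-loop equation then gives $Y^\alpha = Y^{\alpha^*}$, and hence $\alpha = \alpha^*$.
\end{itemize}

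The main obstacle is the It\^o step. $Y$ is not a semimartingale in $L^1_\mu$, since the unbounded operator $-\theta$ acts on it, so the verification must go through the mild formulation. This is exactly the delicate part already handled in \cite{LQ_Volterra_AJMP}. I would therefore not redo it, and only verify carefully that the time-dependent affine terms $-Bm_t$ and $-Dm_t$ fit their assumptions on the deterministic inputs: measurable and bounded suffices, and continuity of $m$ guarantees it. This data matching is also what identifies $h$ and $\mathcal{R}_2$ as given.
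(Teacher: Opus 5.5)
Your proposal is correct. For optimality it follows the paper's route: after observing that the $m$-dependent part of the cost is constant and that $-Bm$, $-Dm$ enter as bounded deterministic inputs, both you and the paper invoke \cite[Theorem~3.7]{LQ_Volterra_AJMP}. You additionally make explicit the data matching that the paper leaves implicit, such as the origin of $L+Qg_0$ in $\mathcal R_2$.

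For uniqueness you take a genuinely different route. The paper argues by strict convexity. Since the state equation is linear, $\alpha\mapsto X^\alpha$ is affine, and $R\succeq\lambda I_q$ then makes $J^m$ strictly convex on the convex set $\mathcal{A}$. Two minimizers differing on a set of positive $\de t\otimes\de\Prob$-measure would therefore have a midpoint with strictly lower cost. This uses nothing from the verification theorem beyond the existence of one minimizer, and no property of the closed loop system.

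Your argument instead needs three ingredients:
\begin{itemize}
\item the completion-of-squares identity for every $\alpha\in\mathcal{A}$;
\item the bound $\hat R(\Gamma_t)\succeq\lambda I_q$;
\item pathwise uniqueness of the closed-loop lifted equation in the class of processes satisfying \eqref{eq:lifted moments}.
\end{itemize}
The identity is stronger than the optimality statement being quoted. You therefore have to extract it from the It\^o step in the proof of \cite[Theorem~3.7]{LQ_Volterra_AJMP}, not from its statement. In exchange you get the quantitative gap $J^m(\alpha)-J^m(\alpha^*)\geq\lambda\,\E\int_0^T|\alpha_t-\alpha^*(t,Y^\alpha_t)|^2\de t$, and you identify every minimizer directly as a solution of the closed-loop equation.

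One small technical point concerns non-negativity of $\Gamma_t$. It is defined by testing against $\phi\in L^1_\mu$, and constants are not in $L^1_\mu$ when $\mu$ is infinite, as happens for singular kernels. To get $F^\top\mu^{\otimes2}[\Gamma_t]F\succeq0$, test instead with $v\mathbf{1}_{[0,n]}$, which belongs to $L^1_\mu$ since $\mu$ is Radon. Then let $n\to\infty$ by dominated convergence, using $\Gamma_t\in L^1_{\mu\otimes\mu}$.
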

\begin{proof}
    The fact that $\alpha^*$ defined by \eqref{eq:LQ-MFG feedback control} is an optimal control follows by a direct application of \cite[Theorem~3.7]{LQ_Volterra_AJMP}. Regarding the uniqueness, one can straightforwardly verify that the cost functional $J^m:\mathcal{A} \to \real$ is strictly convex thanks to Assumption \ref{ass:data}(iii), namely for every $\vartheta \in (0,1)$ and $\alpha^1,\alpha^2 \in \mathcal{A}$ differing on a set of positive $\de t\otimes\de\Prob$-measure, it holds $J^m(\vartheta\alpha^1+(1-\vartheta)\alpha^2) < \vartheta J^m(\alpha^1)+(1-\vartheta)J^m(\alpha^2)$. Hence the minimizer $\alpha^*$ of $J^m$ obtained through \cite[Theorem~3.7]{LQ_Volterra_AJMP} is unique, up to $\de t\otimes\de\Prob$-null sets.
\end{proof}

\subsection{Fixed point}
\label{subsec:fixed point}
We can now turn to the consistency condition~(ii) of Definition~\ref{def:MFG-solution}, which closes the mean field game by requiring the frozen curve to coincide with the mean of the optimally controlled state. Fix $m \in \rmC([0,T];\real^d)$ and let $\alpha^*$ be the corresponding best response of Proposition~\ref{prop:best-response}, with optimally controlled lifted trajectory $Y^* := Y^{\alpha^*}$. Both $\alpha^*$ and $Y^*$ clearly depend on the curve $m$, through the Riccati solution $\Lambda^m$ and the terms involving $m$ in the dynamics; to keep the notation light, we do not display this dependence explicitly. Imposing $\E[X^{\alpha^*}_t] = m_t$ for every $t \in [0,T]$ and recalling that $X^{\alpha^*}_t = g_0(t) + \mu[Y^*_t]$, we substitute $\alpha^*$ into the lifted dynamics and take expectations. Since the stochastic integral has zero expectation, and since the bound \eqref{eq:lifted moments} allows us to exchange expectation and integrals, this yields
\begin{equation}\label{eq:fixed_point_frozen}
\begin{aligned}
    \E[Y^*_t(\theta) ] &= \int_0^t e^{-\theta(t-s)}\bigg\{ \beta(s) + Bg_0(s) - Bm_s + B\E[\mu[Y_s^*] ] \\
    &\qquad - C\hat{R}(\Gamma_s)^{-1}\left[ h(s,\Gamma_s,\Lambda^{m}_s,m)+\int_0^\infty S(\Gamma_s)(\tilde{\theta})\E[Y_s^*(\tilde{\theta}) ]\mu(d\tilde{\theta}) \right] \bigg\}ds.
\end{aligned}
\end{equation}

We look for a solution of the form $m = g_0 + \mu[M]$, namely $m_t = g_0(t)+\mu[M_t]$, for some $M \in \rmC([0,T];L^1_\mu)$; such a curve is continuous, since $|\mu[M_t]-\mu[M_s]| \leq \norm{M_t-M_s}_{L^1_\mu}$. For $M \in \rmC([0,T];L^1_\mu)$ we write $m^M := g_0+\mu[M]$ and $\norm{M}_\infty := \sup_{t\in[0,T]}\norm{M_t}_{L^1_\mu}$. Since the second Riccati equation in \eqref{eq:Riccati} is solved backward in time, $\Lambda^{m}_t$ depends on the whole restriction $(m_s)_{s\in[t,T]}$, and not only on $m_t$. Accordingly, the operator describing the mean of the optimal control acts on paths: for $M \in \rmC([0,T];L^1_\mu)$ and $t\in[0,T]$ we set
\begin{equation}
    \label{eq:definition of psi}
    \psi_t(M) := -C\hat{R}(\Gamma_t)^{-1}\left\{ h\big(t,\Gamma_t,\Lambda^{m^M}_t,m^M\big)+\int_0^\infty S(\Gamma_t)(\theta)M_t(\theta)\,\mu(\de\theta) \right\},
\end{equation}
where $\Lambda^{m^M}$ is the solution of the second Riccati equation in \eqref{eq:Riccati} associated with the curve $m^M$. Replacing $m$ by $m^M$ and $\E[Y^*]$ by $M$ in \eqref{eq:fixed_point_frozen}, the terms $B(g_0(s)-m^M_s)+B\mu[M_s]$ cancel, and we are led to the fixed point equation
\begin{equation}
\label{eq:ODE fixed point}
    M_t(\theta) = \int_0^t e^{-\theta(t-s)}\left\{ \beta(s) + \psi_s(M)\right\}\de s, \qquad t \in [0,T],\ \mu\text{-a.e.}
\end{equation}
Since $\psi_s(M)$ depends on the restriction of $M$ to $[s,T]$, \eqref{eq:ODE fixed point} is not a Volterra equation of forward type: it couples the forward evolution of $M$ with the backward evolution of $\Lambda^{m^M}$. This derivation is only heuristic, since it replaces the unknown mean $\E[Y^*]$ by $M$. The next lemma makes it rigorous in both directions: solutions of \eqref{eq:ODE fixed point} produce equilibria, and every equilibrium produces a solution of \eqref{eq:ODE fixed point}. 
\begin{lemma}
\label{lemma:equivalence fixed point}
    Suppose Assumptions~\ref{ass:kernel} and~\ref{ass:data} hold.
    \begin{enumerate}
        \item[(i)] If $M \in \rmC([0,T];L^1_\mu)$ solves \eqref{eq:ODE fixed point} and $\alpha^*$ is the best response to $m^M$ given by Proposition~\ref{prop:best-response}, then $\E[Y^{\alpha^*}_t]=M_t$, and hence $\E[X^{\alpha^*}_t] = m^M_t$, for every $t \in [0,T]$.
        \item[(ii)] If $(\hat\alpha,\mathfrak{m})$ is a solution of the mean field game, then $M_t := \E[Y^{\hat\alpha}_t]$ defines an element of $\rmC([0,T];L^1_\mu)$ which solves \eqref{eq:ODE fixed point}, and $\mathfrak{m} = m^M$.
    \end{enumerate}
\end{lemma}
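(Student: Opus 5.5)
For part (i), I fix $M$ solving \eqref{eq:ODE fixed point} and set $m:=m^M$, $\alpha^*$ the best response of Proposition~\ref{prop:best-response}, $Y^*:=Y^{\alpha^*}$ and $\bar Y_t:=\E[Y^*_t]$. Taking expectations in the lifted dynamics with the feedback \eqref{eq:LQ-MFG feedback control} substituted, as in the derivation of \eqref{eq:fixed_point_frozen}, gives a linear Volterra-type equation for $\bar Y$. The stochastic integral has zero mean by \eqref{eq:lifted moments} and the square-integrability of $e^{-\theta(t-\cdot)}$ times an $L^2$ integrand. Fubini, applied to $\mu[Y^*_s]$ and to $\int S(\Gamma_s)Y^*_s\,\de\mu$, is justified by $\sup_t\E\norm{Y^*_t}_{L^1_\mu}<\infty$ together with the uniform bound $|S(\Gamma_s)(\theta)|\le c$, which follows from Proposition~\ref{prop:gamma} (the $\mu$-a.e. bound $c_\Gamma$ and $\Gamma_s\in L^1_{\mu\otimes\mu}$). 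The key observation is that $\Lambda^m$ is now a fixed deterministic function, since $m$ is frozen, so the equation for $\bar Y$ reads
\[
\bar Y_t(\theta)=\int_0^t e^{-\theta(t-s)}\Big\{\beta(s)+B(g_0(s)-m_s)+B\mu[\bar Y_s]-C\hat R(\Gamma_s)^{-1}\big[h(s,\Gamma_s,\Lambda^m_s,m)+\mu[S(\Gamma_s)\bar Y_s]\big]\Big\}\de s .
\]
This is a forward linear equation in $\rmC([0,T];L^1_\mu)$ with bounded coefficients. I would check that $M$ satisfies the same equation. Indeed, $B(g_0-m)+B\mu[M]=0$ by the definition $m=m^M$, and $\psi_s(M)$ coincides with the $C\hat R^{-1}[\cdots]$ term evaluated at $M$ with the same $\Lambda^m$. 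Uniqueness for this forward linear equation then gives $\bar Y=M$. I would prove it by setting $\Delta:=\bar Y-M$ and using $\norm{e^{-\theta(t-s)}v}_{L^1_\mu}\le |v|\,\mu[e^{-\theta(t-s)}]=|v|K(t-s)$, which yields
\[
\norm{\Delta_t}_{L^1_\mu}\le c\int_0^tK(t-s)\norm{\Delta_s}_{L^1_\mu}\de s .
\]
A Volterra Gronwall lemma for $K\in L^1$ (from the appendix on convolution kernels, or by iterating the resolvent) then gives $\Delta=0$. Finally $\E[X^{\alpha^*}_t]=g_0(t)+\mu[M_t]=m^M_t$.

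For part (ii), given a solution $(\hat\alpha,\mathfrak m)$, uniqueness in Proposition~\ref{prop:best-response} shows that $\hat\alpha$ is the feedback \eqref{eq:LQ-MFG feedback control} with $m=\mathfrak m$. I set $M_t:=\E[Y^{\hat\alpha}_t]$. Bochner measurability and continuity of $t\mapsto M_t$ in $L^1_\mu$ follow from the continuity of the paths of $Y^{\hat\alpha}$ in $L^1_\mu$, uniform integrability via the fourth-moment bound \eqref{eq:lifted moments}, and dominated convergence, using $\norm{\E Z}_{L^1_\mu}\le\E\norm{Z}_{L^1_\mu}$. Consistency gives $\mathfrak m_t=\E[X^{\hat\alpha}_t]=g_0(t)+\mu[M_t]=m^M_t$, so $\Lambda^{\mathfrak m}=\Lambda^{m^M}$. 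Taking expectations as above yields \eqref{eq:fixed_point_frozen} with $\E[Y^*]=M$ and $m=m^M$; the $B$-terms cancel and the remainder is exactly $\psi_s(M)$, which is \eqref{eq:ODE fixed point}.

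The main obstacle is the rigorous justification in (i) of uniqueness for the linear mean equation in $L^1_\mu$ with a possibly singular kernel. Everything hinges on reducing the $L^1_\mu$ norm to a scalar Volterra inequality through $\mu[e^{-\theta r}]=K(r)$. The Fubini and zero-mean exchanges are routine given \eqref{eq:lifted moments}.
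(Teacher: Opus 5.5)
Your proposal is correct and follows essentially the same route as the paper. In (i) you show that $\E[Y^{\alpha^*}]$ and $M$ satisfy the same forward linear equation, since the $\Lambda^{m^M}$ and $B$-terms cancel, and you conclude via $\mu[e^{-\theta r}]=K(r)$ and the convolutional Gronwall inequality. In (ii) you substitute the consistency relation $\mathfrak m=m^M$ exactly as the paper does, and your extra justifications (zero-mean stochastic integral, Fubini, continuity of $t\mapsto\E[Y_t]$ by uniform integrability) only spell out what the paper takes from \eqref{eq:lifted moments}.
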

\begin{proof}
    Let $m \in \rmC([0,T];\real^d)$ and let $\alpha^*$ be the best response to $m$. Since $Y^{\alpha^*}$ has continuous sample paths in $L^1_\mu$ and satisfies \eqref{eq:lifted moments}, the map %
    $t\mapsto\E[Y^{\alpha^*}_t]$ belongs to $\rmC([0,T];L^1_\mu)$. Moreover, it satisfies \eqref{eq:fixed_point_frozen}.

    (i) Take $m=m^M$. Subtracting \eqref{eq:ODE fixed point} from that equation, the terms involving $\Lambda^{m}$ cancel, and so does $Bg_0(s)-Bm^M_s+B\mu[M_s]$. Hence $\Delta:=\E[Y^{\alpha^*}]-M$ satisfies
    $$
    \Delta_t(\theta) = \int_0^t e^{-\theta(t-s)}\Big\{B\mu[\Delta_s] - C\hat R(\Gamma_s)^{-1}\int_0^\infty S(\Gamma_s)(\tilde\theta)\Delta_s(\tilde\theta)\mu(\de\tilde\theta)\Big\}\de s .
    $$
    Integrating against $\mu$ and using Tonelli's theorem and Lemmas~\ref{lemma:hat_N_bdd} and~\ref{lemma:S_Gamma_bdd}, we get \[
    \norm{\Delta_t}_{L^1_\mu} \leq (|B|+c_Rc_S|C|)\int_0^t K(t-s)\norm{\Delta_s}_{L^1_\mu}\de s,\]
    and the convolutional Gronwall inequality gives $\Delta\equiv0$. Therefore $\E[X^{\alpha^*}_t]=g_0(t)+\mu[M_t]=m^M_t$.

    (ii) Take $m=\mathfrak{m}$ and $\alpha^*=\hat\alpha$, so that $M=\E[Y^{\hat\alpha}]\in\rmC([0,T];L^1_\mu)$. The consistency condition gives $g_0+\mu[M]=\E[X^{\hat\alpha}]=\mathfrak{m}$, that is $\mathfrak{m}=m^M$. Then $Bg_0(s)-B\mathfrak{m}_s+B\mu[M_s]=0$, and \eqref{eq:fixed_point_frozen} becomes \eqref{eq:ODE fixed point}.
\end{proof}
Solving \eqref{eq:ODE fixed point} is the fixed point step of the mean field game. We establish existence and uniqueness by a contraction argument on the whole interval $[0,T]$, based on the Lipschitz estimate for $\psi$ below. Its proof relies on a detailed analysis of the Riccati equations, the main ingredient being the Lipschitz continuity of the solution map $m \mapsto \Lambda^m$; these technical results are collected in Appendix~\ref{appendix:B}.

\begin{lemma}\label{lemma:psi_lip}
    Suppose Assumptions~\ref{ass:kernel} and~\ref{ass:data} hold. Then, for every $M^1,M^2 \in \rmC([0,T];L^1_\mu)$,
    \begin{equation}
        \sup_{t\in[0,T]}|\psi_t(M^1)-\psi_t(M^2)| \leq \Lip_\psi\, \norm{M^1-M^2}_{\infty}, \qquad \Lip_\psi := c_R|C|\big(\Lip_{h,m}+\Lip_{h,\ell}\,c_K+c_S\big),
    \end{equation}
    with the constants of Lemmas~\ref{lemma:hat_N_bdd}, \ref{lemma:S_Gamma_bdd}, \ref{lemma:h_lip} and~\ref{lemma:Lambda_lip}. In particular, $\Lip_\psi$ does not depend on $g_0$, $\beta$, $\gamma$ and $L$.
\end{lemma}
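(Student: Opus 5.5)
The estimate will follow by splitting $\psi_t(M^1)-\psi_t(M^2)$ according to the definition \eqref{eq:definition of psi}. The key structural fact is that the matrix $-C\hat R(\Gamma_t)^{-1}$ does not depend on $M$, since $\Gamma$ does not depend on $m$ (Proposition~\ref{prop:gamma}). Writing $m^i:=m^{M^i}$, we get
\[
\psi_t(M^1)-\psi_t(M^2) = -C\hat R(\Gamma_t)^{-1}\Big\{ h\big(t,\Gamma_t,\Lambda^{m^1}_t,m^1\big)-h\big(t,\Gamma_t,\Lambda^{m^2}_t,m^2\big) + \int_0^\infty S(\Gamma_t)(\theta)\big(M^1_t-M^2_t\big)(\theta)\,\mu(\de\theta)\Big\}.
\]
The prefactor is bounded in norm by $|C|\,c_R$ using Lemma~\ref{lemma:hat_N_bdd}, which bounds $|\hat R(\Gamma_t)^{-1}|$ uniformly in $t$. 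This bound relies on $\Gamma_t\in S^d_+(\mu\otimes\mu)$, so that $\hat R(\Gamma_t)\succeq R\succeq\lambda I_q$ and hence $|\hat R(\Gamma_t)^{-1}|\le\lambda^{-1}$. For the last term, Lemma~\ref{lemma:S_Gamma_bdd} gives $\sup_\theta|S(\Gamma_t)(\theta)|\le c_S$, which is essentially a consequence of the bound $\int|\Gamma_t(\theta,\tau)|\mu(\de\tau)\le c_\Gamma$ of Proposition~\ref{prop:gamma}. Therefore this term is at most $c_S\norm{M^1_t-M^2_t}_{L^1_\mu}\le c_S\norm{M^1-M^2}_\infty$.

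For the $h$-difference, I split it into the part where only $\ell$ varies and the part where only $m$ varies. From its definition, $h$ is affine in $(\ell,m)$ for fixed $G=\Gamma_t$, with $\ell$ entering through $C^\top\mu[\ell]$ and $m$ through $-F^\top\mu^{\otimes2}[\Gamma_t]Dm$. Lemma~\ref{lemma:h_lip} therefore yields
\[
|h(t,\Gamma_t,\ell^1,m^1)-h(t,\Gamma_t,\ell^2,m^2)|\le \Lip_{h,\ell}\norm{\ell^1-\ell^2}_{L^1_\mu}+\Lip_{h,m}|m^1_t-m^2_t|.
\]
Since $m^1_t-m^2_t=\mu[M^1_t-M^2_t]$, we have $|m^1_t-m^2_t|\le\norm{M^1-M^2}_\infty$, and the inhomogeneous terms $g_0,\gamma$ cancel. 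It then remains to control $\sup_t\norm{\Lambda^{m^1}_t-\Lambda^{m^2}_t}_{L^1_\mu}$ by $c_K\sup_t|m^1_t-m^2_t|\le c_K\norm{M^1-M^2}_\infty$, which is the content of Lemma~\ref{lemma:Lambda_lip}. Combining the three bounds gives exactly $\Lip_\psi=c_R|C|(\Lip_{h,m}+\Lip_{h,\ell}c_K+c_S)$.

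The main obstacle is the Lipschitz continuity of $m\mapsto\Lambda^m$ (Lemma~\ref{lemma:Lambda_lip}). Here I would subtract the mild formulations from Definition~\ref{def:sol_riccati}. The difference $\delta_t=\Lambda^{m^1}_t-\Lambda^{m^2}_t$ solves a \emph{linear} backward equation,
\[
\delta_t(\theta)=\int_t^T e^{-\theta(s-t)}\Big\{\big(B^\top-S(\Gamma_s)(\theta)^\top\hat R(\Gamma_s)^{-1}C^\top\big)\mu[\delta_s]+A_s(\theta)(m^1_s-m^2_s)\Big\}\de s,
\]
where $A_s(\theta)$ collects the bounded coefficients $-\mu[\Gamma_s(\theta,\cdot)]B-D^\top\mu^{\otimes2}[\Gamma_s]D+S^\top\hat R^{-1}F^\top\mu^{\otimes2}[\Gamma_s]D$. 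These coefficients are uniformly bounded in $\theta$ thanks to the $c_\Gamma$ bound. Taking absolute values and integrating in $\mu(\de\theta)$ with Tonelli converts $e^{-\theta(s-t)}$ into $K(s-t)$. The result is a backward convolution Gronwall inequality, $\norm{\delta_t}_{L^1_\mu}\le c\int_t^TK(s-t)(\norm{\delta_s}_{L^1_\mu}+|m^1_s-m^2_s|)\de s$, which the resolvent form of Gronwall from Appendix~\ref{appendix:volterra tools} closes. Finally, all the constants involved come from bounds on $\Gamma$, $R$ and $B,C,D,F$, so the claimed independence of $\Lip_\psi$ from $g_0,\beta,\gamma,L$ follows, since these data cancel in every difference.
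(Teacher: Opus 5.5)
Your proposal is correct and follows the paper's proof: you use the same splitting of $\psi_t(M^1)-\psi_t(M^2)$ into the $h$-difference and the $S(\Gamma_t)$-term, bounded through Lemmas~\ref{lemma:hat_N_bdd}, \ref{lemma:S_Gamma_bdd}, \ref{lemma:h_lip} and~\ref{lemma:Lambda_lip}, and your sketch of the latter (linear backward equation, Tonelli turning $e^{-\theta(s-t)}$ into $K(s-t)$, backward convolutional Gronwall) matches the paper's appendix argument. The only deviation is how you justify $c_R<\infty$: you use $\hat R(\Gamma_t)\succeq R\succeq\lambda I_q$, which is valid since $\mu^{\otimes2}[\Gamma_t]$ is symmetric positive semi-definite (test the non-negativity with $v\mathbf{1}_{[0,n]}\in L^1_\mu$ and let $n\to\infty$) and even gives $c_R\le\lambda^{-1}$, whereas the paper's Lemma~\ref{lemma:hat_N_bdd} relies only on the invertibility and continuity of $t\mapsto\hat R(\Gamma_t)$.
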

\begin{proof}
    Set $m^i := m^{M^i}$ and $\Lambda^i := \Lambda^{m^i}$, $i=1,2$. Since $m^1-m^2=\mu[M^1-M^2]$, we have $\norm{m^1-m^2}_\infty \leq \norm{M^1-M^2}_\infty$. By definition of $\psi$ and Lemmas~\ref{lemma:hat_N_bdd} and~\ref{lemma:S_Gamma_bdd},
    $$
    |\psi_t(M^1)-\psi_t(M^2)| \leq c_R|C|\Big\{\big|h(t,\Gamma_t,\Lambda^1_t,m^1)-h(t,\Gamma_t,\Lambda^2_t,m^2)\big| + c_S\norm{M^1_t-M^2_t}_{L^1_\mu}\Big\}.
    $$
    By Lemmas~\ref{lemma:h_lip} and~\ref{lemma:Lambda_lip},
    \begin{align*}
    \big|h(t,\Gamma_t,\Lambda^1_t,m^1)-h(t,\Gamma_t,\Lambda^2_t,m^2)\big| &\leq \Lip_{h,m}\norm{m^1-m^2}_\infty+\Lip_{h,\ell}\norm{\Lambda^1_t-\Lambda^2_t}_{L^1_\mu} \\&
    \leq \big(\Lip_{h,m}+\Lip_{h,\ell}\,c_K\big)\norm{m^1-m^2}_\infty .
    \end{align*}
    Combining the two estimates with $\norm{m^1-m^2}_\infty\leq\norm{M^1-M^2}_\infty$ and $\norm{M^1_t-M^2_t}_{L^1_\mu}\leq\norm{M^1-M^2}_\infty$ gives the claim.
\end{proof}

We now have all the ingredients to establish the well-posedness of the fixed point equation.

\begin{theorem}[Well-posedness of the fixed point equation]
\label{thm:fixed point}
    Suppose Assumptions~\ref{ass:kernel} and~\ref{ass:data} hold, let $\Lip_\psi$ be the constant of Lemma~\ref{lemma:psi_lip}, and set
    \begin{equation}
    \label{eq:smallness}
        \rho_T := \Lip_\psi\,\norm{K}_{L^1(0,T)}.
    \end{equation}
    If $\rho_T<1$, there exists a unique solution $\mathfrak{M} \in \rmC([0,T];L^1_\mu)$ to the fixed point equation \eqref{eq:ODE fixed point}.
\end{theorem}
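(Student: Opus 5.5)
We will apply the Banach fixed point theorem on the complete metric space $\rmC([0,T];L^1_\mu)$, endowed with the norm $\norm{M}_\infty=\sup_{t\in[0,T]}\norm{M_t}_{L^1_\mu}$. Define the map
\[
\Phi(M)_t(\theta) := \int_0^t e^{-\theta(t-s)}\big\{\beta(s)+\psi_s(M)\big\}\de s ,
\]
so that solutions of \eqref{eq:ODE fixed point} are exactly the fixed points of $\Phi$. The proof has two parts: show that $\Phi$ maps $\rmC([0,T];L^1_\mu)$ into itself, and show that it is a contraction with constant $\rho_T$.

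\textbf{Well-definedness.} Fix $M \in \rmC([0,T];L^1_\mu)$. The first task is to check that $s\mapsto\psi_s(M)$ is measurable and bounded on $[0,T]$.

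\begin{itemize}
\item \emph{Boundedness.} Apply Lemma~\ref{lemma:psi_lip} with $M^2=0$; this gives $\sup_s|\psi_s(M)|\le \sup_s|\psi_s(0)|+\Lip_\psi\norm{M}_\infty$. The term $\psi_s(0)$ is bounded by the bounds of Lemmas~\ref{lemma:hat_N_bdd} and~\ref{lemma:S_Gamma_bdd}, together with boundedness of $\Lambda^{g_0}$ in $L^1_\mu$ and of $g_0,\gamma$.
\item \emph{Measurability.} This follows from continuity in time of $\Gamma$ and $\Lambda^{m^M}$ (Definition~\ref{def:sol_riccati}) and of $M$, together with measurability of $\gamma$.
\end{itemize}

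With $f_s:=\beta(s)+\psi_s(M)$ bounded by some $c_f$, Tonelli's theorem gives
\[
\norm{\Phi(M)_t}_{L^1_\mu}\le c_f\int_0^t K(t-s)\de s\le c_f\norm{K}_{L^1(0,T)}<\infty .
\]
For continuity in $t$, take $t'>t$ and split
\[
\Phi(M)_{t'}-\Phi(M)_t=\int_t^{t'}e^{-\theta(t'-s)}f_s\de s+\int_0^t\big(e^{-\theta(t'-s)}-e^{-\theta(t-s)}\big)f_s\de s .
\]
The $L^1_\mu$-norm of the first term is at most $c_f\int_0^{t'-t}K$, which vanishes as $t'\to t$. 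The $L^1_\mu$-norm of the second is at most $c_f\int_0^\infty \theta^{-1}(1-e^{-\theta t})(1-e^{-\theta(t'-t)})\mu(\de\theta)$. This tends to zero by dominated convergence: the integrand is dominated by $c(1\wedge\theta^{-1})$, which is $\mu$-integrable by \eqref{hp:integrability on mu}. So $\Phi(M)\in\rmC([0,T];L^1_\mu)$. These are classical properties of the lift; I would defer them to Appendix~\ref{appendix:volterra tools} if they are already stated there.

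\textbf{Contraction.} For $M^1,M^2$, the same Tonelli computation combined with Lemma~\ref{lemma:psi_lip} gives
\[
\norm{\Phi(M^1)_t-\Phi(M^2)_t}_{L^1_\mu}\le\int_0^t K(t-s)|\psi_s(M^1)-\psi_s(M^2)|\de s\le \Lip_\psi\norm{K}_{L^1(0,T)}\norm{M^1-M^2}_\infty .
\]
Taking the supremum over $t$ gives $\norm{\Phi(M^1)-\Phi(M^2)}_\infty\le\rho_T\norm{M^1-M^2}_\infty$. Since $\rho_T<1$, Banach's theorem yields a unique fixed point $\mathfrak M$.

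Note that no weighted-norm trick is available to remove the smallness condition. The reason is that $\psi_s(M)$ depends on $M$ on the whole interval $[s,T]$ through $\Lambda^{m^M}$, so only the global supremum norm can be used. This is why the smallness condition enters.

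The main obstacle is already absorbed in Lemma~\ref{lemma:psi_lip}, namely the Lipschitz dependence of $\Lambda^m$ on $m$. What remains here is the routine but careful verification that $\Phi(M)$ is continuous in $L^1_\mu$ for a possibly singular measure $\mu$. That step is where Assumption~\ref{ass:kernel}(ii) must be used.
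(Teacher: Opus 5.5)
Your proposal is correct and follows the paper's proof essentially step by step. Both use Banach's fixed point theorem on $\rmC([0,T];L^1_\mu)$ with the sup norm, bound $\psi(M)$ via Lemma~\ref{lemma:psi_lip} with $M^2=0$, and get the contraction with constant $\rho_T$ via Tonelli. The only difference is the time-continuity check: you use dominated convergence with dominating function $T\wedge\theta^{-1}$, while the paper computes the same quantity through the monotonicity of $K$, giving the explicit modulus $2\int_0^{t-u}K(r)\,\de r$. Both arguments need only $K\in L^1(0,T)$, so that step uses nothing beyond what the contraction step already requires.
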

\begin{proof}
    We apply the Banach fixed point theorem to the map
    $$
    \mathcal{T}(M)_t(\theta) := \int_0^t e^{-\theta(t-s)}\big[\beta(s)+\psi_s(M)\big]\de s, \qquad M \in \rmC([0,T];L^1_\mu),
    $$
    on $\rmC([0,T];L^1_\mu)$ endowed with the norm $\norm{\cdot}_\infty$.

    \medskip
    \emph{Step 1: $\mathcal{T}$ maps $\rmC([0,T];L^1_\mu)$ into itself.} Fix $M$ and set $\Phi_s := \beta(s)+\psi_s(M)$. By Lemma~\ref{lemma:psi_lip}, $|\psi_s(M)|\leq|\psi_s(0)|+\Lip_\psi\norm{M}_\infty$, where $\psi_s(0)=-C\hat R(\Gamma_s)^{-1}\big\{C^\top\mu[\Lambda^{g_0}_s]+F^\top\mu^{\otimes2}[\Gamma_s]\gamma(s)\big\}$ is bounded on $[0,T]$, since $\Lambda^{g_0}\in\rmC([0,T];L^1_\mu)$ by Proposition~\ref{prop:gamma}. Hence $\norm{\Phi}_\infty := \sup_{s\in[0,T]}|\Phi_s|<\infty$. By Tonelli's theorem and the representation $K(r)=\int_0^\infty e^{-\theta r}\mu(\de\theta)$, for every $t \in [0,T]$,
    $$
    \norm{\mathcal{T}(M)_t}_{L^1_\mu} \leq \int_0^t K(t-s)|\Phi_s|\de s \leq \norm{\Phi}_\infty\norm{K}_{L^1(0,T)}.
    $$
    For $0 \leq u \leq t \leq T$, using $e^{-\theta(t-s)}=e^{-\theta(t-u)}e^{-\theta(u-s)}$ for $s \leq u$,
    $$
    \mathcal{T}(M)_t(\theta)-\mathcal{T}(M)_u(\theta) = \int_u^t e^{-\theta(t-s)}\Phi_s\de s - \big(1-e^{-\theta(t-u)}\big)\int_0^u e^{-\theta(u-s)}\Phi_s\de s .
    $$
    Since $\int_0^\infty e^{-\theta(u-s)}\big(1-e^{-\theta(t-u)}\big)\mu(\de\theta)=K(u-s)-K(t-s)\geq 0$ by the monotonicity of $K$, Tonelli's theorem gives
    $$
    \norm{\mathcal{T}(M)_t-\mathcal{T}(M)_u}_{L^1_\mu} \leq \norm{\Phi}_\infty\left(\int_0^{t-u}K(r)\de r + \int_0^u K(r)\de r - \int_{t-u}^t K(r)\de r\right) \leq 2\norm{\Phi}_\infty\int_0^{t-u}K(r)\de r,
    $$
    which vanishes as $t-u \to 0$, since $K \in L^1(0,T)$. Hence $\mathcal{T}(M) \in \rmC([0,T];L^1_\mu)$. 

    \medskip
    \emph{Step 2: contraction.} Fix $M^1,M^2 \in \rmC([0,T];L^1_\mu)$. By Lemma~\ref{lemma:psi_lip},
    $$
    |\mathcal{T}(M^1)_t(\theta)-\mathcal{T}(M^2)_t(\theta)| \leq \int_0^t e^{-\theta(t-s)}|\psi_s(M^1)-\psi_s(M^2)|\de s \leq \Lip_\psi\norm{M^1-M^2}_\infty\int_0^t e^{-\theta(t-s)}\de s .
    $$
    Integrating against $\mu$ and applying Tonelli's theorem,
    $$
    \norm{\mathcal{T}(M^1)-\mathcal{T}(M^2)}_\infty \leq \Lip_\psi\norm{K}_{L^1(0,T)}\norm{M^1-M^2}_\infty = \rho_T\norm{M^1-M^2}_\infty .
    $$
    Since $\rho_T<1$, $\mathcal{T}$ is a contraction, and the Banach fixed point theorem yields the claim.
\end{proof}
\begin{remark}
\label{rmk:rho}
        The constant $\rho_T$ depends only on $\mu$, on the matrices $B,C,D,F,Q,R$ and on the horizon $T$, through the kernel $K$ and the solution $\Gamma$ of the first Riccati equation. In particular, it does not depend on $g_0$, $\beta$, $\gamma$ and $L$. Moreover, $T\mapsto\rho_T$ is non-decreasing and $\rho_T\to0$ as $T\downarrow0$, see Remark~\ref{rmk:horizon dependence}. Hence, setting $T_0:=\sup\{T>0:\rho_T<1\}\in(0,+\infty]$, the condition $\rho_T<1$ holds for every $T<T_0$, and Theorems~\ref{thm:fixed point} and~\ref{thm:mfg-wellposed} apply on every such horizon, whatever the choice of $g_0$, $\beta$, $\gamma$ and $L$.
\end{remark}
When $\rho_T<1$, we denote by $\mathfrak{m}_t = g_0(t) + \mu[\mathfrak{M}_t]$ the projection of the unique solution of the fixed point equation \eqref{eq:ODE fixed point}.

\medskip

Let $\hat\alpha$ be the best response to $\mathfrak{m}$ given by Proposition~\ref{prop:best-response}. By Lemma~\ref{lemma:equivalence fixed point}(i), the pair $(\hat\alpha,\mathfrak{m})$ solves the mean field game in the sense of Definition~\ref{def:MFG-solution}. It is the unique solution: if $(\tilde\alpha,\tilde{\mathfrak{m}})$ is another one, Lemma~\ref{lemma:equivalence fixed point}(ii) provides a solution $\tilde M$ of \eqref{eq:ODE fixed point} with $\tilde{\mathfrak{m}}=g_0+\mu[\tilde M]$; by Theorem~\ref{thm:fixed point}, $\tilde M=\mathfrak{M}$, hence $\tilde{\mathfrak{m}}=\mathfrak{m}$, and $\tilde\alpha=\hat\alpha$ by the uniqueness in Proposition~\ref{prop:best-response}. We have thus proved the first main result of the paper: 
\mfg*

\begin{remark}[Affine structure of the equilibrium controls]
\label{rmk:feedback}
    Let $(\hat\alpha,\mathfrak{m})$ be any solution of the mean field game. By Definition~\ref{def:MFG-solution}, $\hat\alpha$ is the best response to the continuous curve $\mathfrak{m}$, so that, by the uniqueness in Proposition~\ref{prop:best-response}, it is given, up to $\de t\otimes\de\Prob$-null sets, by the feedback \eqref{eq:LQ-MFG feedback control} with $m=\mathfrak{m}$:
    \[
    \hat\alpha_t = -\hat{R}(\Gamma_t)^{-1}\left\{ h(t,\Gamma_t,\Lambda^{\mathfrak{m}}_t,\mathfrak{m}) + \int_0^\infty S(\Gamma_t)(\theta)\,Y^{\hat\alpha}_t(\theta)\,\mu(\de\theta) \right\}.
    \]
    This is an affine feedback on the lifted state: the linear gain depends only on $\Gamma$, which does not depend on $\mathfrak{m}$, while the offset $h(t,\Gamma_t,\Lambda^{\mathfrak{m}}_t,\mathfrak{m})$ carries the dependence on the equilibrium mean. This representation holds for every solution, regardless of uniqueness and of the condition $\rho_T<1$. In general, the feedback is Markovian in the lifted state $Y^{\hat\alpha}_t$, but not in $X^{\hat\alpha}_t$, since $Y^{\hat\alpha}_t$ depends on the whole past of the dynamics.
\end{remark}

\subsection{Arbitrary horizons without control in the diffusion}
\label{subsec:arbitrary horizons}
The smallness condition $\rho_T<1$ of Theorem~\ref{thm:fixed point} restricts the horizon. In this subsection we remove it when the control does not act on the diffusion coefficient, that is $F=0$, and the interaction pulls each player towards the population mean.

\begin{theorem}[Arbitrary horizons, no control in the diffusion]
\label{thm:global F=0}
    Suppose Assumptions~\ref{ass:kernel} and~\ref{ass:data} hold, $F=0$, and $B=-P$ for some $P\in S^d_+$ commuting with $CR^{-1}C^\top$. Then, for every $T>0$, the mean field game admits a unique solution.
\end{theorem}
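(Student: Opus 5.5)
The plan is to exploit the fact that, for $F=0$, the fixed point equation \eqref{eq:ODE fixed point} is an \emph{affine} forward-backward system, and to replace the contraction of Theorem~\ref{thm:fixed point} by a monotonicity (energy) estimate combined with a continuation argument. By Lemma~\ref{lemma:equivalence fixed point} it suffices to prove that \eqref{eq:ODE fixed point} has a unique solution $\mathfrak M\in\rmC([0,T];L^1_\mu)$; uniqueness of the control then follows from Proposition~\ref{prop:best-response}, exactly as after Remark~\ref{rmk:rho}.

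\emph{Reduction to a linear forward-backward system.} When $F=0$ we have $\hat R(\Gamma_t)=R$, $S(\Gamma_t)(\tau)=C^\top\mu[\Gamma_t(\cdot,\tau)]$ and $h(t,\Gamma_t,\ell,m)=C^\top\mu[\ell]$, so $\psi_t(M)=-CR^{-1}C^\top\big(\mu[\Lambda^{m^M}_t]+\mu^{\otimes}[\Gamma_t M_t]\big)$, where $\mu^{\otimes}[\Gamma_tM_t]:=\int\!\!\int\Gamma_t(\theta,\tau)M_t(\tau)\mu(\de\theta)\mu(\de\tau)$. Moreover $\Lambda^m$ is affine in $m$, with $m$ entering $\mathcal R_2$ only through $-\mu[G(\theta,\cdot)]Bm_t=\mu[G(\theta,\cdot)]Pm_t$. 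I would introduce the ``adjoint'' quantity $p_t:=\mu[\Lambda_t]+\mu^{\otimes}[\Gamma_tM_t]\in\real^d$, so that $\psi_t=-CR^{-1}C^\top p_t$. For two solutions I would take differences $\delta M,\delta\Lambda,\delta p$, with $\delta m=\mu[\delta M]$. This gives a homogeneous linear system: $\delta M$ is forward with source $-CR^{-1}C^\top\delta p$, and $\delta\Lambda$ is backward with terminal value $0$ and source depending on $\delta\Lambda$, $\Gamma$ and $P\delta m$.

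\emph{Energy identity and uniqueness.} The key step is the computation of $\frac{\d}{\d t}\,\mu\big[\langle \delta\Lambda_t,\delta M_t\rangle\big]$, or rather of the pairing $\int\!\!\int \delta M_t(\theta)^\top\Gamma_t(\theta,\tau)\delta M_t(\tau)\mu\mu+2\mu[\delta\Lambda_t^\top\delta M_t]$, in mild form. The $\theta$-damping terms $\theta\,\Lambda$, $(\theta+\tau)\Gamma$ cancel against the $e^{-\theta(t-s)}$ transport of $\delta M$, and the Riccati equation for $\Gamma$ absorbs its quadratic term. I expect to obtain an identity of the form
\[
0=\int_0^T\Big\{\big|R^{-1/2}C^\top\delta p_s\big|^2+\delta m_s^\top P\,(\text{something in }\delta m,\delta p)\Big\}\de s ,
\]
since the boundary terms vanish ($\delta M_0=0$, $\delta\Lambda_T=0$, $\Gamma_T=0$). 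The cross terms between $P$ and $CR^{-1}C^\top$ are where the commutation hypothesis is used. Because $P$ and $N:=CR^{-1}C^\top$ are commuting positive semidefinite matrices, $PN$ is symmetric positive semidefinite, so the mixed contribution has a sign. The mean equation itself simplifies: since the $B(X-m)$ term vanishes in expectation, $\delta m_t=-\int_0^tK(t-s)N\delta p_s\,\de s$, and pairing with $P\delta m$ should produce a nonnegative quantity. Concluding $N\delta p\equiv0$ forces $\delta M\equiv0$ through the forward equation. This computation is the main obstacle. In particular it must be done in mild form in $L^1_\mu$, justifying Fubini with Proposition~\ref{prop:gamma} and the bounds of Appendix~\ref{appendix:B}, and it is exactly where I expect the structural hypotheses on $P$ to be needed (cf. Remark~\ref{rmk:global F=0}).

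\emph{Existence by continuation.} I would embed the problem into the family in which $Pm$ is replaced by $\lambda Pm$ in $\mathcal R_2$ and in the dynamics, for $\lambda\in[0,1]$. At $\lambda=0$ the backward equation does not involve $M$, so the system decouples and is solved forward explicitly. The energy identity, applied to a nonhomogeneous version of the difference system, should give an a priori estimate $\norm{\delta M}_\infty\le c\,\norm{\text{perturbation}}$ with $c$ independent of $\lambda$. Then a Peng--Wu type continuation argument applies: if the problem is solvable at $\lambda_0$, a contraction built on that solution operator solves it for $\lambda\in[\lambda_0,\lambda_0+\delta]$ with $\delta$ uniform in $\lambda_0$. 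Finitely many steps reach $\lambda=1$ and give existence for every $T>0$.
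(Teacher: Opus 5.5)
\textbf{The uniqueness part} has the paper's skeleton. Your $p_t$ is exactly the paper's $y_t=\mu[\widetilde\Lambda_t]$ (Lemma~\ref{lemma:homogeneous system}), and $\delta m=-K*(\mathcal G\,\delta p)$, with $\mathcal G$ your $N$, is its first equation. However, the step you defer is set up wrongly, and the sign step misses its key ingredient.

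\emph{The pairing.} Write $x=\delta m$, $y=\delta p$. Neither quantity you name closes in $(x,y)$. For example, differentiating $\langle\delta M_t,\Gamma_t\delta M_t\rangle+2\mu[\delta\Lambda_t^\top\delta M_t]$ leaves $x^\top D^\top\mu^{\otimes2}[\Gamma_t]Dx$, with the wrong sign, and $\mu[\delta\Lambda_t]^\top Bx$. The quantity to use pairs $\delta M$ with the full adjoint, with coefficient one:
$\mu[\widetilde\Lambda_t^\top\delta M_t]=\mu[\delta\Lambda_t^\top\delta M_t]+\langle\delta M_t,\Gamma_t\delta M_t\rangle$.
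Its derivative is $-(x^\top Qx+y^\top\mathcal Gy-x^\top Py)$, which gives $\langle Qx,x\rangle+\langle\mathcal Gy,y\rangle=\langle x,Py\rangle$. This is the paper's identity, including the $Q$-term you dropped; the paper obtains it instead from $y=K^\star(Qx+B^\top y)$ and the adjointness of $K*$ and $K^\star$.

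\emph{The sign.} The mixed term is $\langle x,Py\rangle=-\langle K*(N_0y),N_0y\rangle$ with $N_0=(P\mathcal G)^{1/2}$. Commutation only makes $P\mathcal G$ symmetric positive semi-definite. The nonpositivity comes from the positive type of completely monotone kernels, $\int_0^Tf^\top(K*f)\,\de t\ge0$ (Proposition~\ref{prop:positive definite}), which you never invoke. Pairing the mean equation with $P\delta m$, as you suggest, yields no sign.

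\textbf{The existence part} has a genuine gap.

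\emph{No decoupling at $\lambda=0$.} Even with $F=0$, $\mathcal R_2$ depends on $m$ through $-D^\top\mu^{\otimes2}[\Gamma_t]Dm_t$, not only through $\mu[\Gamma_t(\theta,\cdot)]Pm_t$. So at $\lambda=0$ the backward equation still involves $M$.

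\emph{No uniform a priori bound.} For $\lambda<1$ the cancellation of $B(g_0-m^M)+B\mu[M]$ in the mean equation is lost. Redoing the reduction, the homogeneous system becomes $x=-K*((1-\lambda)Px+\mathcal Gy)$, $y=K^\star(Qx-Py)$. Its energy identity acquires the extra term $(1-\lambda)\langle Qx,K*(Px)\rangle$, which has no sign unless $P$ and $Q$ commute. So the $\lambda$-uniform a priori estimate that drives your Peng--Wu step is unsupported.

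\emph{The missing idea} is that no a priori estimate is needed. By linearity, \eqref{eq:ODE fixed point} reads $M=c+\mathcal V\circ\psi^0(M)$. Here $\psi^0$ is bounded linear into bounded functions, and $\mathcal V$ is compact into $\rmC([0,T];L^1_\mu)$ (Lemma~\ref{lemma:V compact}). The Fredholm alternative (Proposition~\ref{prop:fredholm}) then gives existence and uniqueness for all data as soon as the homogeneous game has only the trivial solution. That is precisely what the corrected energy identity proves.
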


In dimension one the commutation condition is automatic, and we obtain the following.

\begin{corollary}
\label{cor:global d1}
    Suppose Assumptions~\ref{ass:kernel} and~\ref{ass:data} hold, $d=1$, $F=0$ and $B\leq0$. Then, for every $T>0$, the mean field game admits a unique solution.
\end{corollary}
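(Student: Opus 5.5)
This is a direct specialization of Theorem~\ref{thm:global F=0}, so the plan is to check its hypotheses in the case $d=1$ and invoke it.

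\emph{Step 1: matrices become scalars.} When $d=1$, the matrices $B$ and $D$ are real numbers. The matrix $C$ is a row vector in $\real^{1\times q}$, and $CR^{-1}C^\top$ is a $1\times 1$ matrix, i.e.\ a scalar. By Assumption~\ref{ass:data}(iii), $R$ is positive definite, hence invertible, so $CR^{-1}C^\top\ge 0$ is well defined.

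\emph{Step 2: sign and commutation conditions.} Set $P:=-B$. Since $B\le 0$, we have $P\ge 0$, that is, $P\in S^1_+$. Any two $1\times 1$ matrices commute, so $P$ commutes with $CR^{-1}C^\top$.

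\emph{Step 3: conclusion.} Together with $F=0$ and the standing Assumptions~\ref{ass:kernel} and~\ref{ass:data}, all hypotheses of Theorem~\ref{thm:global F=0} hold. That theorem therefore gives, for every $T>0$, existence and uniqueness of a solution of the mean field game in the sense of Definition~\ref{def:MFG-solution}.

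There is no substantive obstacle here. The only points to confirm are that ``$B\le0$'' in dimension one is exactly the condition $-B\in S^1_+$, and that $R^{-1}$ exists, which is guaranteed by the coercivity in Assumption~\ref{ass:data}(iii). All the analytical work, namely the monotonicity argument that replaces the smallness condition $\rho_T<1$, is contained in the proof of Theorem~\ref{thm:global F=0}, which we take as given.
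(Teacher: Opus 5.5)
Your proposal is correct and matches the paper's argument: in dimension one, $P:=-B\geq 0$ lies in $S^1_+$ and trivially commutes with the scalar $CR^{-1}C^\top$, so Theorem~\ref{thm:global F=0} applies directly. The paper gives exactly this one-line justification (``in dimension one the commutation condition is automatic'').
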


The proof exploits the linear structure of the problem and consists of three steps: (i) since the data enter the fixed point equation \eqref{eq:ODE fixed point} linearly, the Fredholm alternative reduces existence and uniqueness for arbitrary data to uniqueness for vanishing data; (ii) for vanishing data, the equilibrium mean solves, together with the mean of the adjoint variable of the representative player, a linear forward-backward system of Volterra equations; (iii) pairing the two equations of this system gives an energy identity, and the sign condition on $B$, combined with the positivity of completely monotone kernels, forces the equilibrium mean to vanish.

\medskip
We start with the reduction to vanishing data. We call \emph{homogeneous} the mean field game with $g_0\equiv\beta\equiv\gamma\equiv0$ and $L=0$, the other coefficients being unchanged. In this case, the mappings $h$ and $\mathcal{R}_2$ take the following form:
\begin{align*}
    h(t,G,\ell,m) & = C^\top \mu[\ell] + F^\top \mu^{\otimes 2}[G] (-Dm_t) \\
    \mathcal{R}_2(t,G,\ell,m)(\theta) & =  B^\top \mu[\ell] +  \mu[G(\theta,\cdot)](-Bm_t) + D^\top \mu^{\otimes 2}[G](-Dm_t)  - S(G)(\theta)^\top \hat{R}(G)^{-1}h(t,G,\ell,m).
\end{align*} 
The homogeneous game always admits the trivial solution $(\hat\alpha,\mathfrak m)=(0,0)$: for $m\equiv0$, the control $\alpha\equiv0$ produces $X^\alpha\equiv0$ and zero cost, while the cost is non-negative, so that $\alpha\equiv0$ is the best response to $m\equiv0$ and the consistency condition holds.
Note also that the solution $\Gamma$ of the first Riccati equation in \eqref{eq:Riccati} depends neither on the curve $m$ nor on $g_0,\beta,\gamma,L$, since $\mathcal R_1$ only involves the matrices of the problem. Hence $\Gamma$, and with it $\hat R(\Gamma_t)$, $S(\Gamma_t)$ and the constant $\Lip_\psi$ of Lemma~\ref{lemma:psi_lip}, is the same for the homogeneous game and for the original one. The following result holds for any $F$.

\begin{proposition}
\label{prop:fredholm}
    Suppose Assumptions~\ref{ass:kernel} and~\ref{ass:data} hold, and that the homogeneous mean field game admits no solution other than the trivial one. Then, for every choice of $g_0,\beta,\gamma,L$ satisfying Assumption~\ref{ass:data}, the mean field game admits a unique solution.
\end{proposition}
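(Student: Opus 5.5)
The plan is to exploit the affine structure of the fixed point equation \eqref{eq:ODE fixed point} and apply the Fredholm alternative to its linear part. Lemma~\ref{lemma:equivalence fixed point} identifies solutions of the mean field game with solutions $M\in\rmC([0,T];L^1_\mu)$ of \eqref{eq:ODE fixed point}; Proposition~\ref{prop:best-response} then recovers $\hat\alpha$ uniquely from $\mathfrak m=m^M$. It therefore suffices to show that \eqref{eq:ODE fixed point} has exactly one solution for every admissible choice of $g_0,\beta,\gamma,L$.

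\emph{Step 1: affine decomposition.} First I will check that $m\mapsto\Lambda^m$ is affine. The second Riccati equation in \eqref{eq:Riccati} is linear in $(\ell,m)$ up to a forcing term, and its Lipschitz solution map (Lemma~\ref{lemma:Lambda_lip}) gives uniqueness. Hence $\Lambda^{m}=\Lambda^{0,\mathrm{data}}+\mathcal L(m)$, where $\mathcal L$ is linear and is exactly the solution map of the homogeneous game. Since $h$ is affine in $(\ell,m)$ and $\Gamma$ does not depend on the data, $\psi(M)=\psi^{\mathrm{data}}+\psi^{\mathrm{lin}}(M)$. Here $\psi^{\mathrm{lin}}$ is linear, bounded (Lemma~\ref{lemma:psi_lip}) and coincides with the $\psi$ map of the homogeneous game. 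Writing $\mathcal J(\phi)_t(\theta):=\int_0^te^{-\theta(t-s)}\phi_s\de s$, equation \eqref{eq:ODE fixed point} reads
\[
M=\mathcal J(\beta+\psi^{\mathrm{data}})+\mathcal J\psi^{\mathrm{lin}}(M).
\]
By Lemma~\ref{lemma:equivalence fixed point} applied to the homogeneous game, the solutions of $M=\mathcal J\psi^{\mathrm{lin}}(M)$ are in bijection with homogeneous equilibria. The hypothesis therefore says exactly that $I-\mathcal J\psi^{\mathrm{lin}}$ is injective.

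\emph{Step 2: reduction to a finite-dimensional-valued unknown.} To get compactness I will change unknown to the $\real^d$-valued curve $v:=\psi^{\mathrm{lin}}(M)$. Since $M=\mathcal J(f+v)$ with $f$ a known forcing term, the equation becomes $v=\psi^{\mathrm{lin}}(\mathcal J f)+\mathcal K v$ with $\mathcal K:=\psi^{\mathrm{lin}}\circ\mathcal J$. I will pose this on $E:=L^2(0,T;\real^d)$ or on bounded measurable curves. The two formulations are equivalent: a solution $v$ gives a solution $M=\mathcal J(f+v)$, which is continuous by Step~1 of the proof of Theorem~\ref{thm:fixed point}, and conversely; injectivity transfers in the same way. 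Unfolding the definitions, $\mathcal K v$ consists of three pieces:
\begin{itemize}
\item[(a)] $t\mapsto\int_0^tK(t-s)v_s\de s$, through $m^{\mathcal J v}$;
\item[(b)] $t\mapsto\int_0^t k(t,s)v_s\de s$ with $k(t,s)=\int S(\Gamma_t)(\theta)e^{-\theta(t-s)}\mu(\de\theta)$, which satisfies $|k(t,s)|\le c\,K(t-s)$ by Lemma~\ref{lemma:S_Gamma_bdd};
\item[(c)] $\mu[\mathcal L(m^{\mathcal Jv})_t]$, obtained by composing the operator in (a) with the bounded linear map $\mathcal L$ followed by $\mu$.
\end{itemize}
All three are pre- or post-composed with bounded matrix-valued multipliers $C\hat R(\Gamma_t)^{-1}$ and similar factors (Lemma~\ref{lemma:hat_N_bdd}).

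\emph{Step 3: compactness, the main obstacle.} The Volterra operator $v\mapsto K*v$ with $K\in L^1(0,T)$ is compact on $L^2(0,T)$: approximate $K$ in $L^1$ by bounded kernels, which give Hilbert--Schmidt operators, and use Young's inequality to pass to the limit in operator norm. Piece (b) is dominated by the same kernel, and I would treat it by the same approximation argument, splitting $\mu$ into $\{\theta\le n\}$ and $\{\theta>n\}$. On the first part the kernel is bounded, and the tail is small in operator norm uniformly by \eqref{hp:integrability on mu}. For piece (c), compactness follows because a bounded operator composed with a compact one is compact.

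Then $\mathcal K$ is compact, $I-\mathcal K$ is injective, and the Fredholm alternative gives bijectivity of $I-\mathcal K$ on $E$. This yields a unique $v$ for every choice of data, hence a unique solution $M$ of \eqref{eq:ODE fixed point}, and the proposition follows via Lemma~\ref{lemma:equivalence fixed point}. I expect the real work to be in making (b) and the $\Lambda$-dependent term (c) rigorously compact, in particular in justifying that the multipliers $\hat R(\Gamma_t)^{-1}$ and $S(\Gamma_t)$ are measurable and bounded in $t$.
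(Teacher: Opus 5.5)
Your overall architecture is the paper's. It uses the affine splitting of $\psi$ coming from the linearity of the second Riccati equation, the Fredholm alternative for a compact perturbation of the identity, and injectivity obtained from the hypothesis through Lemma~\ref{lemma:equivalence fixed point}. Your change of unknown $v=\psi^{\mathrm{lin}}(M)$, passing from $I-\mathcal J\psi^{\mathrm{lin}}$ to $I-\psi^{\mathrm{lin}}\mathcal J$, is legitimate, and you check both directions correctly.

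The step that does not go through as written is Step~3(c) on $E=L^2(0,T;\real^d)$. The boundedness of $\mathcal L$ that you invoke, Lemma~\ref{lemma:Lambda_lip}, controls $\sup_t\norm{\Lambda^{0,m}_t}_{L^1_\mu}$ by $\norm{m}_\infty$. Knowing that $v\mapsto K*v$ is compact from $L^2$ into $L^2$ therefore does not give compactness of $v\mapsto\mu[\Lambda^{0,K*v}]$. For ``bounded after compact'', the compact factor must map compactly into the space on which the bounded factor is bounded, here $\rmC([0,T];\real^d)$ with the sup norm. A related point is that $\mathcal K$ is not yet defined on $L^2$ by what you cite. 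Step~1 of the proof of Theorem~\ref{thm:fixed point} gives $\mathcal Jv\in\rmC([0,T];L^1_\mu)$ only for bounded $v$. For $v\in L^2$, boundedness of $K*|v|$ needs $K\in L^2(0,T)$, not just $K\in L^1(0,T)$.

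Both points are fixed by an observation you did not use: $K\in L^2(0,T)$. Indeed $K\in L^{1/\zeta}(0,1)$ by Proposition~\ref{prop:mu_properties} with $\alpha=\zeta$, and $1/\zeta>4$. Then $|(K*v)_t|\le\norm{K}_{L^2(0,T)}\norm{v}_{L^2(0,T)}$. The continuity of $t\mapsto(\mathcal Jv)_t$ in $L^1_\mu$ and the equicontinuity of $\{K*v:\norm{v}_{L^2}\le 1\}$ follow from the continuity of translations in $L^2$. Arzel\`a--Ascoli then gives compactness of $v\mapsto K*v$ from $L^2$ into $\rmC([0,T];\real^d)$, which is what (c) needs. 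Pieces (a) and (b) are fine as you wrote them.

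A much simpler fix is to take $E=B_b([0,T];\real^d)$ with the sup norm. There $\mathcal J\colon B_b\to\rmC([0,T];L^1_\mu)$ is compact by Lemma~\ref{lemma:V compact}, and $\psi^{\mathrm{lin}}$ is bounded by Lemma~\ref{lemma:psi_lip}. So $\mathcal K=\psi^{\mathrm{lin}}\circ\mathcal J$ is compact at once, and the Hilbert--Schmidt analysis of Step~3 is unnecessary. This is the paper's proof, except that the paper applies the Fredholm alternative to $\mathcal V\circ\psi^0$ on $\rmC([0,T];L^1_\mu)$, with the two factors composed in the other order. The multipliers you worry about are harmless, since $t\mapsto\hat R(\Gamma_t)^{-1}$ is continuous by Lemma~\ref{lemma:hat_N_bdd}.
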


\begin{proof} The proof is divided into three steps.\\
    \emph{Step 1: reformulation as a compact perturbation of the identity.} Let us define $\mathcal V\colon B_b([0,T];\real^d)\to\rmC([0,T];L^1_\mu)$ by $(\mathcal V\varphi)_t(\theta):=\int_0^te^{-\theta(t-s)}\varphi_s\de s$, which is linear, and compact by Lemma~\ref{lemma:V compact}. Thus, \eqref{eq:ODE fixed point} reads $M=\mathcal V(\beta+\psi(M))$. For fixed $G$, the maps $h(t,G,\ell,m)$ and $\mathcal R_2(t,G,\ell,m)$ are linear in $(\ell,m,g_0,\beta,\gamma,L)$ jointly. Since the second Riccati equation in \eqref{eq:Riccati} is linear and its solution is unique by Proposition~\ref{prop:gamma}, $\Lambda^m$ depends linearly on $(m,g_0,\beta,\gamma,L)$. 
    As $m^M=g_0+\mu[M]$, and since $\hat R(\Gamma_t)$ and $S(\Gamma_t)$ only depend on $\Gamma$, we get $\psi(M)=\psi(0)+\psi^0(M)$, where
    \begin{align*}
        \psi_t(0) & = -C\hat{R}(\Gamma_t)^{-1}\big\{ C^\top\mu[\Lambda^{g_0}_t]+F^\top\mu^{\otimes 2}[\Gamma_t]\gamma(t) \big\}, \\
        \psi^0_t(M) &= -C\hat{R}(\Gamma_t)^{-1}\Big\{ C^\top\mu[\Lambda^{0,\mu[M]}_t]-F^\top\mu^{\otimes 2}[\Gamma_t]D\mu[M_t] + \int_0^\infty S(\Gamma_t)(\theta)M_t(\theta)\mu(\de\theta) \Big\},
    \end{align*}
    and $\Lambda^{0,m}$ denotes the solution of the second Riccati equation of the homogeneous game associated with the curve $m$, so that $\Lambda^{m^M}=\Lambda^{g_0}+\Lambda^{0,\mu[M]}$ by linearity. In particular, $\psi^0$ is the operator \eqref{eq:definition of psi} of the homogeneous game, and it is linear. Hence \eqref{eq:ODE fixed point} reads
    $$
    M=c+\mathcal AM,\qquad c:=\mathcal V\big(\beta+\psi(0)\big),\qquad \mathcal A:=\mathcal V\circ\psi^0 .
    $$
    By Lemma~\ref{lemma:psi_lip} applied to the homogeneous game, $\psi^0$ is a bounded linear map from $\rmC([0,T];L^1_\mu)$ to the space of bounded measurable functions with the sup norm, so that $\mathcal A=\mathcal V\circ\psi^0$ is compact on $\rmC([0,T];L^1_\mu)$. By the Fredholm alternative \cite[Theorem~6.6]{Brezis2011}, the equation $M=c+\mathcal AM$ has exactly one solution for every $c\in\rmC([0,T];L^1_\mu)$, provided that the homogeneous equation $M=\mathcal AM$ only has the trivial solution.

    \smallskip

    \emph{Step 2: $M=\mathcal AM$ implies $M=0$.} Let $M=\mathcal AM$, that is, $M$ solves \eqref{eq:ODE fixed point} for the homogeneous game, and let $\alpha^*$ be the best response to $\mu[M]$. By Lemma~\ref{lemma:equivalence fixed point}(i), $(\alpha^*,\mu[M])$ is a solution of the homogeneous game and $\E[Y^{\alpha^*}]=M$. By the assumption of the proposition, this solution is the trivial one, so that $\alpha^*\equiv0$. Since the data also vanish, $Y^{\alpha^*}\equiv0$, and therefore $M=\E[Y^{\alpha^*}]=0$.
    
    \smallskip
    
    \emph{Step 3: conclusion.} By Steps~1 and~2, \eqref{eq:ODE fixed point} has a unique solution $\mathfrak M$ for every choice of the data. The best response to $\mathfrak m:=g_0+\mu[\mathfrak M]$ provides a solution of the mean field game by Lemma~\ref{lemma:equivalence fixed point}(i), and any other solution coincides with it by Lemma~\ref{lemma:equivalence fixed point}(ii), the uniqueness of $\mathfrak M$ and Proposition~\ref{prop:best-response}.
\end{proof}

It remains to verify the assumption of Proposition~\ref{prop:fredholm}, namely that the homogeneous game only has the trivial solution. To this end, we fix an arbitrary solution of the homogeneous game, derive a closed system of equations for its mean, and then show, by an energy argument, that this mean vanishes.

From now on $F=0$, so that $\hat R(G)=R$, $S(G)(\tau)=C^\top\mu[G(\cdot,\tau)]$ and $h(t,G,\ell,m)=C^\top\mu[\ell]$. We set $\mathcal G:=CR^{-1}C^\top\in S^d_+$ and, for a bounded measurable $f\colon[0,T]\to\real^d$, we define the forward and backward convolutions
$$
(K*f)_t:=\int_0^tK(t-s)f_s\de s,\qquad (K^\star f)_t:=\int_t^TK(s-t)f_s\de s .
$$
By \eqref{eq:LQ-MFG feedback control} and Fubini's theorem, for a given curve $m$ the best response reads
$$
\alpha^*_t=-R^{-1}C^\top\int_0^\infty\Big(\Lambda^m_t(\theta)+\int_0^\infty\Gamma_t(\theta,\tau)Y_t(\tau)\mu(\de\tau)\Big)\mu(\de\theta),
$$
where the function of $\theta$ in brackets plays the role of the adjoint variable of the lifted control problem. The function $y$ in the next lemma is its mean, averaged against $\mu$. The lemma shows that the pair formed by the equilibrium mean and $y$ solves a linear forward-backward system in which $\Gamma$ no longer appears.

\begin{lemma}
\label{lemma:homogeneous system}
    Suppose Assumptions~\ref{ass:kernel} and~\ref{ass:data} hold and $F=0$. Let $(\hat\alpha,\mathfrak m)$ be a solution of the homogeneous mean field game, set $M_t:=\E[Y^{\hat\alpha}_t]$ and
    $$
    \widetilde\Lambda_t(\theta):=\Lambda^{\mathfrak m}_t(\theta)+\int_0^\infty\Gamma_t(\theta,\tau)M_t(\tau)\mu(\de\tau),\qquad y_t:=\mu[\widetilde\Lambda_t].
    $$
    Then $x:=\mathfrak m$ and $y$ are bounded measurable functions such that
    \begin{equation}
    \label{eq:homogeneous system}
        x=-K*(\mathcal Gy),\qquad y=K^\star\big(Qx+B^\top y\big).
    \end{equation}
\end{lemma}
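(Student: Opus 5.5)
Our plan is to derive the two equations in \eqref{eq:homogeneous system} separately: the forward one from the mean dynamics, the backward one from the Riccati equations.

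\emph{Forward equation.} With $F=0$ and vanishing data, Lemma~\ref{lemma:equivalence fixed point}(ii) shows that $M$ solves \eqref{eq:ODE fixed point} and that $\mathfrak m=\mu[M]$. In the homogeneous game with $F=0$ we have $\beta\equiv0$ and
\[
\psi_s(M)=-\mathcal{G}\Big(\mu[\Lambda^{\mathfrak m}_s]+\int_0^\infty\mu[\Gamma_s(\cdot,\tau)]M_s(\tau)\mu(\de\tau)\Big).
\]
By Fubini's theorem, justified by Proposition~\ref{prop:gamma} and $M_s\in L^1_\mu$, this equals $-\mathcal G\,\mu[\widetilde\Lambda_s]=-\mathcal G y_s$. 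Integrating \eqref{eq:ODE fixed point} against $\mu$ and applying Tonelli's theorem then gives $x_t=\mu[M_t]=-\int_0^tK(t-s)\mathcal Gy_s\de s$. Boundedness of $x$ and $y$ follows from $M\in\rmC([0,T];L^1_\mu)$, $\Lambda^{\mathfrak m}\in\rmC([0,T];L^1_\mu)$ and the bound $c_\Gamma$.

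\emph{Backward equation.} The plan is to show that $\widetilde\Lambda$ satisfies, in mild form,
\[
\widetilde\Lambda_t(\theta)=\int_t^Te^{-\theta(s-t)}\big(Qx_s+B^\top y_s\big)\de s,
\]
and then integrate against $\mu$ to obtain $y=K^\star(Qx+B^\top y)$.

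To get this mild equation, we differentiate $\widetilde\Lambda$ formally. For $\Lambda^{\mathfrak m}$ we have $\dot\Lambda=\theta\Lambda-\mathcal R_2$, where, with $F=0$ and zero data,
\[
\mathcal R_2(\theta)=B^\top\mu[\Lambda]-\mu[\Gamma(\theta,\cdot)]Bx-\mu[\Gamma(\theta,\cdot)]\mathcal G\mu[\Lambda].
\]
For $M$ we have $\dot M(\tau)=-\tau M(\tau)+\psi(M)=-\tau M(\tau)-\mathcal Gy$. For $\Gamma$ we use $\dot\Gamma=(\theta+\tau)\Gamma-\mathcal R_1(\Gamma)$, with
\[
\mathcal R_1=Q+B^\top\mu[\Gamma(\cdot,\tau)]+\mu[\Gamma(\theta,\cdot)]B-\mu[\Gamma(\theta,\cdot)]\mathcal G\mu[\Gamma(\cdot,\tau)].
\]
Now compute $\frac{\de}{\de t}\int\Gamma_t(\theta,\tau)M_t(\tau)\mu(\de\tau)$:
\begin{itemize}
\item the $\tau\Gamma M$ terms cancel against $-\tau M$;
\item the $\theta$ term gives $\theta\int\Gamma M$;
\item the term $-\int\mathcal R_1 M$ produces $-Q x-B^\top\mu^{\otimes2}[\Gamma M]-\mu[\Gamma(\theta,\cdot)]Bx+\mu[\Gamma(\theta,\cdot)]\mathcal G\mu^{\otimes2}[\Gamma M]$;
\item the control term gives $-\mu[\Gamma(\theta,\cdot)]\mathcal Gy$.
\end{itemize}
Adding the $\Lambda$ equation, the $\mu[\Gamma(\theta,\cdot)]Bx$ terms cancel, since $x=\mu[M]$ and $m=x$. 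The $\mathcal G$ terms combine into $\mu[\Gamma(\theta,\cdot)]\mathcal G(\mu[\Lambda]+\mu^{\otimes2}[\Gamma M])-\mu[\Gamma(\theta,\cdot)]\mathcal Gy$, which vanishes. What remains is
\[
\dot{\widetilde\Lambda}=\theta\widetilde\Lambda-Qx-B^\top y,\qquad \widetilde\Lambda_T=0.
\]
This is where $\Gamma$ drops out, as the statement of the lemma announces.

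\emph{Main obstacle.} The computation above is formal. $\Gamma$ and $\Lambda$ are only mild solutions in $L^1$ spaces, and $\theta$ is unbounded, so we cannot differentiate directly. We would make it rigorous in two steps:
\begin{itemize}
\item Multiply everything by the integrating factors $e^{-\theta t}$ and $e^{-(\theta+\tau)t}$, so that $e^{-\theta t}\Lambda_t$, $e^{-(\theta+\tau)t}\Gamma_t$ and $e^{\tau t}M_t$ are absolutely continuous in $t$ for a.e. $\theta,\tau$, with bounded derivatives. This follows from the mild formulations in Definition~\ref{def:sol_riccati} and \eqref{eq:ODE fixed point}.
\item Apply the product rule pointwise in $(\theta,\tau)$, integrate in $\tau$, and justify exchanging $\de t$ and $\mu(\de\tau)$ by Fubini. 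The required integrability comes from the bound $c_\Gamma$ of Proposition~\ref{prop:gamma}, $\Gamma_t\in L^\infty_{\mu\otimes\mu}$, and the continuity of $M$ in $L^1_\mu$.
\end{itemize}
Integrating the resulting identity for $e^{-\theta t}\widetilde\Lambda_t$ from $t$ to $T$ gives the mild form for $\widetilde\Lambda$, and Tonelli's theorem then turns it into $y=K^\star(Qx+B^\top y)$.
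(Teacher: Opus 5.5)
Your proposal is correct and follows essentially the same route as the paper: the forward equation comes from $\psi_t(M)=-\mathcal G y_t$ and Fubini, and the backward equation from differentiating $e^{-\theta(s-t)}\Gamma_s(\theta,\tau)M_s(\tau)$ pointwise, integrating over $[t,T]$ and in $\tau$, and adding the mild equation for $\Lambda^{\mathfrak m}$. One slip to fix: with $F=0$ the term $D^\top\mu^{\otimes2}[\Gamma_s]D$ in $\mathcal R_1$ and the term $-D^\top\mu^{\otimes2}[\Gamma_s]Dx_s$ in $\mathcal R_2$ do not vanish, since they come from the state in the diffusion rather than from the control; they cancel once $\int_0^\infty\mathcal R_1(\Gamma_s)(\theta,\tau)M_s(\tau)\mu(\de\tau)$ and $\mathcal R_2$ are added, so your conclusion $\dot{\widetilde\Lambda}=\theta\widetilde\Lambda-Qx-B^\top y$ stands.
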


\begin{proof}
    By Lemma~\ref{lemma:equivalence fixed point}(ii), $M\in\rmC([0,T];L^1_\mu)$, $\mathfrak m=\mu[M]$, and $M$ solves \eqref{eq:ODE fixed point} with $\beta\equiv0$. We write $g_t(\theta):=\mu[\Gamma_t(\theta,\cdot)]$. By the symmetry of $\Gamma_t$, $\mu[\Gamma_t(\cdot,\tau)]=g_t(\tau)^\top$, hence $S(\Gamma_t)(\theta)^\top=g_t(\theta)C$.

    \smallskip

    We first derive the first equation in \eqref{eq:homogeneous system}. By Fubini's theorem and the definition of $\widetilde\Lambda_t$, we have $$\int_0^\infty S(\Gamma_t)(\tau)M_t(\tau)\mu(\de\tau)=C^\top\mu[\widetilde\Lambda_t-\Lambda^{\mathfrak m}_t].$$ Since $h(t,\Gamma_t,\Lambda^{\mathfrak m}_t,\mathfrak m)=C^\top\mu[\Lambda^{\mathfrak m}_t]$, this gives $\psi_t(M)=-\mathcal Gy_t$, and \eqref{eq:ODE fixed point} gives $M_t(\theta)=-\int_0^te^{-\theta(t-s)}\mathcal Gy_s\de s$. Integrating against $\mu$ gives $x=-K*(\mathcal Gy)$. Note that $y$ is bounded, since $\Lambda^{\mathfrak m}\in\rmC([0,T];L^1_\mu)$ and $|\mu[\widetilde\Lambda_t-\Lambda^{\mathfrak m}_t]|\leq c_\Gamma\norm{M_t}_{L^1_\mu}$ by the symmetry of $\Gamma_t$ and Proposition~\ref{prop:gamma}.

    \smallskip

    We now derive the second equation, by differentiating the product $\Gamma_s(\theta,\tau)M_s(\tau)$. The map $s\mapsto\mathcal R_1(\Gamma_s)(\theta,\tau)$ is bounded, uniformly in $s$ and $\mu\otimes\mu$-a.e., by Proposition~\ref{prop:gamma} and Lemmas~\ref{lemma:hat_N_bdd} and~\ref{lemma:S_Gamma_bdd}. Hence, by Definition~\ref{def:sol_riccati}, for $\mu\otimes\mu$-a.e.\ $(\theta,\tau)$ the map $s\mapsto\Gamma_s(\theta,\tau)$ is absolutely continuous with derivative $(\theta+\tau)\Gamma_s(\theta,\tau)-\mathcal R_1(\Gamma_s)(\theta,\tau)$, while $s\mapsto M_s(\tau)$ is absolutely continuous with derivative $-\tau M_s(\tau)-\mathcal Gy_s$. For fixed $t$, the terms involving $\theta$ and $\tau$ cancel in
    $$
    \frac{\de}{\de s}\Big(e^{-\theta(s-t)}\Gamma_s(\theta,\tau)M_s(\tau)\Big)=-e^{-\theta(s-t)}\Big(\mathcal R_1(\Gamma_s)(\theta,\tau)M_s(\tau)+\Gamma_s(\theta,\tau)\mathcal Gy_s\Big).
    $$
    We integrate over $[t,T]$, using $\Gamma_T=0$, then in $\tau$ against $\mu$, which is allowed by Fubini's theorem and the bounds above, and we add the mild equation for $\Lambda^{\mathfrak m}$. This gives $\widetilde\Lambda_t(\theta)=\int_t^Te^{-\theta(s-t)}\Phi_s(\theta)\de s$, with
    $$
    \Phi_s(\theta):=\mathcal R_2(s,\Gamma_s,\Lambda^{\mathfrak m}_s,\mathfrak m)(\theta)+\int_0^\infty\mathcal R_1(\Gamma_s)(\theta,\tau)M_s(\tau)\mu(\de\tau)+g_s(\theta)\mathcal Gy_s .
    $$
    Since the data vanish and $\mathfrak m=x$, the definitions of $\mathcal R_1$ and $\mathcal R_2$ give
    \begin{align*}
        \int_0^\infty\mathcal R_1(\Gamma_s)(\theta,\tau)M_s(\tau)\mu(\de\tau)&=\big(Q+D^\top\mu^{\otimes2}[\Gamma_s]D\big)x_s+B^\top\big(y_s-\mu[\Lambda^{\mathfrak m}_s]\big)+g_s(\theta)Bx_s-g_s(\theta)\mathcal G\big(y_s-\mu[\Lambda^{\mathfrak m}_s]\big),\\
        \mathcal R_2(s,\Gamma_s,\Lambda^{\mathfrak m}_s,\mathfrak m)(\theta)&=B^\top\mu[\Lambda^{\mathfrak m}_s]-g_s(\theta)Bx_s-D^\top\mu^{\otimes2}[\Gamma_s]Dx_s-g_s(\theta)\mathcal G\mu[\Lambda^{\mathfrak m}_s].
    \end{align*}
    Adding up, all the terms involving $\Gamma_s$ and $\Lambda^{\mathfrak m}_s$ cancel, and $\Phi_s(\theta)=Qx_s+B^\top y_s$. Integrating $\widetilde\Lambda_t$ against $\mu$ gives the second equation in \eqref{eq:homogeneous system}.
\end{proof}

We now have all the ingredients to prove Theorem~\ref{thm:global F=0}, by means of an energy identity for \eqref{eq:homogeneous system}. To explain the idea, consider first the Markovian case $K\equiv1$. Then \eqref{eq:homogeneous system} is the system of ordinary differential equations
$$
\tfrac{\de x_t}{\de t}=-\mathcal Gy_t,\quad x_0=0,\qquad \tfrac{\de y_t}{\de t}=-Qx_t-B^\top y_t,\quad y_T=0 .
$$
Integrating $\frac{\de}{\de t}(x_t^\top y_t)$ over $[0,T]$ and using the boundary conditions and $B^\top=-P$, we get
$$
\int_0^T\big(x_t^\top Qx_t+y_t^\top\mathcal Gy_t\big)\de t=\int_0^Tx_t^\top Py_t\,\de t=-\frac12\Big|\int_0^TNy_t\,\de t\Big|^2 .
$$
Here $N:=(P\mathcal G)^{1/2}$, which is well defined since $P$ and $\mathcal G$ are commuting symmetric positive semi-definite matrices, and the last equality follows by inserting $x_t=-\int_0^t\mathcal Gy_s\de s$. The left-hand side is non-negative and the right-hand side is non-positive, so that both vanish; hence $\mathcal Gy\equiv0$ and $x\equiv0$, which in turn implies that the solution is trivial. For a general kernel the system is no longer differential, and the same identity is obtained in integrated form, using that $K^\star$ is the adjoint of $K*$ in $L^2(0,T;\real^d)$. The sign of the right-hand side then follows from the positivity of completely monotone kernels.

\begin{proof}[Proof of Theorem~\ref{thm:global F=0}]
    By Proposition~\ref{prop:fredholm}, it suffices to show that the homogeneous game has no solution other than the trivial one. Let $(\hat\alpha,\mathfrak m)$ be a solution of the homogeneous game, and let $x,y$ be as in Lemma~\ref{lemma:homogeneous system}. By Fubini's theorem, $K^\star$ is the adjoint of $K*$ in $L^2(0,T;\real^d)$, whose inner product we denote by $\langle\cdot,\cdot\rangle$. Pairing the first equation in \eqref{eq:homogeneous system} with $Qx$ and using the second one, we get
    $$
    \langle Qx,x\rangle=-\langle Qx,K*(\mathcal Gy)\rangle=-\langle K^\star(Qx),\mathcal Gy\rangle=-\langle y-K^\star(B^\top y),\mathcal Gy\rangle,
    $$
    that is, since $B^\top=-P$,
    $$
    \langle Qx,x\rangle+\langle\mathcal Gy,y\rangle=-\langle K^\star(Py),\mathcal Gy\rangle .
    $$
    The left-hand side is non-negative, while, with $N$ as above, the right-hand side equals $-\langle K^\star(Ny),Ny\rangle=-\langle Ny,K*(Ny)\rangle\leq0$ by Proposition~\ref{prop:positive definite}. Hence both sides vanish. In particular $\langle\mathcal Gy,y\rangle=0$, so that $\mathcal Gy=0$ since $\mathcal G$ is positive semi-definite, and the first equation in \eqref{eq:homogeneous system} gives $\mathfrak m=x=-K*(\mathcal Gy)=0$. Finally, $\hat\alpha$ is the best response to $\mathfrak m\equiv0$ in the homogeneous game, which is $\alpha\equiv0$ as observed before Proposition~\ref{prop:fredholm}; by the uniqueness in Proposition~\ref{prop:best-response}, $\hat\alpha\equiv0$. Hence $(\hat\alpha,\mathfrak m)=(0,0)$.
\end{proof}

\begin{remark}
\label{rmk:global F=0}
    Theorem~\ref{thm:global F=0} holds for every kernel satisfying Assumption~\ref{ass:kernel}, singular ones included, and it involves no smallness condition. Some condition on $B$ is needed in general: for $d\geq2$ one can construct explicit examples, already with an exponential kernel or with $K\equiv1$, in which, on a specific horizon, uniqueness fails and, for suitable data, existence fails as well. This reflects a known feature of linear-quadratic mean field games with an interaction in the drift, see \cite[Section~3.5]{CarmonaDelarue} and \cite{BensoussanSungYamYung2016}.
\end{remark}

\begin{remark}
\label{rmk:global F}
    The same strategy applies when $F\neq0$, at the price of additional technicalities, and we only sketch it. Proposition~\ref{prop:fredholm} holds for any $F$, and the computation of Lemma~\ref{lemma:homogeneous system} leads to the system
    $$
    x=-K*(\mathcal G y),\qquad y=K^\star\big(Qx+\widetilde B^\top y\big),
    $$
    where now $\mathcal G_t:=C\hat R(\Gamma_t)^{-1}C^\top$ and $\widetilde B_t^\top:=B^\top-D^\top\mu^{\otimes2}[\Gamma_t]F\hat R(\Gamma_t)^{-1}C^\top$ depend on time. The energy identity becomes $\langle Qx,x\rangle+\langle\mathcal Gy,y\rangle=\langle K^\star(\widetilde B^\top y),\mathcal Gy\rangle$, whose right-hand side no longer has a sign, even when $B=-\varrho I_d$. It can however be treated as a perturbation of the case $F=0$: using that $\hat R(\Gamma_t)-R\in S^q_+$ with $|\hat R(\Gamma_t)-R|\leq|F|^2\,|\mu^{\otimes2}[\Gamma_t]|$, one obtains uniqueness for the homogeneous game, hence well-posedness, as soon as $B=-\varrho I_d$ for some $\varrho\geq0$ and
    $$
    \frac{\norm{K}_{L^1(0,T)}\,\big(|R|+|F|^2\bar\gamma_T\big)\,\bar\gamma_T\,|F|\,\big(\varrho|F|+|C||D|\big)}{\lambda^2}<1,\qquad \bar\gamma_T:=\sup_{t\in[0,T]}\big|\mu^{\otimes2}[\Gamma_t]\big| .
    $$
    This condition is always satisfied when $F=0$, in agreement with Theorem~\ref{thm:global F=0}, and more generally when $F$ is small enough compared with the other coefficients.
\end{remark}

\section{Propagation of chaos and approximated Nash equilibrium}
\label{sec:propagation}

Throughout this section, we fix a solution $(\hat\alpha,\mathfrak{m})$ of the mean field game, whose control $\hat\alpha$ is in affine feedback form by Remark~\ref{rmk:feedback}, and we work in dimension $d=q=1$ to lighten the notation; the general case is identical up to keeping track of the matrix dimensions. We show that this feedback, applied by each player to its own lifted state, provides approximate equilibria for the $N$-player game. The argument rests on a propagation of chaos property: as $N \to \infty$, each player becomes asymptotically independent of the others and close to its mean field counterpart. Section~\ref{ssec:convergence} makes this precise and provides the rate of convergence, while Section~\ref{ssec:nash} uses it to establish Theorem~\ref{thm:eps-nash}.

\subsection{Propagation of chaos}\label{ssec:convergence}
In this subsection we establish the propagation of chaos for the $N$-player game: as the population grows, each player becomes close to an independent copy of the mean field dynamics. We first recast the coupled particle system as a single Volterra equation, which we lift to the Markovian space $L^1_\mu$, and we introduce the mean field replicas against which the players are compared.

Fix a sequence of one-dimensional independent Brownian motions $(W^i)_{i \geq 1}$ and for every $N \in \Nat$ consider the particle system described by the following SVEs, for every $1 \leq i \leq N$
\begin{equation}
\label{eq:particles SVE}
    \begin{aligned}
        X_t^{i,N} & = g_0(t) +  \int_0^t K(t-s)\left[\beta(s) + B   \left(X_s^{i,N} - \frac{1}{N} \sum_{j=1}^N  X^{j,N}_s  \right) + C\alpha_s^{i,N}\right]\de s\\[0.2cm]
        & \qquad + \int_0^t K(t-s)\left[\gamma(s) + D\left(X_s^{i,N} - \frac{1}{N} \sum_{j=1}^N  X^{j,N}_s \right) + F\alpha_s^{i,N}\right]\de W^i_s, \quad t \in [0,T].
    \end{aligned}
\end{equation}
Although each player is driven by its own noise, the empirical mean couples all the equations. It is therefore convenient to treat the whole population as a single Volterra process: we rewrite the former system as an SVE for the $N$-dimensional process $X:=(X^{i,N})_{1\leq i\leq N}$. Let us define the following $N \times N$ matrix
$$
 \pi = \mathbb{I} - \frac{1}{N} \mathbf{1}\mathbf{1}^\top,
$$
where $\mathbb{I}$ is the $N \times N$ identity matrix and $\mathbf{1}:=(1,\dots,1)^\top\in \real^N$. Setting $\alpha:=(\alpha^{i,N})_{1\leq i\leq N}$, we get
\begin{equation}
\label{eq:particles SVE vector}
    \begin{aligned}
        X_t & = g_0(t) \mathbf{1} + \int_0^t K(t-s)[ \beta(s)\mathbf{1} + B\pi X_s + C \alpha_s]\de s\\
        & \qquad + \int_0^t K(t-s)[ \gamma(s)\mathbb{I} + D\mathrm{diag}(\pi X_s) + F\mathrm{diag}( \alpha_s)]\de W_s
    \end{aligned}
\end{equation}
where $\mathrm{diag}$ is the diagonal operator and $W:=(W^{i})_{1\leq i\leq N}$. As in the previous sections, we can lift \eqref{eq:particles SVE vector} in the space $L^1_\mu$ thanks to the relation $X_t = g_0(t)\mathbf{1} + \mu[Y_t]$, where $Y$ satisfies, for every $t \in [0,T], \mu\text{-a.e.}$
\begin{equation}
\label{eq:particles vector lifted}
    \begin{aligned}
        Y_t(\theta) & = \int_0^t e^{-\theta(t-s)}[ \beta(s)\mathbf{1} + B\pi \mu[Y_s] + C \alpha_s]\de s\\
        & \qquad + \int_0^t e^{-\theta(t-s)}[ \gamma(s)\mathbb{I} + D\mathrm{diag}(\pi \mu[Y_s]) + F\mathrm{diag}( \alpha_s)]\de W_s. 
    \end{aligned}
\end{equation}
For a given admissible profile, the former equation is well-posed by \cite[Theorem~4.4]{LQ_Volterra_AJMP}; with the feedback controls $\alpha^{i,N}$ defined below, which are affine in the lifted states with bounded coefficients, it has Lipschitz coefficients and is well-posed by \cite[Theorem 4.1]{LQ_Volterra_AJMP}. So is \eqref{eq:particles SVE vector}. Moreover, the $i$-th component of $Y$ is the lifted process of $X^{i,N}$ and this justifies our following notation
\begin{equation}
\label{eq:particles lifted}
    \begin{aligned}
        Y^{i,N}_t(\theta) & = \int_0^t e^{-\theta(t-s)}\left[ \beta(s) + B \left( \mu[Y^{i,N}_s] - \frac{1}{N}\sum_{j=1}^N \mu[Y^{j,N}_s] \right) + C \alpha^{i,N}_s\right]\de s\\
        & \qquad + \int_0^t e^{-\theta(t-s)}\left[ \gamma(s) + D \left( \mu[Y^{i,N}_s] - \frac{1}{N}\sum_{j=1}^N \mu[Y^{j,N}_s] \right) + F \alpha^{i,N}_s\right]\de W_s^i. 
    \end{aligned}
\end{equation}
In all the previous equations, $\alpha^{i,N}$ is the equilibrium feedback of Remark~\ref{rmk:feedback} applied to the lifted state of player $i$, that is
$$
\alpha^{i,N}_t := -\hat{R}(\Gamma_t)^{-1} \left\{ h(t,\Gamma_t,\Lambda_t^\mathfrak{m},\mathfrak{m}) + \int_0^\infty S(\Gamma_t)(\theta)Y^{i,N}_t(\theta)\mu(\de\theta) \right\}.
$$
The propagation of chaos compares each player with an independent copy of the mean field dynamics, driven by the same noise. We therefore introduce the $N$ replicas
\begin{equation}
\label{eq: mean field replicas}
    \begin{aligned}
        Y^{i}_t(\theta) & = \int_0^t e^{-\theta(t-s)}\left[ \beta(s) + B \left( g_0(s)+\mu[Y^{i}_s] - \mathfrak{m}_s\right) + C \alpha^{i}_s\right]\de s\\
        & \qquad + \int_0^t e^{-\theta(t-s)}\left[ \gamma(s) + D \left( g_0(s)+\mu[Y^{i}_s] - \mathfrak{m}_s \right) + F \alpha^{i}_s\right]\de W_s^i,
    \end{aligned}
\end{equation}
where
\[
\alpha^{i}_t := -\hat{R}(\Gamma_t)^{-1} \left\{ h(t,\Gamma_t,\Lambda_t^\mathfrak{m},\mathfrak{m}) + \int_0^\infty S(\Gamma_t)(\theta)Y^{i}_t(\theta)\mu(\de\theta) \right\}.
\]
Each $Y^i$ is the optimally controlled lifted state of the mean field problem with frozen curve $\mathfrak{m}$, driven by $W^i$. Hence the processes $Y^i$ are i.i.d.\ and, by the consistency condition, $\E[\mu[Y^i_s]] = \mathfrak{m}_s - g_0(s)$, so that $g_0(s)+\mu[Y^{i}_s]-\mathfrak{m}_s = \mu[Y^{i}_s]-\E[\mu[Y^{i}_s]]$.
The convergence analysis relies on two auxiliary estimates, collected in Appendix~\ref{appendix:convergence tools}: a law of large numbers for empirical means of i.i.d.\ processes (Lemma~\ref{lemma:CLT}) and a moment estimate for lifted Volterra dynamics (Proposition~\ref{prop:usual machinery}).

Before turning to the propagation of chaos, we establish that the moments of the particle systems are bounded uniformly in the number of players. This a priori control is used repeatedly in the sequel. 

\begin{proposition}
\label{prop:a priori estimates}
    Suppose Assumptions~\ref{ass:kernel} and~\ref{ass:data} hold. Then, for every $p=2,3,4$, there exists a constant $c=c(p)>0$, independent of $N$, such that the following hold: for the equilibrium particles and their mean field replicas,
        $$
        \sup_{1\leq i\leq N}\sup_{t\in[0,T]}\E\left[\norm{Y^{i,N}_t}_{L^1_\mu}^p\right] \leq c
        \qquad\text{and}\qquad
        \sup_{t\in[0,T]}\E\left[\norm{Y^{i}_t}_{L^1_\mu}^p\right] \leq c.
        $$
    Moreover, the same estimates hold for the states $X^{i,N}$, $X^{i}$ in place of $Y^{i,N}$, $Y^{i}$ respectively, that is
    $$
    \sup_{1\leq i\leq N}\sup_{t\in[0,T]}\E\left[|X^{i,N}_t|^p\right] \leq c, \qquad
    \sup_{t\in[0,T]}\E\left[|X^{i}_t|^p\right] \leq c.
    $$

\end{proposition}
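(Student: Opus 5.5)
The plan is to reduce everything to a convolutional Gronwall inequality for $u^N(t):=\sup_{1\le i\le N}\E[\norm{Y^{i,N}_t}^p_{L^1_\mu}]$, with constants independent of $N$. The point is that the equilibrium feedback is affine in the lifted state with bounded coefficients, and the interaction term is a convex combination of the particles. By Lemmas~\ref{lemma:hat_N_bdd} and~\ref{lemma:S_Gamma_bdd}, $|\hat R(\Gamma_t)^{-1}|\le c_R$ and $\int|S(\Gamma_t)(\theta)||Y(\theta)|\mu(\de\theta)\le c_S\norm{Y}_{L^1_\mu}$. Moreover, $h(t,\Gamma_t,\Lambda^{\mathfrak m}_t,\mathfrak m)$ is bounded on $[0,T]$, since $\Lambda^{\mathfrak m}\in\rmC([0,T];L^1_\mu)$, $\mathfrak m$ is continuous and $\Gamma$ is bounded by Proposition~\ref{prop:gamma}. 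Hence
\[
|\alpha^{i,N}_t|\le c\big(1+\norm{Y^{i,N}_t}_{L^1_\mu}\big).
\]
For the interaction term, Jensen's inequality gives $\big|\frac1N\sum_j\mu[Y^{j,N}_s]\big|^p\le\frac1N\sum_j\norm{Y^{j,N}_s}^p_{L^1_\mu}$, so its $p$-th moment is bounded by $u^N(s)$, uniformly in $N$.

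Next I will apply the moment estimate for lifted Volterra dynamics, Proposition~\ref{prop:usual machinery}, to \eqref{eq:particles lifted}. It should give, for a lifted process with drift $b_s$ and diffusion $\sigma_s$,
\[
\E\norm{Y_t}^p_{L^1_\mu}\le c(p)\int_0^t k(t-s)\,\E\big[|b_s|^p+|\sigma_s|^p\big]\de s,
\]
with $k$ an integrable kernel built from $K$. In the fractional case $k$ is of the type $K^2$-like, handled via the $\zeta$-condition \eqref{hp:integrability on mu}. Plugging in $|b_s|+|\sigma_s|\le c(1+\norm{Y^{i,N}_s}_{L^1_\mu}+|\overline{\mu[Y^{N}_s]}|)$, then taking $\sup_i$, yields $u^N(t)\le c+c\int_0^t k(t-s)u^N(s)\de s$. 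The function $u^N$ is finite for each fixed $N$ by well-posedness (\cite[Theorem~4.4]{LQ_Volterra_AJMP} gives fourth moments). The convolutional Gronwall lemma of Appendix~\ref{appendix:volterra tools} then gives $u^N\le c$ with $c$ independent of $N$. The replicas $Y^i$ are treated identically and more simply: $\mathfrak m$ and $g_0$ are bounded, so the drift and diffusion are affine in $\norm{Y^i_s}$ alone; alternatively, \eqref{eq:lifted moments} already gives finiteness, and the same Gronwall argument quantifies the bound. The lower exponents $p=2,3$ follow from $p=4$ by Jensen's inequality, or by running the same argument.

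For the states, $X^{i,N}_t=g_0(t)+\mu[Y^{i,N}_t]$ and $|\mu[Y]|\le\norm{Y}_{L^1_\mu}$, so $|X^{i,N}_t|^p\le 2^{p-1}(\norm{g_0}^p_\infty+\norm{Y^{i,N}_t}^p_{L^1_\mu})$. The same holds for $X^i$.

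The main obstacle is the stochastic-integral part of the $L^1_\mu$-moment bound. The integrand $e^{-\theta(t-s)}\sigma_s$ must be controlled in $L^1_\mu$ inside the expectation, which requires Minkowski's integral inequality followed by BDG pointwise in $\theta$. This gives a bound by $\big(\int_0^t e^{-2\theta(t-s)}\E|\sigma_s|^p\ldots\big)^{1/2}$, and the resulting $\mu$-integral must be shown to yield a kernel $k$ integrable enough for Gronwall. Since this is exactly what Proposition~\ref{prop:usual machinery} is stated to provide, I will invoke it and only check that its hypotheses, namely linear growth with constants independent of $N$, hold.
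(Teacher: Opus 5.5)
Your proposal is correct and follows essentially the same route as the paper. The steps match: the affine bound $|\alpha^{i,N}_t|\le c(1+\norm{Y^{i,N}_t}_{L^1_\mu})$ from Lemmas~\ref{lemma:hat_N_bdd} and~\ref{lemma:S_Gamma_bdd}, Jensen for the empirical mean, Proposition~\ref{prop:usual machinery} applied to $\sup_i\E[\norm{Y^{i,N}_t}^p_{L^1_\mu}]$, the convolutional Gronwall inequality with $N$-independent constants, and the projection $X=g_0+\mu[Y]$ for the states.

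Two side remarks are slightly off, though you do not rely on either. First, the kernel delivered by Proposition~\ref{prop:usual machinery} is $\widehat K(t)=(1\vee(2et)^{-1/2})K(t)$, not a ``$K^2$-like'' one. Second, the paper takes finiteness of $u^N$ for the closed-loop system from the Lipschitz well-posedness result \cite[Theorem~4.1]{LQ_Volterra_AJMP}; Theorem~4.4 there is stated for a given admissible control.
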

\begin{proof}
    Fix $p\in\{2,3,4\}$. We prove the bound for $Y^{i,N}$ in detail; the other processes are treated by the same argument, and we only point out what changes.

    We apply Proposition~\ref{prop:usual machinery} to $Y^{i,N}$ with
    $$
    \Delta^1_s = \beta(s) + B\Big(\mu[Y^{i,N}_s] - \tfrac{1}{N}\textstyle\sum_{j=1}^N \mu[Y^{j,N}_s]\Big) + C\alpha^{i,N}_s, \quad
    \Delta^2_s = \gamma(s) + D\Big(\mu[Y^{i,N}_s] - \tfrac{1}{N}\textstyle\sum_{j=1}^N \mu[Y^{j,N}_s]\Big) + F\alpha^{i,N}_s.
    $$
    The optimal control is affine in the lifted state: by Lemma~\ref{lemma:hat_N_bdd} and Lemma~\ref{lemma:S_Gamma_bdd},
    $$
    |\alpha^{i,N}_s| \leq c_R\Big(|h(s,\Gamma_s,\Lambda^{\mathfrak{m}}_s,\mathfrak{m})| + c_S\norm{Y^{i,N}_s}_{L^1_\mu}\Big) \leq c\big(1 + \norm{Y^{i,N}_s}_{L^1_\mu}\big),
    $$
    where we used that $\sup_{s}|h(s,\Gamma_s,\Lambda^{\mathfrak{m}}_s,\mathfrak{m})|<\infty$, 
    since $(\Gamma,\Lambda^{\mathfrak{m}},\mathfrak{m})$ are the continuous functions associated with the fixed solution $(\hat\alpha,\mathfrak{m})$ of the mean field game (Proposition~\ref{prop:gamma}). Using $|\mu[\varphi]|\leq\norm{\varphi}_{L^1_\mu}$, the boundedness of $\beta,\gamma$, and Jensen's inequality for the empirical average, we get
    $$
    \E\left[|\Delta^1_s|^p + |\Delta^2_s|^p\right] \leq c\Big(1 + \E\big[\norm{Y^{i,N}_s}_{L^1_\mu}^p\big] + \tfrac{1}{N}\textstyle\sum_{j=1}^N \E\big[\norm{Y^{j,N}_s}_{L^1_\mu}^p\big]\Big).
    $$
    Set $\phi_N(s):=\sup_{1\leq i\leq N}\E[\norm{Y^{i,N}_s}_{L^1_\mu}^p]$, which is finite for each fixed $N$ by the well-posedness of the particle system \cite[Theorem 4.1]{LQ_Volterra_AJMP}. The right-hand side above is bounded by $c(1+\phi_N(s))$ uniformly in $i$, so Proposition~\ref{prop:usual machinery} yields, for every $i$,
    $$
    \E\left[\norm{Y^{i,N}_t}_{L^1_\mu}^p\right] \leq c\int_0^t \widehat{K}(t-s)\big(1+\phi_N(s)\big)\de s,
    $$
    and taking the supremum over $i$,
    $$
    \phi_N(t) \leq c\norm{\widehat{K}}_{L^1(0,T)} + c\int_0^t \widehat{K}(t-s)\phi_N(s)\de s.
    $$
    Since $\widehat{K}$ and the resolvent $\widehat{r}$ of $-c\widehat{K}$ depend only on $c$, $\mu$, $p$ and $T$, the convolutional Gronwall inequality gives $\phi_N(t) \leq c\norm{\widehat{K}}_{L^1(0,T)}(1+\norm{\widehat{r}}_{L^1(0,T)})$, a bound independent of $N$. This is the first estimate of the statement.

    For the replicas $Y^i$, the argument is identical, with the empirical average $\frac{1}{N}\sum_{j}\mu[Y^{j,N}_s]$ replaced by $\E[\mu[Y^i_s]]$ and $\alpha^{i,N}$ by $\alpha^i$; the quantity $\E[\norm{Y^i_s}_{L^1_\mu}^p]$, which is independent of $i$ by symmetry, satisfies the same inequality, giving the second estimate.

    Finally, since $X^{i,N}_t = g_0(t)+\mu[Y^{i,N}_t]$, we have $|X^{i,N}_t| \leq |g_0(t)| + \norm{Y^{i,N}_t}_{L^1_\mu}$, the boundedness of $g_0$ gives the corresponding bound for $X^{i,N}$; the same holds for $X^i$.
\end{proof}
In particular, since $|\alpha^{i,N}_t|\leq c\big(1+\norm{Y^{i,N}_t}_{L^1_\mu}\big)$, as shown in the proof above, the case $p=4$ gives $$\sup_{1\leq i\leq N}\sup_{t\in[0,T]}\E[|\alpha^{i,N}_t|^4]\leq c.$$ Hence the feedback profile $(\alpha^{i,N})_{1\leq i\leq N}$ is admissible, as required in Definition~\ref{def:nash}.
\begin{proposition}
\label{prop:propagation chaos}
    Suppose Assumptions~\ref{ass:kernel} and~\ref{ass:data} hold. Then, for every $1 \leq i \leq N$ and $p=2,3,4$, there exists a constant $c=c(p)>0$, independent of $N$, such that 
    \begin{equation}
        \sup_{t \in [0,T]} \E\left[ \norm{Y^{i,N}_t-Y^i_t}_{L^1_\mu}^p \right] \leq \frac{c}{N^{p/2}}.
    \end{equation}
    In particular, it holds
    \begin{equation}
    \label{eq:prop chaos X}
        \sup_{t \in [0,T]} \E\left[ |X^{i,N}_t-X^i_t|^p \right] \leq \frac{c}{N^{p/2}}.
    \end{equation}
\end{proposition}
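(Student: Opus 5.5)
We would compare $Y^{i,N}$ and $Y^i$ directly, since both are driven by the same Brownian motion $W^i$. Subtracting \eqref{eq: mean field replicas} from \eqref{eq:particles lifted}, the difference $\delta^i_t:=Y^{i,N}_t-Y^i_t$ solves a lifted Volterra equation of the form treated in Proposition~\ref{prop:usual machinery}, with zero initial condition and coefficients
$$
\Delta^1_s = B\Big(\mu[\delta^i_s] - \tfrac1N\textstyle\sum_{j}\mu[Y^{j,N}_s] + \E[\mu[Y^i_s]]\Big) + C(\alpha^{i,N}_s-\alpha^i_s),
$$
and $\Delta^2_s$ given by the same expression with $D,F$ in place of $B,C$. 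Here we use the consistency condition $\mathfrak m_s - g_0(s)=\E[\mu[Y^i_s]]$, which is common to all $i$. The feedback offsets $h(s,\Gamma_s,\Lambda^{\mathfrak m}_s,\mathfrak m)$ cancel in the control difference. Lemmas~\ref{lemma:hat_N_bdd} and~\ref{lemma:S_Gamma_bdd} then give $|\alpha^{i,N}_s-\alpha^i_s|\le c_Rc_S\norm{\delta^i_s}_{L^1_\mu}$.

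The mean-field term is split as
$$
\tfrac1N\textstyle\sum_j\mu[Y^{j,N}_s]-\E[\mu[Y^1_s]] = \tfrac1N\sum_j\mu[\delta^j_s] + \Big(\tfrac1N\sum_j\mu[Y^j_s]-\E[\mu[Y^1_s]]\Big).
$$
The first piece is bounded by Jensen's inequality: its $p$-th moment is at most $\frac1N\sum_j\E\norm{\delta^j_s}^p_{L^1_\mu}$. The second piece is a centered empirical mean of the i.i.d.\ real variables $\mu[Y^j_s]$. By Lemma~\ref{lemma:CLT}, together with the uniform $p$-moment bound of Proposition~\ref{prop:a priori estimates} (with $p\le 4$), its $p$-th moment is at most $cN^{-p/2}$ uniformly in $s$. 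Combining these bounds,
$$
\E\big[|\Delta^1_s|^p+|\Delta^2_s|^p\big]\le c\Big(\phi_N(s)+N^{-p/2}\Big),\qquad \phi_N(s):=\sup_{1\le j\le N}\E\norm{\delta^j_s}^p_{L^1_\mu}.
$$

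Proposition~\ref{prop:usual machinery} then gives $\E\norm{\delta^i_t}^p_{L^1_\mu}\le c\int_0^t\widehat K(t-s)\big(\phi_N(s)+N^{-p/2}\big)\de s$. Taking the supremum over $i$, $\phi_N$ satisfies a linear convolution inequality whose kernel $\widehat K$ is independent of $N$. The function $\phi_N$ is finite by Proposition~\ref{prop:a priori estimates}. Exactly as in the proof of that proposition, the convolutional Gronwall inequality yields $\phi_N(t)\le cN^{-p/2}\norm{\widehat K}_{L^1}(1+\norm{\widehat r}_{L^1})$, which is the first claim.

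The estimate \eqref{eq:prop chaos X} then follows from $X^{i,N}_t-X^i_t=\mu[\delta^i_t]$ and $|\mu[\varphi]|\le\norm{\varphi}_{L^1_\mu}$. The main point requiring care is the law-of-large-numbers step. The stated rate $N^{-p/2}$ needs a Marcinkiewicz--Zygmund/Rosenthal type bound for $p$-th moments, not just a variance bound. It also needs to hold uniformly in $s$, which is why the uniform $L^4$ a priori estimate on the replicas is essential; I expect Lemma~\ref{lemma:CLT} to provide exactly this.
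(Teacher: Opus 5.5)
Your proof is correct and follows the paper's argument: the same decomposition of $\Delta^1,\Delta^2$, the Lipschitz bound on the feedback difference, the split of the empirical mean into a term handled by Lemma~\ref{lemma:CLT} and a coupling term, then Proposition~\ref{prop:usual machinery} and the convolutional Gronwall inequality. The only difference is how you close the coupling term: you use $\phi_N(s)=\sup_j\E\norm{\delta^j_s}^p_{L^1_\mu}$, as in Proposition~\ref{prop:a priori estimates}, whereas the paper reduces $\frac1N\sum_j\E\norm{\delta^j_s}^p_{L^1_\mu}$ to $\E\norm{\delta^i_s}^p_{L^1_\mu}$ because the differences are identically distributed; your version avoids having to justify that exchangeability.
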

\begin{proof}
    For any $1\leq i\leq N$, consider the difference
    \begin{align*}
         |Y^{i,N}_t(\theta)-Y^i_t(\theta)|  = \left| \int_0^t e^{-\theta(t-s)}\Delta^1_s\de s+ \int_0^t e^{-\theta(t-s)}\Delta^2_s\de W^i_s  \right|,
    \end{align*}
    where
    \begin{align*}
        \Delta^1_t &= B\left( \mu[Y^{i,N}_t]-\mu[Y^{i}_t]  \right) + B\left( \E\left[ \mu[Y^{i}_t] \right] - \frac{1}{N}\sum_{j=1}^N \mu[Y^{j,N}_t] \right) + C(\alpha^{i,N}_t - \alpha^i_t), \\
        \Delta^2_t &= D\left( \mu[Y^{i,N}_t]-\mu[Y^{i}_t]  \right) + D\left( \E\left[ \mu[Y^{i}_t] \right] - \frac{1}{N}\sum_{j=1}^N \mu[Y^{j,N}_t] \right) + F(\alpha^{i,N}_t - \alpha^i_t).
    \end{align*}

We focus on $\E[|\Delta^1_s|^p]$ since we treat the one with $\Delta^2$ in the same way
\begin{align*}
    \E\left[|\Delta^1_s|^p\right] \leq c\,\E\Bigg[ |B|^p \norm{Y^{i,N}_s - Y^i_s}_{L^1_\mu}^p + |C|^p |\alpha^{i,N}_s - \alpha^i_s|^p + |B|^p \left| \E\left[ \mu[Y^{i}_s] \right] - \frac{1}{N}\sum_{j=1}^N \mu[Y^{j,N}_s] \right|^p \Bigg].
\end{align*}
Now, by their definitions, it is easy to see that
\begin{equation}
\label{eq:Lip di alpha hat MF}
    |\alpha^{i,N}_s - \alpha^i_s| \leq c \norm{Y^{i,N}_s - Y^i_s}_{L^1_\mu}.
\end{equation}
Moreover
\begin{align*}
    \E\Bigg[ \left| \E\left[ \mu[Y^{i}_s] \right] - \frac{1}{N}\sum_{j=1}^N \mu[Y^{j,N}_s] \right|^p \Bigg] & \leq c\, \E\Bigg[ \left| \E\left[ \mu[Y^{i}_s] \right] - \frac{1}{N}\sum_{j=1}^N \mu[Y^{j}_s] \right|^p \Bigg] %
    + \E\Bigg[ \left| \frac{1}{N}\sum_{j=1}^N \left(\mu[Y^j_s]-\mu[Y^{j,N}_s]\right) \right|^p \Bigg].
\end{align*}
The first term is $O(N^{-p/2})$ by Lemma \ref{lemma:CLT}, while the second reduces to $\E[\norm{Y^{i,N}_s - Y^i_s}_{L^1_\mu}^p]$ since $\mu[Y^j_s]-\mu[Y^{j,N}_s]$ are identically distributed for every $1 \leq j \leq N$. We end up with
\begin{align*}
    \E\left[|\Delta^1_s|^p\right] \leq c \left( \mathbb{E}\!\left[ \|Y^{i,N}_s - Y^i_s\|_{L^1_\mu}^p \right] + \frac{1}{N^{p/2}} \right).
\end{align*}
Therefore, by Proposition \ref{prop:usual machinery} we get
\begin{equation*}
    \mathbb{E}\!\left[ \|Y^{i,N}_t - Y^i_t\|_{L^1_\mu}^p \right] \leq c\int_0^t \widehat{K}(t-s)\left( \mathbb{E}\!\left[ \|Y^{i,N}_s - Y^i_s\|_{L^1_\mu}^p \right] + \frac{1}{N^{p/2}} \right)\de s.
\end{equation*}
An application of convolutional Gronwall inequality gives
\begin{equation}
    \mathbb{E}\!\left[ \|Y^{i,N}_t - Y^i_t\|_{L^1_\mu}^p \right] \leq c\norm{\widehat{K}}_{L^1(0,T)}(1+\norm{\widehat{r}}_{L^1(0,T)}) \frac{1}{N^{p/2}},
\end{equation}
and we conclude by noticing that
$$
|X^{i,N}_t-X^i_t| \leq \|Y^{i,N}_t - Y^i_t\|_{L^1_\mu}, \quad t \in [0,T],
$$
in view of the projection map.
\end{proof}

\subsection{Approximated Nash equilibrium for the \texorpdfstring{$N$}{N}-agent system}
\label{ssec:nash}
We denote by $J^\infty$ the (optimal) cost of the mean field problem
\begin{equation}
    J^\infty(\alpha^i) = \E\left[ \int_0^T \left( f(X^i_t,\alpha^i_t) + Q\,\mathfrak{m}_t^2 + 2L\,\mathfrak{m}_t \right)\de t \right],
\end{equation}
where $X^i_t = g_0(t) + \mu[Y^i_t]$, we have used the consistency relation $\mathfrak{m}_t = \E[X^i_t]$, and we recall that $f(x,a)= Q x^2 + R a^2 + 2 Lx$ since we are working with $d=1$.
Analogously, we denote by $J^{i,N}$ the cost functional of the $i$-th player
\begin{equation}
    J^{i,N}(\alpha^{1,N},\dots,\alpha^{N,N}) := \E\left[ \int_0^T \left\{ f(X^{i,N}_t,\alpha^{i,N}_t) + Q\left(\frac{1}{N}\sum_{j=1}^N X^{j,N}_t\right)^2 + 2L\frac{1}{N}\sum_{j=1}^N X^{j,N}_t \right\}\de t \right].
\end{equation}
The first step compares, for a single player, the cost in the $N$-player game with its mean field counterpart, when all players use the equilibrium control.
\begin{lemma}
\label{lemma:comparison of J^infty and J^i,N on optimal alpha}
    Suppose Assumptions~\ref{ass:kernel} and~\ref{ass:data} hold. Then, for every $1 \leq i \leq N$, there exists a constant $c>0$, independent of $N$, such that
    \begin{equation}
        |J^\infty(\alpha^i)-J^{i,N}(\alpha^{1,N},\dots,\alpha^{N,N})| \leq \frac{c}{\sqrt N}.
    \end{equation}
\end{lemma}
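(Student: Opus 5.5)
We will write the difference of the two costs as a sum of three contributions and estimate each by Cauchy--Schwarz, combining the uniform moment bounds of Proposition~\ref{prop:a priori estimates} with the propagation of chaos of Proposition~\ref{prop:propagation chaos} (case $p=2$) and the law of large numbers of Lemma~\ref{lemma:CLT}. Since $d=q=1$, set $\overline{X}^N_t:=\frac1N\sum_{j}X^{j,N}_t$. Then
\begin{align*}
J^{i,N}(\alpha^{1,N},\dots,\alpha^{N,N})-J^\infty(\alpha^i)
&=\E\int_0^T\Big\{Q\big((X^{i,N}_t)^2-(X^i_t)^2\big)+R\big((\alpha^{i,N}_t)^2-(\alpha^i_t)^2\big)+2L\big(X^{i,N}_t-X^i_t\big)\Big\}\de t\\
&\quad+\E\int_0^T\Big\{Q\big((\overline{X}^N_t)^2-\mathfrak{m}_t^2\big)+2L\big(\overline{X}^N_t-\mathfrak{m}_t\big)\Big\}\de t .
\end{align*}

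\emph{Terms involving the player's own state and control.} We factor $a^2-b^2=(a-b)(a+b)$ and apply Cauchy--Schwarz:
\[
\E\big|(X^{i,N}_t)^2-(X^i_t)^2\big|\le \E\big[|X^{i,N}_t-X^i_t|^2\big]^{1/2}\,\E\big[|X^{i,N}_t+X^i_t|^2\big]^{1/2}\le \frac{c}{\sqrt N}.
\]
The first factor is bounded by \eqref{eq:prop chaos X} with $p=2$, and the second by Proposition~\ref{prop:a priori estimates}. The linear term $2L(X^{i,N}_t-X^i_t)$ is handled by Jensen's inequality in the same way. For the controls, the Lipschitz bound \eqref{eq:Lip di alpha hat MF} gives $\E|\alpha^{i,N}_t-\alpha^i_t|^2\le c\,\E\|Y^{i,N}_t-Y^i_t\|^2_{L^1_\mu}\le c/N$. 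The affine bound $|\alpha|\le c(1+\|Y\|_{L^1_\mu})$ together with Proposition~\ref{prop:a priori estimates} bounds $\E|\alpha^{i,N}_t+\alpha^i_t|^2$ uniformly in $N$. All these estimates are uniform in $t$, so integrating over $[0,T]$ gives a contribution of order $c/\sqrt N$.

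\emph{Mean field term.} This is the step that needs the most care, since $\overline{X}^N$ involves all players. We insert the empirical mean of the replicas and split
\[
\overline{X}^N_t-\mathfrak{m}_t=\frac1N\sum_{j=1}^N\big(X^{j,N}_t-X^j_t\big)+\Big(\frac1N\sum_{j=1}^N X^j_t-\E[X^1_t]\Big),
\]
using the consistency relation $\mathfrak{m}_t=\E[X^j_t]$. By Jensen's inequality and Proposition~\ref{prop:propagation chaos}, the first piece has second moment at most $\frac1N\sum_j\E|X^{j,N}_t-X^j_t|^2\le c/N$. The second piece is an empirical mean of i.i.d.\ centered variables: $X^j_t-\E X^j_t=\mu[Y^j_t]-\E\mu[Y^j_t]$. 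Lemma~\ref{lemma:CLT}, or simply the variance computation together with the moment bound, gives a second moment at most $c/N$. Hence $\E|\overline{X}^N_t-\mathfrak{m}_t|^2\le c/N$ uniformly in $t$.

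Finally, $(\overline{X}^N_t)^2-\mathfrak{m}_t^2=(\overline{X}^N_t-\mathfrak{m}_t)(\overline{X}^N_t+\mathfrak{m}_t)$. Here $\mathfrak{m}$ is bounded, and Jensen's inequality with Proposition~\ref{prop:a priori estimates} bounds $\E|\overline{X}^N_t|^2$ uniformly in $N$. Cauchy--Schwarz therefore gives a bound $c/\sqrt N$, and the linear $L$-term is bounded likewise.

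The one point that must be kept explicit is that every constant is independent of both $N$ and $i$, which follows from the uniformity stated in Propositions~\ref{prop:a priori estimates} and~\ref{prop:propagation chaos}. Summing the three contributions yields the claimed bound $c/\sqrt N$.
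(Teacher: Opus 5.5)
Your proposal is correct and follows essentially the same route as the paper. You use the same split into the player's own state and control terms and the quadratic and linear mean-field terms, each handled by difference of squares and Cauchy--Schwarz together with the a priori bounds, the Lipschitz estimate \eqref{eq:Lip di alpha hat MF}, propagation of chaos, and Lemma~\ref{lemma:CLT} applied to the i.i.d.\ replicas after inserting their empirical mean.
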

\begin{proof}
    A direct computation leads to
    $$
    |J^\infty(\alpha^i)-J^{i,N}(\alpha^{1,N},\dots,\alpha^{N,N})| \leq c(
    \mathrm{\textbf{I}} + \mathrm{\textbf{II}} + \,\mathrm{\textbf{III}}),
    $$
    where
    \begin{align*}
        \mathrm{\textbf{I}} & =\E\left[ \int_0^T|f(X^i_t,\alpha^i_t)-f(X^{i,N}_t,\alpha^{i,N}_t)|\de t\right], \\[0.2cm]
        \mathrm{\textbf{II}} & = \E\left[ \int_0^T\left| \mathfrak{m}_t^2 -  \left(\frac{1}{N}\sum_{j=1}^N X^{j,N}_t\right)^2\right|\de t\right], \\[0.2cm]
        \mathrm{\textbf{III}} &=  \E\left[ \int_0^T\left| \mathfrak{m}_t - \frac{1}{N}\sum_{j=1}^N X^{j,N}_t \right|\de t\right].
    \end{align*}
    We start with the third.
    \begin{align*}
        \mathrm{\textbf{III}} \leq \E\left[ \int_0^T\left| \E[\mu[Y^i_t]] - \frac{1}{N}\sum_{j=1}^N \mu[Y^{j}_t] \right|\de t\right] + \E\left[ \int_0^T\left| \frac{1}{N}\sum_{j=1}^N (\mu[Y^{j}_t] - \mu[Y^{j,N}_t]) \right|\de t\right] \leq \frac{c}{\sqrt N},
    \end{align*}
    by Lemma \ref{lemma:CLT} and Proposition \ref{prop:propagation chaos}. Now,
    \begin{align*}
        \mathrm{\textbf{II}} = \E\left[ \int_0^T\left| \mathfrak{m}_t - \frac{1}{N}\sum_{j=1}^N X^{j,N}_t \right|\left| \mathfrak{m}_t + \frac{1}{N}\sum_{j=1}^N X^{j,N}_t \right|\de t\right] \leq \frac{c}{\sqrt N},
    \end{align*}
    by Cauchy-Schwarz inequality, the same computations done for $\mathrm{\textbf{III}}$ and the a priori bounds of Proposition~\ref{prop:a priori estimates}. At last,
    \begin{align*}
        \mathrm{\textbf{I}} \leq c\,(\mathrm{\textbf{I}}_a+\mathrm{\textbf{I}}_b+\mathrm{\textbf{I}}_c),
    \end{align*}
    where
    \begin{align*}
        \mathrm{\textbf{I}}_a  = \E\left[ \int_0^T |(X^{i}_t)^2-(X^{i,N}_t)^2|\de t\right],    \quad 
        \mathrm{\textbf{I}}_b  = \E\left[ \int_0^T |(\alpha^i_t)^2-(\alpha^{i,N}_t)^2|\de t\right], \quad
        \mathrm{\textbf{I}}_c  = \E\left[ \int_0^T |X^i_t-X^{i,N}_t|\de t\right].
    \end{align*}
    Now,
    \begin{align*}
        \mathrm{\textbf{I}}_c \leq \E\left[ \int_0^T \norm{Y^i_t-Y^{i,N}_t}_{L^1_\mu}\de t\right] \leq \frac{c}{\sqrt N},
    \end{align*}
    by Proposition \ref{prop:propagation chaos}. We deal with the other two terms in the same way after splitting the difference of the squares, using Cauchy-Schwarz inequality, the a priori bounds of Proposition~\ref{prop:a priori estimates} and, for $\mathrm{\textbf{I}}_b$, the estimate \eqref{eq:Lip di alpha hat MF}.
\end{proof}

Now, let $\eta$ be an admissible control and consider the dynamics of the players when player 1 (without loss of generality, we can work only on player 1 by symmetry) uses $\eta$ instead of $\alpha^{1,N}$, while the other players keep applying the equilibrium feedback to their own lifted states. We denote by $Y^{i,N,\eta}$ the lifted state of player $i$ after the deviation. More precisely, $Y^{1,N,\eta}$ and $Y^{i,N,\eta}$, $2\leq i\leq N$, solve
\begin{equation}
\label{eq:particles lifted with beta}
    \begin{aligned}
        Y^{1,N,\eta}_t(\theta) & = \int_0^t e^{-\theta(t-s)}\left[ \beta(s) + B \left( \mu[Y^{1,N,\eta}_s] - \frac{1}{N}\sum_{j=1}^N \mu[Y^{j,N,\eta}_s] \right) + C \eta_s\right]\de s\\
        & \qquad + \int_0^t e^{-\theta(t-s)}\left[ \gamma(s) + D \left( \mu[Y^{1,N,\eta}_s] - \frac{1}{N}\sum_{j=1}^N \mu[Y^{j,N,\eta}_s] \right) + F \eta_s\right]\de W_s^1, \\[0.2cm]
        Y^{i,N,\eta}_t(\theta) & = \int_0^t e^{-\theta(t-s)}\left[ \beta(s) + B \left( \mu[Y^{i,N,\eta}_s] - \frac{1}{N}\sum_{j=1}^N \mu[Y^{j,N,\eta}_s] \right) + C \alpha^{i,N,\eta}_s\right]\de s\\
        & \qquad + \int_0^t e^{-\theta(t-s)}\left[ \gamma(s) + D \left( \mu[Y^{i,N,\eta}_s] - \frac{1}{N}\sum_{j=1}^N \mu[Y^{j,N,\eta}_s] \right) + F \alpha^{i,N,\eta}_s\right]\de W_s^i,
    \end{aligned}
\end{equation}
where, for $2\leq i\leq N$,
$$
\alpha^{i,N,\eta}_t := -\hat{R}(\Gamma_t)^{-1} \left\{ h(t,\Gamma_t,\Lambda_t^\mathfrak{m},\mathfrak{m}) + \int_0^\infty S(\Gamma_t)(\theta)Y^{i,N,\eta}_t(\theta)\mu(\de\theta) \right\}.
$$
Since this feedback is affine in the lifted state with bounded coefficients, the system is well-posed by \cite[Theorem 4.1]{LQ_Volterra_AJMP}, and, as in \eqref{eq:Lip di alpha hat MF},
\begin{equation}
\label{eq:non-deviating controls}
    |\alpha^{i,N,\eta}_t| \leq c\big(1+\norm{Y^{i,N,\eta}_t}_{L^1_\mu}\big), \qquad |\alpha^{i,N,\eta}_t-\alpha^{i,N}_t| \leq c\,\norm{Y^{i,N,\eta}_t-Y^{i,N}_t}_{L^1_\mu}, \qquad 2\leq i\leq N.
\end{equation}
These are the only properties of the controls of the non-deviating players used below.

\medskip 

The a priori bounds of Proposition~\ref{prop:a priori estimates} extend to the deviated system, at the price of a dependence on the size of the deviation.
From here on the estimates involving $\eta$ are stated in time-integrated form, rather than uniformly in $t$ as in Section~\ref{ssec:convergence}, since the integral norm is the one controlled by the cost, as shown in Remark~\ref{rmk:competitive}.
\begin{proposition}
\label{prop:a priori estimates deviated}
    Suppose Assumptions~\ref{ass:kernel} and~\ref{ass:data} hold. Then, for every $p=2,3,4$ there exists a constant $c=c(p)>0$, independent of $N$, such that for every admissible deviation $\eta \in \mathcal{A}$ of player $1$,
    \begin{equation}
    \label{eq:a priori deviated}
        \sup_{1\leq i\leq N}\int_0^T\E\left[\norm{Y^{i,N,\eta}_t}_{L^1_\mu}^p\right]\de t \leq c\left(1 + \int_0^T\E\left[|\eta_t|^p\right]\de t\right),
    \end{equation}
    and the same bound holds for $X^{i,N,\eta}$ in place of $Y^{i,N,\eta}$.
\end{proposition}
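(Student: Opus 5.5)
We follow the proof of Proposition~\ref{prop:a priori estimates}, applying Proposition~\ref{prop:usual machinery} to each lifted process of the deviated system \eqref{eq:particles lifted with beta}. The new feature is that the control $\eta$ of player~$1$ is not a feedback of its own state. Its contribution therefore enters the estimate as a source term, which will be controlled in time-integrated form.

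\emph{Step 1: the coefficients.} For player $1$, the drift and diffusion coefficients are
$$\Delta^{1}_s=\beta(s)+B\Big(\mu[Y^{1,N,\eta}_s]-\tfrac1N\textstyle\sum_j\mu[Y^{j,N,\eta}_s]\Big)+C\eta_s,$$
and the analogous $\Delta^2_s$ with $\gamma,D,F$ in place of $\beta,B,C$. Their $p$-th moments are bounded by
$$c\Big(1+\E\big[\norm{Y^{1,N,\eta}_s}^p_{L^1_\mu}\big]+\tfrac1N\textstyle\sum_j\E\big[\norm{Y^{j,N,\eta}_s}^p_{L^1_\mu}\big]+\E|\eta_s|^p\Big),$$
using Jensen's inequality for the empirical average. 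For $i\geq2$, the first bound in \eqref{eq:non-deviating controls} gives the same estimate without the $\eta$ term. Set $\phi(s):=\sup_{1\le i\le N}\E[\norm{Y^{i,N,\eta}_s}^p_{L^1_\mu}]$. Then every coefficient moment is bounded by $c(1+\phi(s)+\E|\eta_s|^p)$, with $c$ independent of $N$, $i$ and $\eta$. Finiteness of $\phi$ for fixed $N$ comes from well-posedness. If $\E|\eta_s|^p$ is only integrable in time and not bounded, we still get $\phi\in L^1(0,T)$, because $\eta\in\mathcal A$ has bounded fourth moments.

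\emph{Step 2: Volterra–Gronwall in integrated form.} Proposition~\ref{prop:usual machinery} gives
$$\phi(t)\le c\int_0^t\widehat K(t-s)\big(1+\phi(s)+\E|\eta_s|^p\big)\de s.$$
Let $f(s):=c\,(1+\E|\eta_s|^p)$, and write the forcing as $g:=\widehat K*f$. The convolutional Gronwall inequality, with the nonnegative resolvent $\widehat r$ of $-c\widehat K$, yields $\phi\le g+\widehat r*g$. Integrating over $[0,T]$ and applying Young's inequality $\norm{k*f}_{L^1(0,T)}\le\norm{k}_{L^1}\norm{f}_{L^1}$ twice gives
$$\int_0^T\phi\le c\,\norm{\widehat K}_{L^1}\big(1+\norm{\widehat r}_{L^1}\big)\Big(1+\int_0^T\E|\eta_t|^p\de t\Big).$$
Since $\sup_i\int_0^T\E\norm{Y^{i,N,\eta}_t}^p\de t\le\int_0^T\phi$, this is \eqref{eq:a priori deviated}.

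The main obstacle is that the sharp bound cannot be taken pointwise in $t$. The machinery of Proposition~\ref{prop:usual machinery} convolves the source $\E|\eta_s|^p$ against a possibly singular $\widehat K$, so a sup-in-time bound would require $\sup_s\E|\eta_s|^p$ rather than its integral. We therefore keep the Gronwall step in convolution form and integrate only at the end, where Young's inequality converts the convolution into a product of $L^1$ norms. We also need to check that the Gronwall lemma applies with an $L^1$ (not bounded) forcing; the resolvent representation holds for locally integrable forcings, so this is fine.

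\emph{Step 3: the states.} For $X^{i,N,\eta}=g_0+\mu[Y^{i,N,\eta}]$, we have $|X^{i,N,\eta}_t|^p\le c\,(|g_0(t)|^p+\norm{Y^{i,N,\eta}_t}^p_{L^1_\mu})$. Integrating in time and using that $g_0$ is bounded gives the same bound for the states.
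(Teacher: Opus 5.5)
Your proposal is correct and follows the paper's proof essentially step by step. Both arguments bound the coefficients using the first estimate in \eqref{eq:non-deviating controls} for $i\geq2$ and treat $\eta$ as a source term for player~$1$. Both then apply the convolutional Gronwall inequality with the contribution of $\eta$ kept inside the convolution, and use Young's inequality in $L^1(0,T)$ after integrating in time. One cosmetic point: with the paper's convention the resolvent of $-c\widehat K$ is non-positive, so your $\widehat r$ should be read as $-\widehat r$; the paper also uses the resolvent identity to collapse your $g+(-\widehat r)*g$ into a single term $(-\widehat r)*(1+\E|\eta|^p)$, up to constants.
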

\begin{proof}
    The argument is the one of Proposition~\ref{prop:a priori estimates}, with two modifications. For the players $i\geq 2$, the first estimate in \eqref{eq:non-deviating controls} gives $\E[|\alpha^{i,N,\eta}_s|^p]\leq c\big(1+\E[\norm{Y^{i,N,\eta}_s}_{L^1_\mu}^p]\big)$, so their coefficients are estimated as before; for player~$1$ the exogenous control $\eta$ replaces $\alpha^{1,N}$ and contributes the additional terms $|C|^p\,\E[|\eta_s|^p]$ and $|F|^p\,\E[|\eta_s|^p]$. Writing $\varphi^\eta_N(s):=\sup_{1\leq i\leq N}\E[\norm{Y^{i,N,\eta}_s}_{L^1_\mu}^p]$, which is finite for each fixed $N$ and $\eta$, and keeping the contribution of $\eta$ inside the convolution, Proposition~\ref{prop:usual machinery} gives
    $$
    \varphi^\eta_N(t) \leq c\int_0^t \widehat{K}(t-s)\Big(1 + \varphi^\eta_N(s) + \E[|\eta_s|^p]\Big)\de s.
    $$
    By the convolutional Gronwall inequality, denoting by $\widehat{r}$ the resolvent of $-c\widehat{K}$, which is non-positive, we get
    $$
    \varphi^\eta_N(t) \leq c\int_0^t \big(-\widehat{r}(t-s)\big)\Big(1+\E[|\eta_s|^p]\Big)\de s.
    $$
    By Young's convolution inequality in $L^1(0,T)$, integrating in $t$ over $[0,T]$ yields
    $$
    \int_0^T \varphi^\eta_N(t)\,\de t \leq c\left(1+\int_0^T \E[|\eta_s|^p]\de s\right),
    $$
    the constant depending on the data of the problem only through the $L^1(0,T)$-norm of $\widehat{r}$, and in particular not on $N$. This entails \eqref{eq:a priori deviated}. The bound for $X^{i,N,\eta}$ follows from $|X^{i,N,\eta}_t| \leq |g_0(t)| + \norm{Y^{i,N,\eta}_t}_{L^1_\mu}$ and the boundedness of $g_0$.
\end{proof}

We now quantify the effect of the deviation of player~$1$ on the whole system: since a single player influences the empirical mean only through a factor $1/N$, all the other players remain close to their equilibrium counterparts.

\begin{lemma}
    Suppose Assumptions~\ref{ass:kernel} and~\ref{ass:data} hold. Then, for every admissible deviation $\eta$ of player $1$, there exists a constant $c>0$, independent of $N$, such that
    \[
    \frac{1}{N} \sum_{j=1}^N \int_0^T \E\left[ \norm{Y^{j,N,\eta}_t-Y^{j,N}_t}_{L^1_\mu}^2 \right]\de t \leq \frac{c}{N} \int_0^T \E\left[ |\eta_t-\alpha^{1,N}_t|^2 \right]\de t.
    \]
\end{lemma}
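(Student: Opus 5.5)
We would compare the deviated and undeviated systems player by player, setting $\delta^j_t := Y^{j,N,\eta}_t - Y^{j,N}_t$ for $1\le j\le N$. Both systems are driven by the same Brownian motions, so $\delta^j$ solves a lifted Volterra equation of the form $\delta^j_t(\theta)=\int_0^t e^{-\theta(t-s)}\Delta^{1,j}_s\de s+\int_0^t e^{-\theta(t-s)}\Delta^{2,j}_s\de W^j_s$. The coefficients are
\[
\Delta^{1,j}_s = B\Big(\mu[\delta^j_s]-\tfrac1N\textstyle\sum_{k=1}^N\mu[\delta^k_s]\Big)+C\,a^j_s,
\]
and $\Delta^{2,j}_s$ is the same expression with $D,F$ in place of $B,C$. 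Here $a^1_s:=\eta_s-\alpha^{1,N}_s$, and $a^j_s:=\alpha^{j,N,\eta}_s-\alpha^{j,N}_s$ for $j\ge2$. By \eqref{eq:non-deviating controls}, $|a^j_s|\le c\norm{\delta^j_s}_{L^1_\mu}$ for $j\ge 2$. Since each player's equation is driven by its own Brownian motion, Proposition~\ref{prop:usual machinery} with $p=2$ applies player by player.

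\emph{Step 1 (deviating player).} Jensen's inequality gives $|\tfrac1N\sum_k\mu[\delta^k_s]|^2\le \tfrac1N\sum_k\norm{\delta^k_s}^2_{L^1_\mu}$. Set $u_s:=\E[\norm{\delta^1_s}^2_{L^1_\mu}]$, $v_s:=\tfrac1N\sum_{k}\E[\norm{\delta^k_s}^2_{L^1_\mu}]$ and $e_s:=\E[|\eta_s-\alpha^{1,N}_s|^2]$. Proposition~\ref{prop:usual machinery} then gives $u_t\le c\int_0^t\widehat K(t-s)(u_s+v_s+e_s)\de s$.

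\emph{Step 2 (non-deviating players).} For $j\ge2$ we get $\E\norm{\delta^j_t}^2_{L^1_\mu}\le c\int_0^t\widehat K(t-s)\big(\E\norm{\delta^j_s}^2_{L^1_\mu}+v_s\big)\de s$. The only source term is the mean-field discrepancy $v_s$, and $v_s$ contains the term $u_s/N$. Averaging the Step~1 and Step~2 inequalities over $j$, the term $e_s$ enters only through player~1, hence with weight $1/N$. This yields the closed inequality
\[
v_t\le c\int_0^t\widehat K(t-s)\Big(v_s+\tfrac1N e_s\Big)\de s .
\]

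\emph{Step 3 (conclusion).} Exactly as in the proof of Proposition~\ref{prop:a priori estimates deviated}, we apply the convolutional Gronwall inequality with the nonpositive resolvent $\widehat r$ of $-c\widehat K$. This gives $v_t\le \tfrac cN\int_0^t(-\widehat r(t-s))e_s\de s$. Integrating over $[0,T]$ and using Young's inequality in $L^1(0,T)$ gives $\int_0^Tv_t\de t\le \tfrac cN\int_0^Te_t\de t$, which is the claim.

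We work in time-integrated form because $e_s$ need not be bounded uniformly in $t$. Finiteness of all quantities, needed to run Gronwall, follows from Proposition~\ref{prop:a priori estimates deviated} and $\eta\in\mathcal A$.

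The main obstacle is Step~2: we must check that averaging over $j$ loses no factor of $N$. The required bound $\E[\norm{\delta^1}^2]\lesssim e$ is $O(1)$, whereas the average must be $O(1/N)$, so we must not close the inequality for $u$ separately and then plug it in crudely. The fix is to close the inequality directly for the average $v$, where $u$ appears only with weight $1/N$.
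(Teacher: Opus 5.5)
Your proposal is correct and follows the paper's proof essentially step for step. You apply Proposition~\ref{prop:usual machinery} player by player, average over $j$ so that the deviation enters only with weight $1/N$ and the inequality closes for the average, then conclude with the convolutional Gronwall inequality and Young's inequality in $L^1(0,T)$. One minor inaccuracy in your side remark: since $\eta\in\mathcal A$, $e_s$ is in fact bounded in $s$, so the time-integrated form is used because that is the quantity controlled by the cost (Remark~\ref{rmk:competitive}), not because uniform bounds are unavailable.
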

\begin{proof}
    Fix $2 \leq i \leq N$. By Proposition~\ref{prop:usual machinery} and the second estimate in \eqref{eq:non-deviating controls}, we have
    \begin{align*}
        &\E\left[ \norm{Y^{i,N,\eta}_t-Y^{i,N}_t}_{L^1_\mu}^2 \right]
        \leq c \int_0^t \widehat{K}(t-s) \left( \E\left[ \norm{Y^{i,N,\eta}_s-Y^{i,N}_s}_{L^1_\mu}^2\right]
        + \frac{1}{N}\sum_{j=1}^N \E\left[\norm{Y^{j,N,\eta}_s-Y^{j,N}_s}_{L^1_\mu}^2 \right]\right)\de s.
    \end{align*}
    For player $1$, the same argument yields
    \begin{align*}
    \E\left[ \norm{Y^{1,N,\eta}_t-Y^{1,N}_t}_{L^1_\mu}^2 \right]
        &\leq c \int_0^t \widehat{K}(t-s) \Bigg( \E\left[\norm{Y^{1,N,\eta}_s-Y^{1,N}_s}_{L^1_\mu}^2 \right]
        \\
        &\quad+ \frac{1}{N}\sum_{j=1}^N\E\left[\norm{Y^{j,N,\eta}_s-Y^{j,N}_s}_{L^1_\mu}^2 \right]
        + \E\left[ |\eta_s-\alpha^{1,N}_s|^2 \right] \Bigg)\de s,
    \end{align*}
    the contribution of the deviation being kept inside the convolution. Averaging over all players $i = 1, \ldots, N$ and setting $\phi_N(t) := \frac{1}{N}\sum_{i=1}^N \E[ \norm{Y^{i,N,\eta}_t- Y^{i,N}_t}_{L^1_\mu}^2 ]$, we obtain
    \begin{align*}
        \phi_N(t) \leq c \int_0^t \widehat{K}(t-s)\left( \phi_N(s)
        + \frac{1}{N} \E\left[ |\eta_s-\alpha^{1,N}_s|^2 \right] \right)\de s.
    \end{align*}
    The convolutional Gronwall inequality, followed by Young's convolution inequality in $L^1(0,T)$, gives the conclusion.
\end{proof}
A consequence of the previous lemma and of its proof is that, for $2 \leq i \leq N$,
\begin{equation}
    \int_0^T \E\left[ \norm{Y^{i,N,\eta}_t-Y^{i,N}_t}_{L^1_\mu}^2 \right]\de t \leq \frac{c}{N} \int_0^T \E\left[ |\eta_t-\alpha^{1,N}_t|^2 \right]\de t.
\end{equation}
Therefore, we can control the distance between $Y^{i,N,\eta}$ and $Y^i$ by
\begin{equation*}
    \begin{aligned}
        \int_0^T \E\left[ \norm{Y^{i,N,\eta}_t-Y^{i}_t}_{L^1_\mu}^2 \right]\de t &\leq c\,\left( \int_0^T \E\left[ \norm{Y^{i,N,\eta}_t-Y^{i,N}_t}_{L^1_\mu}^2 \right]\de t +  \int_0^T \E\left[ \norm{Y^{i,N}_t-Y^{i}_t}_{L^1_\mu}^2 \right]\de t \right) \\
        & \leq \frac{c}{N}\left( \int_0^T \E\left[ |\eta_t-\alpha^{1,N}_t|^2 \right]\de t +1\right), \qquad 2\leq i \leq N,
    \end{aligned}
\end{equation*}
where the second term has been estimated by Proposition~\ref{prop:propagation chaos}. We say that a deviation $\eta$ of player $1$ is \emph{competitive} if
\begin{equation}
\label{eq:competitive}
    \int_0^T \E\left[ |\eta_t-\alpha^{1,N}_t|^2 \right]\de t \leq \kappa
\end{equation}
for some constant $\kappa$ independent of $N$; as shown in Remark~\ref{rmk:competitive} below, it is enough to establish the approximate Nash property for such deviations. For a competitive deviation we thus obtain
\begin{equation}
\label{eq:estimates i,N,eta vs i}
    \int_0^T \E\left[ \norm{Y^{i,N,\eta}_t-Y^{i}_t}_{L^1_\mu}^2 \right]\de t \leq \frac{c}{N}, \qquad 2\leq i\leq N,
\end{equation}
so that
\begin{equation}
    \frac{1}{N-1} \sum_{j=2}^N  \int_0^T\E\left[ \norm{Y^{j,N,\eta}_t-Y^{j}_t}_{L^1_\mu}^2 \right]\de t \leq \frac{c}{N}.
\end{equation}

As a consequence, the empirical mean of the deviated system still converges to the equilibrium curve $\mathfrak{m}$, with the same rate as in the undeviated case.

\begin{lemma}
\label{lemma:approximation of fixed point with eta particles}
    Suppose Assumptions~\ref{ass:kernel} and~\ref{ass:data} hold. Then, for every competitive admissible deviation $\eta$ of player $1$, there exists a constant $c>0$, independent of $N$, such that
    \[
    \int_0^T \E\left[ \left| \frac{1}{N}\sum_{j=1}^N X^{j,N,\eta}_t -\mathfrak{m}_t \right|^2  \right]\de t \leq \frac{c}{N},
    \]
    where $X^{j,N,\eta}_t=g_0(t) + \mu[Y^{j,N,\eta}_t]$.
\end{lemma}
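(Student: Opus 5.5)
We split the error of the empirical mean of the deviated system into three parts, one for each source of error, and bound each by $c/N$. Since $\mathfrak m_t=g_0(t)+\E[\mu[Y^i_t]]$ for every $i$ and $X^{j,N,\eta}_t=g_0(t)+\mu[Y^{j,N,\eta}_t]$, the $g_0$ terms cancel. We then write
\begin{align*}
\frac1N\sum_{j=1}^N X^{j,N,\eta}_t-\mathfrak m_t
&=\frac1N\mu\big[Y^{1,N,\eta}_t\big]-\frac1N\E\big[\mu[Y^1_t]\big]
+\frac1N\sum_{j=2}^N\big(\mu[Y^{j,N,\eta}_t]-\mu[Y^j_t]\big)\\
&\quad+\frac1N\sum_{j=2}^N\big(\mu[Y^j_t]-\E[\mu[Y^j_t]]\big).
\end{align*}
By $(a+b+c)^2\le 3(a^2+b^2+c^2)$, it suffices to bound the time-integrated second moment of each of the three terms by $c/N$.

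\emph{First term.} It carries the factor $1/N^2$ in second moment. By Proposition~\ref{prop:a priori estimates deviated} with $p=2$,
\[
\int_0^T\E\big[\norm{Y^{1,N,\eta}_t}^2_{L^1_\mu}\big]\de t\le c\Big(1+\int_0^T\E[|\eta_t|^2]\de t\Big).
\]
For a competitive deviation, $\int_0^T\E[|\eta_t|^2]\de t\le 2\kappa+2\int_0^T\E[|\alpha^{1,N}_t|^2]\de t$. The last integral is bounded uniformly in $N$ by the admissibility bound following Proposition~\ref{prop:a priori estimates}. The contribution of $\E[\mu[Y^1_t]]$ is bounded by Proposition~\ref{prop:a priori estimates}. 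So this term is $O(N^{-2})$.

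\emph{Second term.} By Jensen's inequality for the average and $|\mu[\varphi]|\le\norm{\varphi}_{L^1_\mu}$, its integrated second moment is at most
\[
\frac{N-1}{N}\cdot\frac1{N-1}\sum_{j=2}^N\int_0^T\E\big[\norm{Y^{j,N,\eta}_t-Y^j_t}^2_{L^1_\mu}\big]\de t,
\]
which is at most $c/N$ by the averaged consequence of \eqref{eq:estimates i,N,eta vs i}, valid for competitive deviations.

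\emph{Third term.} The replicas $Y^j$ are i.i.d., so this is a centered average of independent, square-integrable random variables. Its second moment equals $\frac{N-1}{N^2}\mathrm{Var}(\mu[Y^1_t])\le c/N$, uniformly in $t$, by Proposition~\ref{prop:a priori estimates}. Equivalently, one can invoke Lemma~\ref{lemma:CLT} with $p=2$. Integrating over $[0,T]$ keeps the bound $c/N$.

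\emph{Where care is needed.} Summing the three bounds gives the claim. The only delicate point is that the constants stay independent of $N$ and depend on $\eta$ only through $\kappa$. This is exactly where the competitiveness condition enters: the $\eta$-dependent bounds of Proposition~\ref{prop:a priori estimates deviated} and \eqref{eq:estimates i,N,eta vs i} must be made uniform, and a control on $\int_0^T\E[|\eta_t|^2]\de t$ must be extracted from \eqref{eq:competitive}. Everything else is routine.
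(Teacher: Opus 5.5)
Your proof is correct and follows essentially the paper's route: the same three error sources (the weight $1/N$ of the deviating player, controlled by Proposition~\ref{prop:a priori estimates deviated} and competitiveness; the distance of players $2,\dots,N$ to their replicas, controlled by \eqref{eq:estimates i,N,eta vs i}; and the law of large numbers for the i.i.d.\ replicas), bounded with the same ingredients. The only difference is cosmetic: you normalize every sum by $1/N$ and compare player~$1$ directly with $\E[\mu[Y^1_t]]$, whereas the paper first passes from the $1/N$-average over all players to the $1/(N-1)$-average over players $2,\dots,N$.
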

\begin{proof}
    We can write
    $$
    \int_0^T \E\left[ \left| \frac{1}{N}\sum_{j=1}^N X^{j,N,\eta}_t -\mathfrak{m}_t \right|^2  \right]\de t \leq c\,(\mathrm{\textbf{I}} + \mathrm{\textbf{II}} + \mathrm{\textbf{III}}  ),
    $$
    where
    \begin{align*}
        \mathrm{\textbf{I}} & = \int_0^T\E\left[ \left| \frac{1}{N}\sum_{j=1}^N X^{j,N,\eta}_t -\frac{1}{N-1}\sum_{j=2}^N X^{j,N,\eta}_t  \right|^2  \right]\de t, \\[0.2cm]
        \mathrm{\textbf{II}} & = \int_0^T\E\left[ \left| \frac{1}{N-1}\sum_{j=2}^N X^{j,N,\eta}_t -\frac{1}{N-1}\sum_{j=2}^N X^{j}_t  \right|^2  \right]\de t, \\[0.2cm]
        \mathrm{\textbf{III}} & = \int_0^T\E\left[ \left| \frac{1}{N-1}\sum_{j=2}^N X^{j}_t -\mathfrak{m}_t  \right|^2  \right]\de t.
    \end{align*}
    For the first term,
    \begin{align*}
        \mathrm{\textbf{I}} & = \int_0^T\E\left[ \left| \frac{1}{N} \left\{  \frac{1}{N-1}\sum_{j=2}^N (X^{1,N,\eta}_t-X^{j,N,\eta}_t) \right\} \right|^2  \right]\de t \\[0.2cm]
        & \leq \frac{2}{N^2(N-1)} \sum_{j=2}^N \int_0^T\E\left[ |X^{1,N,\eta}_t|^2+|X^{j,N,\eta}_t|^2 \right]\de t \leq \frac{c}{N^2} \leq \frac{c}{N},
    \end{align*}
    by Proposition~\ref{prop:a priori estimates deviated}, since $\eta$ is competitive and therefore $\int_0^T\E[|\eta_t|^2]\de t$ is bounded uniformly in $N$. For the second term,
    \begin{align*}
        \mathrm{\textbf{II}} & \leq \frac{1}{N-1} \sum_{j=2}^N \int_0^T\E\left[ \norm{Y^{j,N,\eta}_t-Y^j_t}^2_{L^1_\mu} \right]\de t \leq\frac{c}{N},
    \end{align*}
    by \eqref{eq:estimates i,N,eta vs i}. Lastly, the processes $\{X^j\}_{2\leq j\leq N}$ are i.i.d.\ with $\E[X^j_t]=\mathfrak{m}_t$ by the consistency condition, and have bounded second moments by Proposition~\ref{prop:a priori estimates}, so that Lemma~\ref{lemma:CLT} gives
    \begin{align*}
        \mathrm{\textbf{III}} \leq \frac{cT}{N-1}.
    \end{align*}
    Since $N-1\geq N/2$ for $N\geq 2$, combining the three contributions yields the thesis.
\end{proof}

Let us define the following process
\begin{align*}
X^{1,\eta}_t &:= g_0(t) + \int_0^t K(t-s)[\beta(s) %
+ B(X^{1,\eta}_s-\mathfrak{m}_s) + C\eta_s]\de s+ \int_0^t K(t-s)[\gamma(s) + D(X^{1,\eta}_s-\mathfrak{m}_s) + F\eta_s]\de W^{1}_s,
\end{align*}
and let us introduce the corresponding lifted process $Y^{1,\eta}$ with dynamics
\begin{align*}
    Y_t^{1,\eta}(\theta) &= \int_0^t e^{-\theta(t-s)}[\beta(s)+Bg_0(s)+B(\mu[Y_s^{1,\eta}]-\mathfrak{m}_s)+C\eta_s]\de s\\
    &\qquad+ \int_0^t e^{-\theta(t-s)}[\gamma(s) +Dg_0(s) + D(\mu[Y_s^{1,\eta}]-\mathfrak{m}_s) + F\eta_s]\de W_s^1.
\end{align*}
We can then transfer the propagation of chaos to the deviating player itself.
\begin{lemma}
\label{lemma:prop of chaos on player 1 when he plays eta}
    Suppose Assumptions~\ref{ass:kernel} and~\ref{ass:data} hold. Then, for every competitive deviation $\eta$ of player $1$, there exists a constant $c>0$, independent of $N$, such that
    \begin{equation*}
        \int_0^T \E\left[ \norm{Y^{1,N,\eta}_t-Y^{1,\eta}_t}_{L^1_\mu}^2 \right]\de t \leq \frac{c}{N}.
    \end{equation*}
\end{lemma}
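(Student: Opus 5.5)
We will compare the two lifted dynamics driven by the same Brownian motion $W^1$ and the same control $\eta$. Subtracting the equation for $Y^{1,\eta}$ from the first equation in \eqref{eq:particles lifted with beta}, and using $g_0(s)+\mu[Y^{1,\eta}_s]-\mathfrak{m}_s$ in the replica, the control terms $C\eta_s$ and $F\eta_s$ cancel. The difference therefore has the form
$$
Y^{1,N,\eta}_t(\theta)-Y^{1,\eta}_t(\theta)=\int_0^t e^{-\theta(t-s)}\Delta^1_s\de s+\int_0^t e^{-\theta(t-s)}\Delta^2_s\de W^1_s.
$$
Here
$$
\Delta^1_s = B\big(\mu[Y^{1,N,\eta}_s]-\mu[Y^{1,\eta}_s]\big) + B\Big(\mathfrak{m}_s - \tfrac1N\textstyle\sum_{j=1}^N X^{j,N,\eta}_s\Big),
$$
and $\Delta^2_s$ is the same expression with $D$ in place of $B$. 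The point is that the deviating player has no feedback term. Hence no Lipschitz estimate of the type \eqref{eq:Lip di alpha hat MF} is needed, and $\eta$ disappears entirely from the difference.

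Next we apply Proposition~\ref{prop:usual machinery} with $p=2$. Using $|\mu[\varphi]|\le\norm{\varphi}_{L^1_\mu}$, this gives
$$
\E\big[\norm{Y^{1,N,\eta}_t-Y^{1,\eta}_t}^2_{L^1_\mu}\big]\le c\int_0^t\widehat K(t-s)\Big(\E\big[\norm{Y^{1,N,\eta}_s-Y^{1,\eta}_s}^2_{L^1_\mu}\big]+e_N(s)\Big)\de s,
$$
with
$$
e_N(s):=\E\Big[\Big|\tfrac1N\textstyle\sum_{j=1}^N X^{j,N,\eta}_s-\mathfrak{m}_s\Big|^2\Big].
$$
We then use the convolutional Gronwall inequality with the non-positive resolvent $\widehat r$ of $-c\widehat K$, exactly as in the proof of Proposition~\ref{prop:a priori estimates deviated}. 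This bounds the left-hand side by $c\int_0^t(-\widehat r(t-s))\,e_N(s)\de s$. Integrating in $t$ over $[0,T]$ and applying Young's convolution inequality in $L^1(0,T)$ yields
$$
\int_0^T\E\big[\norm{Y^{1,N,\eta}_t-Y^{1,\eta}_t}^2_{L^1_\mu}\big]\de t\le c\,\norm{\widehat r}_{L^1(0,T)}\int_0^T e_N(s)\de s.
$$

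The last integral is at most $c/N$ by Lemma~\ref{lemma:approximation of fixed point with eta particles}, since $\eta$ is competitive, and this concludes the proof. The main obstacle is that the source term $e_N$ is controlled only in time-integrated form, not uniformly in $s$. This is why one cannot take a supremum before applying Gronwall, and why we keep $e_N$ inside the convolution and integrate in time at the end. We must also check that Proposition~\ref{prop:usual machinery} is applicable here, i.e.\ that the differences have finite second moments for each fixed $N$. This follows from Proposition~\ref{prop:a priori estimates deviated}, from the well-posedness of the deviated system, and from the moment bound \eqref{eq:lifted moments} for $Y^{1,\eta}$.
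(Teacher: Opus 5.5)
Your proof is correct and follows essentially the same route as the paper. You identify the same increments $\Delta^1,\Delta^2$: the $\eta$-terms cancel, and $g_0$ is absorbed through $\mu[Y^{j,N,\eta}_s]=X^{j,N,\eta}_s-g_0(s)$. You then apply Proposition~\ref{prop:usual machinery} with $p=2$ and conclude via the convolutional Gronwall inequality, Young's convolution inequality in $L^1(0,T)$ and Lemma~\ref{lemma:approximation of fixed point with eta particles}, exactly as the paper does. Your extra check that the increments have finite second moments for fixed $N$ is a detail the paper leaves implicit; note that it is the fixed-$N$ well-posedness and \eqref{eq:lifted moments} that supply the uniform-in-$t$ bound, not the time-integrated Proposition~\ref{prop:a priori estimates deviated}.
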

\begin{proof}
    By Proposition \ref{prop:usual machinery}, we get
    $$
\mathbb{E}\!\left[ \|Y^{1,N,\eta}_t - Y^{1,\eta}_t\|_{L^1_\mu}^2 \right] \leq c\int_0^t \widehat{K}(t-s)\E\left[|\Delta^1_s|^2+|\Delta^2_s|^2\right]\de s,
$$
where
\begin{align*}
    \Delta^1_s & = B\left(\mu[Y^{1,N,\eta}_s]-\mu[Y^{1,\eta}_s]\right) + B\left( \mathfrak{m}_s-\frac{1}{N}\sum_{j=1}^N \mu[Y^{j,N,\eta}_s] \right) -Bg_0(s),\\
    \Delta^2_s & = D\left(\mu[Y^{1,N,\eta}_s]-\mu[Y^{1,\eta}_s]\right) + D\left( \mathfrak{m}_s-\frac{1}{N}\sum_{j=1}^N \mu[Y^{j,N,\eta}_s] \right) -Dg_0(s).
\end{align*}
Recalling that $\mu[Y^{j,N,\eta}_s]=X^{j,N,\eta}_s-g_0(s)$, we obtain
\[
\mathbb{E}\!\left[ \|Y^{1,N,\eta}_t - Y^{1,\eta}_t\|_{L^1_\mu}^2 \right] \leq c\int_0^t \widehat{K}(t-s)\left(\mathbb{E}\!\left[ \|Y^{1,N,\eta}_s - Y^{1,\eta}_s\|_{L^1_\mu}^2 \right] + \E\left[\Big|\frac{1}{N}\sum_{j=1}^N X^{j,N,\eta}_s - \mathfrak{m}_s\Big|^2\right]\right)\de s,
\]
and the conclusion follows by the convolutional Gronwall inequality, Young's convolution inequality in $L^1(0,T)$, and Lemma \ref{lemma:approximation of fixed point with eta particles}.
\end{proof}

As a last step towards the $\eps$-Nash property, we denote by
\begin{equation}
    J^{1,N}(\eta,\dots,\alpha^{N,N}) := \E \int_0^T \left\{ f(X^{1,N,\eta}_t,\eta_t) + Q\left(\frac{1}{N}\sum_{j=1}^N X^{j,N,\eta}_t\right)^2 + \frac{2L}{N}\sum_{j=1}^N X^{j,N,\eta}_t \right\}\de t,
\end{equation}
the cost functional of player 1 when he plays strategy $\eta$ and by
\begin{equation}
    J^\infty(\eta) = \E\left[ \int_0^T \left( f(X^{1,\eta}_t,\eta_t) + Q\,\mathfrak{m}_t^2 + 2L\,\mathfrak{m}_t \right)\de t\right],
\end{equation}
the cost functional of the mean field problem when player 1 plays $\eta$.

\smallskip

With these estimates at hand, we can compare the two costs also when player~$1$ deviates, which is the second and last ingredient of the argument.
\begin{lemma}
    Suppose Assumptions~\ref{ass:kernel} and~\ref{ass:data} hold. Then, for every competitive deviation $\eta$ of player $1$, there exists a constant $c>0$, independent of $N$, such that
    \begin{equation}
        |J^{1,N}(\eta,\alpha^{2,N},\dots,\alpha^{N,N})-J^\infty(\eta)|\leq \frac{c}{\sqrt{N}}.
    \end{equation}
\end{lemma}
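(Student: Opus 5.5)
The plan mirrors the proof of Lemma~\ref{lemma:comparison of J^infty and J^i,N on optimal alpha}, replacing the undeviated estimates with those for the deviated system. Since the control $\eta$ is the same in both costs, the terms $R\eta_t^2$ cancel exactly. We are therefore left with
$$
|J^{1,N}(\eta,\alpha^{2,N},\dots,\alpha^{N,N})-J^\infty(\eta)| \leq c\,(\mathrm{\textbf{I}}+\mathrm{\textbf{II}}+\mathrm{\textbf{III}}),
$$
where $\mathrm{\textbf{I}}=\E\int_0^T\big(|(X^{1,N,\eta}_t)^2-(X^{1,\eta}_t)^2|+|X^{1,N,\eta}_t-X^{1,\eta}_t|\big)\de t$, while $\mathrm{\textbf{II}}$ and $\mathrm{\textbf{III}}$ are as in that lemma, with $X^{j,N}$ replaced by $X^{j,N,\eta}$.

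\emph{Terms $\mathrm{\textbf{III}}$ and $\mathrm{\textbf{II}}$.} By the Cauchy--Schwarz inequality in $\de t\otimes\de\Prob$ and Lemma~\ref{lemma:approximation of fixed point with eta particles}, $\mathrm{\textbf{III}}\leq \sqrt{T}\,(c/N)^{1/2}= c/\sqrt N$. For $\mathrm{\textbf{II}}$, factor the difference of squares and apply Cauchy--Schwarz. The first factor is again controlled by Lemma~\ref{lemma:approximation of fixed point with eta particles}. For the second factor, $\int_0^T\E|\mathfrak m_t+\frac1N\sum_j X^{j,N,\eta}_t|^2\de t$, I would use Jensen's inequality, the boundedness of $\mathfrak m$ and Proposition~\ref{prop:a priori estimates deviated} with $p=2$. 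This gives the bound $c\,(1+\int_0^T\E|\eta_t|^2\de t)$.

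\emph{Term $\mathrm{\textbf{I}}$.} Since $|X^{1,N,\eta}_t-X^{1,\eta}_t|\leq\norm{Y^{1,N,\eta}_t-Y^{1,\eta}_t}_{L^1_\mu}$, Lemma~\ref{lemma:prop of chaos on player 1 when he plays eta} and Cauchy--Schwarz give the linear part of order $c/\sqrt N$. For the quadratic part, I would write the difference of squares as the difference times the sum and apply Cauchy--Schwarz. This requires a bound on $\int_0^T\E|X^{1,\eta}_t|^2\de t$, which does not follow from the earlier results as stated. I would obtain it by applying Proposition~\ref{prop:usual machinery} to $Y^{1,\eta}$, whose coefficients satisfy $|\Delta^1_s|+|\Delta^2_s|\leq c(1+\norm{Y^{1,\eta}_s}_{L^1_\mu}+|\eta_s|)$. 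The convolutional Gronwall inequality and Young's inequality, exactly as in Proposition~\ref{prop:a priori estimates deviated}, then give $\int_0^T\E\norm{Y^{1,\eta}_t}^2_{L^1_\mu}\de t\leq c(1+\int_0^T\E|\eta_t|^2\de t)$.

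\emph{Main obstacle: uniformity in $\eta$.} All constants must be independent of $N$, and $\int_0^T\E|\eta_t|^2\de t$ appears in the bounds above. This quantity is bounded uniformly in $N$ precisely because $\eta$ is competitive: \eqref{eq:competitive} together with the fourth-moment bound on $\alpha^{1,N}$ from Proposition~\ref{prop:a priori estimates} gives
$$
\int_0^T\E|\eta_t|^2\de t\leq 2\kappa+c .
$$
The constant $c$ therefore depends on $\kappa$ but not on $N$. Only second moments are used, which is consistent with the time-integrated form of the estimates. Collecting the three terms yields the claimed bound $c/\sqrt N$.
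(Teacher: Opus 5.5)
Your proposal is correct and follows the paper's proof essentially step by step. You use the same splitting into the state term, the squared-mean term and the linear mean term, each handled by Cauchy--Schwarz together with Lemma~\ref{lemma:approximation of fixed point with eta particles}, Lemma~\ref{lemma:prop of chaos on player 1 when he plays eta} and Proposition~\ref{prop:a priori estimates deviated}. You also spell out two points the paper leaves implicit: the second-moment bound for $X^{1,\eta}$, which the paper only says follows ``from the same argument'', and the uniform-in-$N$ bound $\int_0^T\E|\eta_t|^2\de t\leq 2\kappa+c$ coming from competitiveness.
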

\begin{proof}
    The proof is, mutatis mutandis, the same as the one in Lemma \ref{lemma:comparison of J^infty and J^i,N on optimal alpha} using Lemma \ref{lemma:approximation of fixed point with eta particles} and Lemma \ref{lemma:prop of chaos on player 1 when he plays eta}. Indeed,
    $$
    |J^\infty(\eta)-J^{1,N}(\eta,\dots,\alpha^{N,N})| \leq c(
    \mathrm{\textbf{I}} + \mathrm{\textbf{II}} + \,\mathrm{\textbf{III}}),
    $$
    where
    \begin{align*}
        \mathrm{\textbf{I}} & =\E\left[ \int_0^T\left|f(X^{1,\eta}_t,\eta_t)-f(X^{1,N,\eta}_t,\eta_t)\right|\de t\right], \\[0.2cm]
        \mathrm{\textbf{II}} & = \E\left[ \int_0^T\left| \mathfrak{m}_t^2 -  \left(\frac{1}{N}\sum_{j=1}^N X^{j,N,\eta}_t\right)^2\right|\de t\right], \\[0.2cm]
        \mathrm{\textbf{III}} &=  \E\left[ \int_0^T\left| \mathfrak{m}_t - \frac{1}{N}\sum_{j=1}^N X^{j,N,\eta}_t \right|\de t\right].
    \end{align*}
    We start with the third, for which it holds
    \begin{align*}
        \mathrm{\textbf{III}}  \leq \frac{c}{\sqrt N},
    \end{align*}
    by the Cauchy-Schwarz inequality and Lemma \ref{lemma:approximation of fixed point with eta particles}. Now,
    \begin{align*}
        \mathrm{\textbf{II}} = \E\left[ \int_0^T\left| \mathfrak{m}_t - \frac{1}{N}\sum_{j=1}^N X^{j,N,\eta}_t \right|\left| \mathfrak{m}_t + \frac{1}{N}\sum_{j=1}^N X^{j,N,\eta}_t \right|\de t\right] \leq \frac{c}{\sqrt N},
    \end{align*}
    by Cauchy-Schwarz inequality, Lemma \ref{lemma:approximation of fixed point with eta particles} and the a priori bounds of Proposition~\ref{prop:a priori estimates deviated}. At last,
    \begin{align*}
        \mathrm{\textbf{I}} \leq c\,(\mathrm{\textbf{I}}_a+\mathrm{\textbf{I}}_b),
    \end{align*}
    where
    \begin{align*}
        \mathrm{\textbf{I}}_a  = \E\left[ \int_0^T |(X^{1,\eta}_t)^2-(X^{1,N,\eta}_t)^2|\de t\right], \qquad 
        \mathrm{\textbf{I}}_b  = \E\left[ \int_0^T |X^{1,\eta}_t-X^{1,N,\eta}_t|\de t\right].
    \end{align*}
    Now,
    \begin{align*}
        \mathrm{\textbf{I}}_b \leq \E\left[ \int_0^T \norm{Y^{1,\eta}_t-Y^{1,N,\eta}_t}_{L^1_\mu}\de t\right] \leq \frac{c}{\sqrt N},
    \end{align*}
    by the Cauchy-Schwarz inequality and Lemma \ref{lemma:prop of chaos on player 1 when he plays eta}. We deal with the other term in the same way after splitting the difference of the squares, using Cauchy-Schwarz inequality and the a priori bounds of Proposition~\ref{prop:a priori estimates deviated}, together with the analogous bound for $X^{1,\eta}$, which follows from the same argument.
\end{proof}

\begin{remark}
\label{rmk:competitive}
    The approximate Nash property need only be established for competitive deviations, in the sense of \eqref{eq:competitive}. Indeed, set $u:=\E\int_0^T|\eta_t|^2\de t$. Since $Q \in S^d_+$ and $R-\lambda I_q \in S^q_+$ by Assumption~\ref{ass:data}(iii), both the running cost and the interaction term are bounded below by their linear parts, so that
    $$
    J^{1,N}(\eta,\dots,\alpha^{N,N}) \geq \lambda\, u - 2|L|\,\E\!\int_0^T\left( |X^{1,N,\eta}_t| + \Big|\frac{1}{N}\sum_{j=1}^N X^{j,N,\eta}_t\Big| \right)\de t \geq \frac{\lambda}{2}\,u - c,
    $$
    where the last inequality follows from the Cauchy-Schwarz inequality and Proposition~\ref{prop:a priori estimates deviated}, which bound the two expectations by $c(1+\sqrt{u})$, the term of order $\sqrt{u}$ being then absorbed into $\lambda u$ by Young's inequality; the constant $c$ is independent of $N$. Since $J^{1,N}(\alpha^{1,N},\dots,\alpha^{N,N})$ is bounded above uniformly in $N$ by Proposition~\ref{prop:a priori estimates}, while the lower bound above grows linearly in $u$, there exists a threshold $u_0$, independent of $N$, such that any deviation with $u > u_0$ satisfies $J^{1,N}(\eta,\dots,\alpha^{N,N}) \geq J^{1,N}(\alpha^{1,N},\dots,\alpha^{N,N})$, and the corresponding $\eps_N$-Nash inequality holds trivially. It is therefore enough to consider deviations with $u \leq u_0$; for these,
    $$
    \int_0^T \E\left[ |\eta_t-\alpha^{1,N}_t|^2 \right]\de t \leq 2u_0 + 2\int_0^T \E\left[ |\alpha^{1,N}_t|^2 \right]\de t \leq \kappa
    $$
    for some $\kappa$ independent of $N$, the last bound following again from Proposition~\ref{prop:a priori estimates} and the affine form of the equilibrium feedback. Hence such deviations are competitive. This is the argument of \cite[Section~4]{CarmonaDelarue2013}, adapted to the present setting.
\end{remark}
We can now conclude. Set $\eps_N := c/\sqrt{N}$, where $c$ is the larger of the two constants appearing in the comparison lemmas above. For a competitive deviation $\eta$ of player~$1$,
\begin{align*}
    J^{1,N}(\alpha^{1,N},\dots,\alpha^{N,N}) \leq J^\infty(\alpha^1) + \eps_N \leq J^\infty(\eta) + \eps_N \leq J^{1,N}(\eta,\dots,\alpha^{N,N}) + 2\eps_N,
\end{align*}
where the first and third inequalities are the two comparison lemmas, and the second is the optimality of the mean field control for $J^\infty$ (Proposition~\ref{prop:best-response}). By symmetry, the same holds for every player, and the claim follows with $2c$ in place of $c$. We have thus proved the following:

\epsnash*
\begin{remark}
\label{rmk:open loop}
    Theorem~\ref{thm:eps-nash} also holds in the open loop sense of \cite[Section~2.1.2]{CarmonaDelarue}, in which the players $j\neq i$ keep the processes $\alpha^{j,N}$ when player $i$ deviates. In that case $\alpha^{i,N,\eta}=\alpha^{i,N}$ for $i\geq2$: the second estimate in \eqref{eq:non-deviating controls} holds trivially, and the first one is replaced by $|\alpha^{i,N,\eta}_t|\leq c\big(1+\norm{Y^{i,N}_t}_{L^1_\mu}\big)$, whose moments are bounded by Proposition~\ref{prop:a priori estimates}. With these changes, the proofs of this section apply verbatim.
\end{remark}

\appendix
\section{Tools for Volterra Kernels}
\label{appendix:volterra tools}
This appendix collects classical results on scalar convolution Volterra integral equations, following the treatment of
\cite{Gripenberg-Londed-Staffans}.
All kernels are assumed to be of \emph{convolution type}, i.e.\ they depend only
on the difference of their arguments, and the underlying interval is
$\mathbb{R}_{+}=[0,+\infty)$.

We consider the convolution Volterra integral equation
\begin{equation}
  \label{eq:VIE}
  x(t) + \int_{0}^{t} K(t-s)\, x(s)\,\mathrm{d}s= f(t), \quad t \geq 0,
\end{equation}
where $K:\mathbb{R}_{+}\to\mathbb{R}$ is a locally integrable kernel and
$f:\mathbb{R}_{+}\to\mathbb{R}$ is a given forcing function.
Throughout this appendix we write $K*x$ for the convolution
$(K*x)(t)=\int_{0}^{t}K(t-s)x(s)\,\de s$.

The central object in the theory of convolution Volterra equations is the
\emph{resolvent}, which encodes the solution operator for Equation
\eqref{eq:VIE} independently of the forcing term $f$.
 
\begin{definition}[Resolvent kernel]
\label{def:resolvent}
Let $K\in L^{1}_{\mathrm{loc}}(\mathbb{R}_{+})$.
A function $r\in L^{1}_{\mathrm{loc}}(\mathbb{R}_{+})$ is called the
\emph{resolvent kernel} of $K$ if it satisfies the
\emph{resolvent equation}
\begin{equation}
  \label{eq:resolvent}
  r(t) + \int_{0}^{t} K(t-s)\,r(s)\,\mathrm{d}s= r(t) + \int_{0}^{t} r(t-s)\,K(s)\,\mathrm{d}s = K(t),
       \qquad t \geq 0.
\end{equation}
Equivalently, in convolution notation, $K - r = K*r =  r*K$.
\end{definition}
The resolvent kernel always exists and is unique in
$L^{1}_{\mathrm{loc}}(\mathbb{R}_{+})$ for every locally integrable kernel $K$(see \cite[Theorem~2.3.1]{Gripenberg-Londed-Staffans}). Its importance lies in the following representation theorem.

\begin{theorem}
\label{thm:variation_constants}
Let $K\in L^{1}_{\mathrm{loc}}(\mathbb{R}_{+})$ and let $r$ be its resolvent kernel. For any $f\in L^{1}_{\mathrm{loc}}(\mathbb{R}_{+})$, the unique solution of
\eqref{eq:VIE} in $L^{1}_{\mathrm{loc}}(\mathbb{R}_{+})$ is given by
\begin{equation}
  \label{eq:var_constants}
  x(t) = f(t) - \int_{0}^{t} r(t-s)\,f(s)\,\mathrm{d}s, \qquad t\geq 0.
\end{equation}
\end{theorem}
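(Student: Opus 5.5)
The statement is the classical variation of constants formula, and I would prove it in two parts: first that \eqref{eq:var_constants} defines a solution of \eqref{eq:VIE}, then that \eqref{eq:VIE} has at most one solution in $L^1_{\mathrm{loc}}(\mathbb{R}_+)$. All manipulations are done at the level of convolution identities in $L^1_{\mathrm{loc}}$. The basic tools are that convolution is associative and commutative on $L^1_{\mathrm{loc}}(\mathbb{R}_+)$ (Fubini--Tonelli on each $[0,T]$), and that $\norm{a*b}_{L^1(0,T)}\leq\norm{a}_{L^1(0,T)}\norm{b}_{L^1(0,T)}$. Together these guarantee that every term written below is a well-defined element of $L^1_{\mathrm{loc}}$.

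\emph{Existence.} Set $x:=f-r*f\in L^1_{\mathrm{loc}}$. I compute
\[
x+K*x=f-r*f+K*f-K*(r*f)=f-r*f+K*f-(K*r)*f .
\]
By the resolvent equation \eqref{eq:resolvent}, $K*r=K-r$. Hence $(K*r)*f=K*f-r*f$, and the right-hand side collapses to $f$. So $x$ solves \eqref{eq:VIE} almost everywhere, which is the natural meaning of the equation in $L^1_{\mathrm{loc}}$.

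\emph{Uniqueness.} Let $x$ be any $L^1_{\mathrm{loc}}$ solution, so that $x+K*x=f$. I convolve this identity with $r$ and use the other form of the resolvent equation, $r*K=K-r$:
\[
r*f=r*x+(r*K)*x=r*x+K*x-r*x=K*x=f-x .
\]
Hence $x=f-r*f$, so every solution coincides with \eqref{eq:var_constants}. Equivalently, the difference of two solutions satisfies $z+K*z=0$, and the same computation gives $z=-r*0=0$.

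The only step needing care is the justification of associativity and commutativity of the triple convolutions. These hold on each bounded interval $[0,T]$ by Fubini's theorem, since $\int_0^T\!\int_0^t\!\int_0^s|K(t-s)||r(s-u)||f(u)|\,\de u\,\de s\,\de t\leq \norm{K}_{L^1(0,T)}\norm{r}_{L^1(0,T)}\norm{f}_{L^1(0,T)}<\infty$. The existence of $r$ itself is taken from \cite[Theorem~2.3.1]{Gripenberg-Londed-Staffans}, as recalled before the statement, so beyond this Fubini bookkeeping there is no real obstacle.
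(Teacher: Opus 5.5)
Your proof is correct. The paper gives no proof of this statement and only recalls it from \cite{Gripenberg-Londed-Staffans}; your argument is essentially the standard convolution-algebra proof found in that reference. Existence follows by substituting $f-r*f$ and using $K*r=K-r$, uniqueness by convolving $x+K*x=f$ with $r$ and using $r*K=K-r$, and the Fubini--Young bound you give justifies associativity, with the equation and formula understood almost everywhere.
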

A class of kernels that plays a distinguished role both analytically and in
applications is that of \emph{completely monotone} functions.

\begin{definition}[Completely monotone function]
\label{def:CM}
A function $K:(0,+\infty)\to\mathbb{R}$ is called
\emph{completely monotone} if it is of class
$\rmC^{\infty}(0,+\infty)$ and satisfies
\begin{equation}
  \label{eq:CM}
  (-1)^{n}\, \frac{\d^n}{\d t^n}K(t) \;\geq\; 0
  \qquad \text{for all } t>0 \text{ and all } n\in\mathbb{N}.
\end{equation}
\end{definition}
In other words, $K$ is non-negative, non-increasing, convex, and all its
higher derivatives alternate in sign.
The fundamental characterisation of completely monotone functions is given by the Bernstein theorem.
\begin{theorem}
\label{thm:BW}
A function $K:(0,+\infty)\to\mathbb{R}$ is completely monotone if and only if it
can be represented as the Laplace transform of a non-negative Borel measure
$\mu$ on $[0,+\infty)$:
\begin{equation}
  \label{eq:BW}
  K(t) \;=\; \int_0^\infty e^{-t\theta}\,\mu(\mathrm{d}\theta),
  \qquad t>0.
\end{equation}
\end{theorem}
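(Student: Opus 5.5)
The statement is Bernstein's theorem. I would prove the two directions separately.

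\emph{Sufficiency.} Assume $K(t)=\int_0^\infty e^{-t\theta}\mu(\de\theta)$ is finite for $t>0$. For fixed $t_0>0$, on $[t_0,\infty)$ we have
$$
\theta^n e^{-t\theta}\leq \theta^n e^{-t_0\theta/2}\,e^{-t_0\theta/2}\leq c_{n,t_0}\,e^{-t_0\theta/2}.
$$
The right-hand side is $\mu$-integrable because $K(t_0/2)<\infty$. Dominated convergence then allows differentiation under the integral $n$ times, which gives
$$
(-1)^n K^{(n)}(t)=\int_0^\infty \theta^n e^{-t\theta}\mu(\de\theta)\geq0.
$$
In particular $K\in\rmC^\infty(0,\infty)$.

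\emph{Necessity.} This is the substantial direction. I would use the Hausdorff moment route via the compactness argument of Bernstein--Widder. First, complete monotonicity implies that $K$ is non-negative and non-increasing, so the limit $K(\infty):=\lim_{t\to\infty}K(t)\geq0$ exists. Likewise each $(-1)^nK^{(n)}$ is non-negative and non-increasing. Since each $K^{(n)}$ has a finite limit at infinity and is monotone, it follows that $K^{(n)}(t)\to0$ for $n\geq1$.

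Next, fix $a>0$ and apply Taylor's formula with integral remainder at a point $t$, expanding around $s\to\infty$. Using the sign pattern, one obtains for $n\geq1$ the representation
$$
K(t)-K(\infty)=\int_t^\infty \frac{(s-t)^{n-1}}{(n-1)!}(-1)^nK^{(n)}(s)\,\de s .
$$
The boundary terms vanish because $s^kK^{(k)}(s)\to0$, which follows from monotonicity of $(-1)^kK^{(k)}$ and integrability of $K^{(k+1)}$. Now substitute $s=n/\theta$. This yields
$$
K(t)-K(\infty)=\int_0^\infty \Big(1-\frac{t\theta}{n}\Big)_+^{n-1}\,\sigma_n(\de\theta),
$$
with $\sigma_n$ the non-negative measure $\sigma_n(\de\theta)=\frac{(-1)^nK^{(n)}(n/\theta)(n/\theta)^{n+1}}{n!\,\theta}\,\de\theta$ up to the exact constant. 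The kernel $\big(1-t\theta/n\big)_+^{n-1}$ converges to $e^{-t\theta}$ locally uniformly.

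\emph{Passage to the limit (main obstacle).} Evaluating at a fixed $t_0$ bounds the masses of the measures $e^{-t_0\theta}$-weighted, roughly $(1-t_0\theta/n)_+^{n-1}\sigma_n$, uniformly in $n$. Helly's selection theorem then gives a vaguely convergent subsequence on $[0,\infty]$. The delicate points are the following:
\begin{itemize}
\item controlling mass escaping to $\theta=\infty$, which would produce a term behaving like a multiple of $\mathbf 1_{\{t=0\}}$, hence irrelevant for $t>0$; this is handled by weighting with $t_0<t$;
\item justifying the uniform convergence of the kernels against the tails.
\end{itemize}
I would handle both by comparing $(1-t\theta/n)^{n-1}_+$ with $e^{-t\theta}$ up to the factor $e^{-(t-t_0)\theta}$, for $t>t_0$. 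The limit measure $\nu$ then gives $K(t)-K(\infty)=\int_{(0,\infty)}e^{-t\theta}\nu(\de\theta)$, and setting $\mu:=\nu+K(\infty)\delta_0$ gives the claimed representation. Uniqueness of $\mu$, though not required, follows from uniqueness of the Laplace transform.

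Alternatively, one could cite \cite[Theorem 2.5]{Gripenberg-Londed-Staffans} directly, as the paper does, since this is classical.
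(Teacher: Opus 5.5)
The paper gives no proof of this statement. It quotes the classical Bernstein theorem, with the reference \cite[Theorem 2.5]{Gripenberg-Londed-Staffans} given in Assumption~\ref{ass:kernel}, so your closing suggestion is exactly what the paper does. Your reconstruction follows the classical Bernstein--Widder argument, and the sufficiency part is correct. In the necessity part, however, the limit argument as you set it up stops short of the statement. Write $\nu_n^{t_0}:=(1-t_0\theta/n)_+^{n-1}\sigma_n$. The ratio $\phi_n(\theta):=\big((1-t\theta/n)/(1-t_0\theta/n)\big)_+^{n-1}$ does converge to $e^{-(t-t_0)\theta}$ uniformly on $[0,\infty)$: this holds on compacts, and the tails are controlled by $\phi_n(\theta)\le(1-(t-t_0)\theta/n)_+^{n-1}$. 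But this yields $K(t)-K(\infty)=\int e^{-t\theta}\,\nu_{t_0}(\mathrm{d}\theta)$ only for $t>t_0$. Moreover, the limit measure $\nu_{t_0}$ depends a priori on $t_0$ and on the extracted subsequence. If $K(0+)<\infty$ you could take $t_0=0$, but for singular kernels, the case the paper cares about, you cannot, and a gluing step is needed. You can use any of the following:
\begin{itemize}
\item uniqueness of the Laplace transform on half-lines, which shows that $\nu_{t_0}$ does not depend on $t_0$ (so, contrary to your remark, uniqueness is actually used);
\item a diagonal subsequence along $t_0=1/k$, for which the same uniform convergence gives the consistency relation $\nu^{t_1}(\mathrm{d}\theta)=e^{-(t_1-t_0)\theta}\nu^{t_0}(\mathrm{d}\theta)$ for $t_1>t_0$;
\item a single vague limit of the $\sigma_n$ on $[0,\infty)$ (they are locally uniformly bounded), with the $t_0$-weighting used only to make the tails uniformly small.
\end{itemize}

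There are also two smaller slips. First, the vanishing of the boundary terms does not follow from the monotonicity of $(-1)^kK^{(k)}$ and the integrability of $K^{(k+1)}$. Those facts by themselves only give $K^{(k)}(s)\to0$: a non-increasing $g\ge0$ such as $g(s)=s^{-1/2}$ has integrable derivative, yet $s\,g(s)\not\to0$. What does work is the identity at level $k$. It gives $\int^\infty s^{k-1}|K^{(k)}(s)|\,\mathrm{d}s<\infty$, and hence, by monotonicity, $s^k|K^{(k)}(s)|\le c_k\int_{s/2}^s u^{k-1}|K^{(k)}(u)|\,\mathrm{d}u\to0$. More simply, you can avoid boundary terms altogether: insert $(-1)^kK^{(k)}(s)=\int_s^\infty(-1)^{k+1}K^{(k+1)}(u)\,\mathrm{d}u$ and apply Tonelli's theorem at each step of the induction. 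Second, the substitution $s=n/\theta$ produces the density $\sigma_n(\mathrm{d}\theta)=\frac{1}{n!}\big(\frac{n}{\theta}\big)^{n+1}(-1)^nK^{(n)}(n/\theta)\,\mathrm{d}\theta$. Your expression carries an extra factor $\theta^{-1}$, which is not a constant. With these repairs the strategy is sound.
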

This is precisely the representation postulated in Assumption~\ref{ass:kernel}: the measure $\mu$ generates the Markovian lifting of the Volterra dynamics used throughout the paper.

Completely monotone kernels enjoy strong stability properties.
In particular, the class is closed under positive linear combinations and
products.

\begin{proposition}
\label{prop:CM_closure}
Let $a,b:(0,+\infty)\to\mathbb{R}$ be completely monotone.
\begin{enumerate}
  \item For any $\alpha,\beta\geq 0$, the function $\alpha a + \beta b$ is
        completely monotone.
  \item The pointwise product $a\cdot b$ is completely monotone.
\end{enumerate}
\end{proposition}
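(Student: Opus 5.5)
The plan is to argue directly from Definition~\ref{def:CM}, treating the two claims separately; neither needs Bernstein's theorem, although Theorem~\ref{thm:BW} offers a quick alternative for (1).

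For (1), I will note that $\alpha a+\beta b$ is in $\rmC^\infty(0,+\infty)$ because both summands are. The $n$-th derivative is linear, so
\[
(-1)^n\tfrac{\d^n}{\d t^n}(\alpha a+\beta b)=\alpha(-1)^n a^{(n)}+\beta(-1)^n b^{(n)}.
\]
Each term on the right is non-negative because $\alpha,\beta\geq0$. Alternatively, if $a$ and $b$ are the Laplace transforms of $\mu_a$ and $\mu_b$, then $\alpha a+\beta b$ is the Laplace transform of the non-negative measure $\alpha\mu_a+\beta\mu_b$, and Theorem~\ref{thm:BW} gives the claim.

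For (2), smoothness is again immediate. I will then apply the Leibniz rule:
\[
(ab)^{(n)}=\sum_{k=0}^n\binom nk a^{(k)}b^{(n-k)}.
\]
Multiplying by $(-1)^n=(-1)^k(-1)^{n-k}$ gives
\[
(-1)^n(ab)^{(n)}=\sum_{k=0}^n\binom nk\big[(-1)^ka^{(k)}\big]\big[(-1)^{n-k}b^{(n-k)}\big].
\]
This is a sum of products of non-negative factors, so it is non-negative.

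There is essentially no obstacle. The only delicate point would arise if one insisted on the measure-theoretic route for the product, where $\mu_{ab}$ is the convolution $\mu_a*\mu_b$ on $[0,\infty)$. Under Assumption~\ref{ass:kernel}, the measures are Radon but need not be finite, so one would have to check that this convolution is well defined and that Tonelli applies to $e^{-t(\theta+\tau)}$. The Leibniz argument avoids this entirely, so I will use it.
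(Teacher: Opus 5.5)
Your proof is correct and complete. The paper gives no proof of this proposition: it is recorded in Appendix~\ref{appendix:volterra tools} as a classical fact, following \cite{Gripenberg-Londed-Staffans}. Your argument is the standard one: linearity of differentiation with $\alpha,\beta\geq0$ for (1), and the Leibniz rule combined with the splitting $(-1)^n=(-1)^k(-1)^{n-k}$ for (2). It uses only Definition~\ref{def:CM}.

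One small correction to your closing remark: the Bernstein route for (2) is not actually delicate. Since $a(t)=\int_0^\infty e^{-t\theta}\mu_a(\de\theta)<\infty$ for every $t>0$, one has $\mu_a([0,R])\leq e^{tR}a(t)<\infty$, so $\mu_a$ is Radon, and likewise $\mu_b$. The set $\{(\theta,\tau):\theta+\tau\leq R\}$ is contained in $[0,R]^2$, so $\mu_a*\mu_b$ is again a Radon measure on $[0,\infty)$, even when both measures have infinite total mass. Tonelli applies directly to the non-negative integrand $e^{-t(\theta+\tau)}$, so $ab$ is the Laplace transform of $\mu_a*\mu_b$. Both routes work.
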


The following proposition, which is \cite[Lemma 2.1]{Hamaguchi}, gives a connection between the properties of a completely monotone kernel $K$ and the properties of its associated Bernstein's measure $\mu$.

\begin{proposition}
\label{prop:mu_properties}
    Let $K:(0,\infty)\to [0,\infty)$ be a completely monotone kernel and let $\mu$ be the corresponding measure given by Bernstein's theorem. Then:
    \begin{enumerate}
        \item $K$ is singular (i.e. $\lim_{t \to 0^+}K(t) = + \infty$) if and only if $\mu([1,+\infty))=+\infty$;
    \item Let $\alpha \in (0,1]$. Then, $\int_0^1 t^{\alpha-1}K(t)\de t < \infty$ if and only if $\int_1^\infty \theta^{-\alpha}\mu(\de\theta)<\infty$. Moreover, these conditions imply $\int_0^1 K(t)^{1/\alpha}\de t < \infty$.
    \end{enumerate}
\end{proposition}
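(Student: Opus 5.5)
The plan is to derive all statements from the Bernstein representation $K(t)=\int_0^\infty e^{-t\theta}\mu(\de\theta)$, using only monotone convergence and Tonelli's theorem. For the final integrability claim we add a Hardy-type argument that exploits the monotonicity of $K$.

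\emph{Item (1).} As $t\downarrow0$, $e^{-t\theta}\uparrow1$ for every $\theta$. By monotone convergence, $\lim_{t\to0^+}K(t)=\mu([0,\infty))$. So $K$ is singular if and only if $\mu$ has infinite total mass. Moreover, $\mu([0,1))$ is always finite, since
$$
K(1)\ge\int_{[0,1)}e^{-\theta}\mu(\de\theta)\ge e^{-1}\mu([0,1)),
$$
and $K(1)<\infty$. Hence infinite total mass is equivalent to $\mu([1,\infty))=+\infty$.

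\emph{Item (2), equivalence.} By Tonelli,
$$
\int_0^1t^{\alpha-1}K(t)\de t=\int_0^\infty\phi(\theta)\,\mu(\de\theta),\qquad \phi(\theta):=\int_0^1t^{\alpha-1}e^{-t\theta}\de t.
$$
We bound $\phi$ two-sidedly on each range of $\theta$.
\begin{itemize}
\item For $\theta\le1$: $e^{-1}/\alpha\le\phi(\theta)\le1/\alpha$.
\item For $\theta\ge1$: the substitution $u=t\theta$ gives $\phi(\theta)=\theta^{-\alpha}\int_0^\theta u^{\alpha-1}e^{-u}\de u$, hence
$$
\theta^{-\alpha}\int_0^1u^{\alpha-1}e^{-u}\de u\le\phi(\theta)\le\Gamma(\alpha)\theta^{-\alpha}.
$$
\end{itemize}
Since $\mu([0,1])<\infty$ (argued exactly as in item (1)), the left-hand side is finite if and only if $\int_1^\infty\theta^{-\alpha}\mu(\de\theta)<\infty$.

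\emph{Item (2), integrability of $K^{1/\alpha}$.} This is the only step where a naive approach fails. Monotonicity alone only gives $K(t)\lesssim t^{-\alpha}$, hence $K^{1/\alpha}\lesssim t^{-1}$, which is not integrable. Instead we set $F(t):=\int_0^ts^{\alpha-1}K(s)\de s$, which is finite, nondecreasing, with $F(0)=0$. Since $K$ is nonincreasing,
$$
F(t)\ge K(t)\,t^\alpha/\alpha,\qquad\text{so}\qquad K(t)\le\alpha t^{-\alpha}F(t).
$$
With $p:=1/\alpha\ge1$, write $K^p=K\cdot K^{p-1}$ and apply this bound only to the factor $K^{p-1}$. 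Noting that $\alpha(p-1)=1-\alpha$, we obtain
$$
\int_0^1K(t)^p\de t\le\alpha^{p-1}\int_0^1t^{\alpha-1}K(t)F(t)^{p-1}\de t=\alpha^{p-1}\int_0^1F'(t)F(t)^{p-1}\de t=\frac{\alpha^{p-1}}{p}F(1)^p<\infty.
$$
The chain rule applies here because $F$ is absolutely continuous. This closes the argument; the case $\alpha=1$ is trivial.
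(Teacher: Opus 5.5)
Your proof is correct and complete. The paper gives no argument of its own for this proposition; it imports it from \cite[Lemma~2.1]{Hamaguchi}. Your proposal is therefore a self-contained proof rather than an alternative to one in the text.

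Item (1) and the equivalence in (2) are the natural monotone convergence and Tonelli computations. The two facts that make them work are both correctly established: $\mu([0,1])\le e\,K(1)<\infty$, and $\int_0^1 t^{\alpha-1}e^{-t\theta}\de t$ is comparable to $\theta^{-\alpha}$ for $\theta\ge1$.

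For the last claim, your Hardy-type argument uses only that $K$ is nonnegative and nonincreasing. It actually yields the quantitative bound $\norm{K}_{L^{1/\alpha}(0,1)}\le\alpha\int_0^1t^{\alpha-1}K(t)\de t$, which is the embedding of the Lorentz space $L^{1/\alpha,1}$ into $L^{1/\alpha}$ for decreasing functions. Complete monotonicity plays no role there.

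If you prefer to stay within the Bernstein representation, Minkowski's integral inequality gives the same conclusion in one line. Since $\norm{e^{-\theta\,\cdot}}_{L^{1/\alpha}(0,1)}=\bigl(\alpha(1-e^{-\theta/\alpha})/\theta\bigr)^{\alpha}\le 1\land\theta^{-\alpha}$, one gets $\norm{K}_{L^{1/\alpha}(0,1)}\le\int_0^\infty(1\land\theta^{-\alpha})\,\mu(\de\theta)$. This bound is expressed directly through the type of quantity controlled in Assumption~\ref{ass:kernel}(ii), and it does not pass through the condition $\int_0^1 t^{\alpha-1}K(t)\de t<\infty$.

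In short, your route buys generality, since it holds for any monotone kernel. The Minkowski route buys a bound stated in terms of $\mu$ alone.
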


Following the exposition in \cite[Section 9]{Gripenberg-Londed-Staffans}, we now recall the \emph{convolutional Gronwall inequality}, which is used repeatedly in the paper. Its statement requires the following notion.

\begin{definition}
    Let $K\colon\real_+\to\real$ be a scalar kernel, extended by $K(t)=0$ for $t<0$, let $J \subset \real_+$ be an interval and let $p,q \in [1,+\infty]$ be conjugate exponents. We say that $K$ is of type $L^p$ on $J$ if
    \begin{equation}
        \trinorm{K}_{L^p(J)} := \sup \int_J \int_J |g(t)K(t-s)f(s)|\de s \, \de t < \infty,
    \end{equation}
    where the supremum is taken over all $g \in L^q(J)$ and $f \in L^p(J)$ with $\norm{g}_{L^q(J)}\leq 1$ and $\norm{f}_{L^p(J)}\leq1$.
\end{definition}

\begin{theorem}[Generalized Gronwall Lemma]
\label{thm:gronwall}
    Let $K:\real_+ \to \real$ be a scalar kernel of type $L^p$ on an interval $J \subset \real_+$ for some $p \in [1,\infty]$ and assume that $-K$ has a non-positive resolvent $r$ of type $L^p$. Let $x,f \in L^p_{\mathrm{loc}}(J)$ and suppose
    $$
    x(t) \leq f(t) + \int_0^t K(t-s)x(s) \de s, \quad \textnormal{for a.e. }t \in J. 
    $$
    Then,
    \begin{equation}
        x(t) \leq f(t) -\int_0^t r(t-s)f(s)\de s, \quad \textnormal{for a.e. }t \in J.
    \end{equation}
\end{theorem}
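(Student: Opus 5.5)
The strategy is to reduce to the resolvent representation of Theorem~\ref{thm:variation_constants} combined with a positivity argument. Set $u:=x-f-K*x$ and rewrite the hypothesis as
\[
x+(-K)*x=f+u,\qquad u\leq 0 \text{ a.e.\ on } J.
\]
Since $x,f\in L^p_{\mathrm{loc}}(J)$ and $K$ is of type $L^p$, the convolution $K*x$ lies in $L^p_{\mathrm{loc}}$, so $u$ is a legitimate $L^p_{\mathrm{loc}}$ forcing term. This turns the inequality into an exact convolution Volterra equation of the form \eqref{eq:VIE}, with kernel $-K$ and forcing $f+u$.

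Let $r$ be the resolvent of $-K$, which by assumption is non-positive and of type $L^p$. Theorem~\ref{thm:variation_constants} then gives the representation
\[
x=(f+u)-r*(f+u)=f-r*f+\big(u-r*u\big).
\]
The hypotheses of type $L^p$ on $K$ and $r$ are needed here. They guarantee that all the convolutions are well defined and finite a.e.\ on $J$ for $L^p_{\mathrm{loc}}$ inputs, and that uniqueness holds in $L^p_{\mathrm{loc}}$ rather than only in $L^1_{\mathrm{loc}}$. If $J$ is bounded, this reduces to the $L^1_{\mathrm{loc}}$ statement already quoted. For a general interval, I would work on each bounded subinterval $[0,T']\cap J$ and use causality of the convolution to patch the pieces together.

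It remains to show that $u-r*u\leq 0$. Since $u\leq 0$ and $-r\geq 0$, we have
\[
-(r*u)=\int_0^t\big(-r(t-s)\big)\,u(s)\,\de s\leq 0,
\]
so $u-r*u$ is a sum of two non-positive terms. This yields $x\leq f-r*f$ a.e., as claimed.

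The main obstacle is the justification step: making sure that $u$ is a genuine function in $L^p_{\mathrm{loc}}$ (not merely a measure) and that the resolvent formula applies to it. The type-$L^p$ assumptions are designed exactly for this. I would first try to handle it by invoking \cite[Section~9]{Gripenberg-Londed-Staffans} directly, and otherwise by truncating $J$ to bounded intervals, where $L^p\subset L^1$ and the $L^1_{\mathrm{loc}}$ resolvent theory applies.
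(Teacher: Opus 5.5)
The paper gives no proof of its own here; it recalls the result from \cite[Section~9]{Gripenberg-Londed-Staffans}. Your argument is the standard proof from that source and is correct: rewrite the inequality as the equation $x+(-K)*x=f+u$ with defect $u\le 0$, apply the resolvent representation, and sign the remainder via $u-r*u=u+(-r)*u\le 0$. The representation step can be done directly by convolving the equation with $r$ and using the two-sided identity $K*r=r*K=K+r$, with Fubini justified by the type-$L^p$ bounds on $|K|$ and $|r|$. This gives $-K*x=r*(f+u)$, and hence $x=(f+u)-r*(f+u)$, with no separate uniqueness or patching argument.
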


The following proposition gives sufficient conditions to verify its assumptions.
\begin{proposition}
    Let $K:\real_+ \to \real$ be a scalar non-positive kernel of type $L^p$ on a compact interval $J \subset \real_+$ for some $p \in [1,\infty]$. Suppose that $J$ can be divided into finitely many subintervals $J_i$ such that on each $J_i$, $\trinorm{K}_{L^p(J_i)}<1$. Then, the resolvent $r$ of $K$ is of type $L^p$ on $J$ and it is non-positive.
\end{proposition}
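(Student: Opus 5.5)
The plan is to write $K=-k$ with $k\geq 0$ and obtain the resolvent as a Neumann series whose terms all have the same sign. Formally, $r+K*r=K$ gives
$$
r=\sum_{n\geq1}(-1)^{n-1}K^{*n}=-\sum_{n\geq1}k^{*n},
$$
since $(-1)^{n-1}K^{*n}=(-1)^{n-1}(-1)^n k^{*n}=-k^{*n}$. Every term is non-positive because $k\geq0$, so the series has constant sign. It then suffices to prove that it converges in the type-$L^p$ sense on $J$. Non-positivity of $r$ follows at once, and by the uniqueness of the resolvent in $L^1_{\mathrm{loc}}$ the sum is indeed $r$.

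To get convergence, I would view convolution with $k$ as an integral operator $\mathcal K f(t)=\int_0^t k(t-s)f(s)\,\de s$ on $L^p(J)$. The type-$L^p$ norm $\trinorm{k}_{L^p(J)}$ is exactly the operator norm of the operator with kernel $|k(t-s)|\mathbf 1_{s\le t}$, so $\mathcal K$ is bounded. Moreover, $\trinorm{k^{*n}}_{L^p(J)}\leq\norm{\mathcal K^n}$, with equality of kernels because $k\geq0$. Splitting $J=J_1\cup\dots\cup J_m$ with the $J_i$ ordered in time, $L^p(J)=\bigoplus_i L^p(J_i)$, and since $\mathcal K$ is a Volterra operator it is block lower-triangular. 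Each diagonal block is the operator with kernel $k(t-s)$ for $t,s\in J_i$, whose norm is at most $\trinorm{k}_{L^p(J_i)}<1$.

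The key step is to show that the spectral radius of $\mathcal K$ on $L^p(J)$ is strictly less than one. For a block lower-triangular operator with finitely many blocks, $\sigma(\mathcal K)\subset\bigcup_i\sigma(\mathcal K_{ii})$: one inverts $\lambda-\mathcal K$ by forward substitution whenever all $\lambda-\mathcal K_{ii}$ are invertible. Each $\sigma(\mathcal K_{ii})$ lies in the disc of radius $\trinorm{k}_{L^p(J_i)}<1$, so $\limsup\norm{\mathcal K^n}^{1/n}<1$. Hence $\sum_n\norm{\mathcal K^n}<\infty$, which gives $\sum_n\trinorm{k^{*n}}_{L^p(J)}<\infty$. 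The series $-\sum_n k^{*n}$ therefore converges monotonically to a non-positive kernel $r$ of type $L^p$ on $J$, and $r$ satisfies $r-k*r=-k$, that is, $r+K*r=K$, by passing to the limit termwise.

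The main obstacle I expect is the bookkeeping in the identification step. The operator $\sum_n\mathcal K^n$ is a priori just an integral operator with kernel $\sum_n k^{*n}(t-s)$, and one must check that this is a genuine function of $t-s$ lying in $L^1_{\mathrm{loc}}$, so that it coincides with the resolvent of Definition~\ref{def:resolvent}. I would handle this by working with the scalar functions $k^{*n}$ directly. Monotone convergence gives local integrability once the operator bound is tested against indicator functions of subintervals. Uniqueness of the resolvent \cite[Theorem~2.3.1]{Gripenberg-Londed-Staffans} then closes the argument.
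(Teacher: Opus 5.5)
The paper gives no proof of this proposition; it is quoted from \cite[Section~9]{Gripenberg-Londed-Staffans}, so there is no in-text argument to compare against. Your proof is correct.

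\textbf{Comparison with the usual proof.} The usual argument for this statement is local-to-global. The Neumann series is used only on each $J_i$, where it converges because $\trinorm{K}_{L^p(J_i)}<1$ and all its terms are non-positive. The resolvent on $J$ is then assembled block by block, checking the sign of each new off-diagonal block. In your notation, forward substitution writes the off-diagonal blocks of $(I-\mathcal K)^{-1}$ as sums of products of the non-negative operators $\mathcal K_{ij}$ and $(I-\mathcal K_{ii})^{-1}=\sum_{n\geq0}\mathcal K_{ii}^n$. Hence $(I-\mathcal K)^{-1}$ is positivity preserving, and $r\leq0$.

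You run the same forward substitution for every $\lambda\notin\bigcup_i\sigma(\mathcal K_{ii})$, not only at $\lambda=1$. This turns it into a spectral radius bound. After that, existence and sign come together from the single series $r=-\sum_{n\geq1}(-K)^{*n}$ on all of $J$, with no sign bookkeeping across blocks.

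The cost is some spectral theory (complexification, Gelfand's formula). You can replace it by an elementary bound. Write $\mathcal K=\mathcal D+\mathcal N$, with $\mathcal D$ block diagonal, $\norm{\mathcal D}\leq\rho:=\max_i\trinorm{K}_{L^p(J_i)}<1$, and $\mathcal N$ strictly block lower triangular. Every word with at least $m$ factors $\mathcal N$ vanishes, so $\norm{\mathcal K^n}\leq\sum_{j<m}\binom{n}{j}\norm{\mathcal N}^j\rho^{\,n-j}$, which is summable.

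Besides the sign, your route yields the explicit representation $|r|=\sum_{n\geq1}|K|^{*n}$. The paper uses exactly this, without comment, in Remark~\ref{rmk:horizon dependence} to get monotonicity of $\norm{r}_{L^1(0,T)}$.

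\textbf{Two precisions on your identification step.}

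First, for $1<p<\infty$, testing against indicators only gives $\sum_n k^{*n}\in L^1_{\mathrm{loc}}([0,|J|))$. Integrability up to $|J|$ can genuinely fail. For example, $k(x)=\mathbf{1}_{[|J|/2,|J|)}(x)/(|J|-x)$ is of type $L^2$ on $J$ by Hilbert's inequality, satisfies the subdivision hypothesis trivially, and is not integrable. So the uniqueness you invoke should be that of $L^1_{\mathrm{loc}}([0,|J|))$ solutions of the resolvent equation on $[0,|J|)$. This suffices, since that equation only involves $K$ on $[0,|J|)$.

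Second, when $K\in L^1(0,|J|)$, neither the subdivision hypothesis nor any operator theory is needed. This covers the kernels $-\Lip_{\mathcal R_2}K$ and $-c\widehat K$ to which the paper applies the result. Choose $\sigma$ with $\norm{e^{-\sigma\cdot}k}_{L^1(0,|J|)}<1$ and use $e^{-\sigma t}(a*b)_t=\big((e^{-\sigma\cdot}a)*(e^{-\sigma\cdot}b)\big)_t$. Then $-\sum_n k^{*n}$ converges in $L^1(0,|J|)$, and Young's inequality gives $\trinorm{r}_{L^p(J)}\leq\norm{r}_{L^1(0,|J|)}$. Your argument is the one that carries over to the full generality of the cited result: non-integrable or non-convolution kernels of type $L^p$.
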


We conclude with two properties used in Section~\ref{subsec:arbitrary horizons}, for the energy identity and for the Fredholm alternative respectively.
\begin{proposition}
\label{prop:positive definite}
    Let $K$ be a completely monotone kernel with Bernstein measure $\mu$ such that $K\in L^1(0,T)$. Then, for every $f\in L^2(0,T;\real^d)$,
    $$
    \int_0^T f_t^\top(K*f)_t\,\de t\geq0 .
    $$
\end{proposition}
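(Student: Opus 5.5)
The plan is to use the Bernstein representation to rewrite the quadratic form as an integral over $\theta$ of quantities that are manifestly non-negative. This reduces the statement to the elementary exponential kernel case. Extend $f$ by zero outside $[0,T]$. By Tonelli's theorem, justified by $\int_0^T\int_0^t K(t-s)|f_s||f_t|\de s\de t\le \norm{K}_{L^1(0,T)}\norm{f}_{L^2}^2<\infty$ (Young's inequality), we can exchange the $\mu$-integral with the time integrals. This gives
$$
\int_0^T f_t^\top (K*f)_t\,\de t=\int_{[0,\infty)} I(\theta)\,\mu(\de\theta),\qquad I(\theta):=\int_0^T\int_0^t e^{-\theta(t-s)} f_t^\top f_s\,\de s\,\de t .
$$
To make the exchange fully rigorous, we first check integrability of $\int\int\int e^{-\theta(t-s)}|f_t||f_s|\,\de s\,\de t\,\mu(\de\theta)$. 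This triple integral equals the absolute version of the left-hand side, which is finite. Fubini then applies.

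It therefore suffices to show $I(\theta)\ge0$ for every $\theta\ge0$. Set $u_t(\theta):=\int_0^t e^{-\theta(t-s)}f_s\,\de s$, which is absolutely continuous with $\dot u_t=-\theta u_t+f_t$ a.e. and $u_0=0$. Then $I(\theta)=\int_0^T f_t^\top u_t\,\de t=\int_0^T(\dot u_t+\theta u_t)^\top u_t\,\de t=\tfrac12|u_T|^2+\theta\int_0^T|u_t|^2\de t\ge0$. The chain rule for $|u_t|^2$ holds since $u\in W^{1,2}$ (indeed $f\in L^2$).

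Integrating against the non-negative measure $\mu$ gives the claim. The only delicate point is the Fubini/Tonelli justification. This is handled by $K\in L^1(0,T)$ together with $f\in L^2$, through Young's convolution inequality $\norm{K*|f|}_{L^2}\le\norm{K}_{L^1}\norm{f}_{L^2}$ and Cauchy–Schwarz. In the vector-valued case, the same computation applies componentwise because $f_t^\top f_s$ is a scalar.
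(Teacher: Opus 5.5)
Your proposal is correct and follows essentially the same argument as the paper. Both reduce to the exponential kernels via the Bernstein representation and the identity $\int_0^T f_t^\top u_t\,\de t=\tfrac12|u_T|^2+\theta\int_0^T|u_t|^2\,\de t$, and both justify the exchange of integrals by the bound $\norm{K}_{L^1(0,T)}\norm{f}_{L^2}^2$ from Young's inequality.
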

\begin{proof}
    For $\theta\geq0$ let $F^\theta_t:=\int_0^te^{-\theta(t-s)}f_s\de s$. This function is absolutely continuous, with $F^\theta_0=0$ and $\frac{\de}{\de t}F^\theta_t=-\theta F^\theta_t+f_t$ for a.e.\ $t$. Hence
    $$
    \int_0^Tf_t^\top F^\theta_t\de t=\int_0^T\Big(\frac{\de}{\de t}F^\theta_t+\theta F^\theta_t\Big)^\top F^\theta_t\de t=\frac12|F^\theta_T|^2+\theta\int_0^T|F^\theta_t|^2\de t\geq0 .
    $$
    Since $\int_0^T\int_0^tK(t-s)|f_t||f_s|\de s\de t\leq\norm{K}_{L^1(0,T)}\norm{f}^2_{L^2(0,T)}<\infty$ by Young's inequality, an exchange of integrals gives
    $$
    \int_0^Tf_t^\top(K*f)_t\de t=\int_0^\infty\Big(\int_0^Tf_t^\top F^\theta_t\de t\Big)\mu(\de\theta)\geq0 .
    $$
\end{proof}

\begin{lemma}
\label{lemma:V compact}
    Suppose Assumption~\ref{ass:kernel} holds, and denote by $B_b([0,T];\real^d)$ the space of bounded measurable functions endowed with the sup norm. For $\varphi\in B_b([0,T];\real^d)$ set $(\mathcal V\varphi)_t(\theta):=\int_0^te^{-\theta(t-s)}\varphi_s\de s$. Then $\mathcal V$ is a compact linear operator from $B_b([0,T];\real^d)$ to $\rmC([0,T];L^1_\mu)$.
\end{lemma}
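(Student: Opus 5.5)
Linearity of $\mathcal V$ is immediate from the definition. Boundedness follows as in Step~1 of the proof of Theorem~\ref{thm:fixed point}: Tonelli's theorem and the Bernstein representation give $\norm{(\mathcal V\varphi)_t}_{L^1_\mu}\leq\norm{\varphi}_\infty\norm{K}_{L^1(0,T)}$. The same computation also gives the modulus of continuity
$$
\norm{(\mathcal V\varphi)_t-(\mathcal V\varphi)_u}_{L^1_\mu}\leq 2\norm{\varphi}_\infty\int_0^{t-u}K(r)\de r ,\qquad 0\le u\le t\le T,
$$
so $\mathcal V$ maps into $\rmC([0,T];L^1_\mu)$. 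To prove compactness, I will show that the image $\mathcal V(\mathcal B)$ of the unit ball $\mathcal B$ of $B_b$ is relatively compact, using the Arzelà--Ascoli theorem for continuous functions with values in the Banach space $L^1_\mu$. The displayed estimate already gives equicontinuity, uniformly over $\mathcal B$, because $K\in L^1(0,T)$. It therefore remains to show that, for each fixed $t$, the set $\{(\mathcal V\varphi)_t:\varphi\in\mathcal B\}$ is relatively compact in $L^1_\mu$.

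For this pointwise compactness I will truncate in $\theta$. Fix $\eps>0$ and split the half-line into three regions: $[0,\delta)$, $[\delta,A]$ and $(A,\infty)$.

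On $(A,\infty)$ I use $|(\mathcal V\varphi)_t(\theta)|\le\int_0^te^{-\theta(t-s)}\de s\le \min(t,\theta^{-1})$. Since $\theta^{-1}\le\theta^{-\zeta}$ for $\theta\ge1$, condition \eqref{hp:integrability on mu} gives $\int_A^\infty\theta^{-1}\mu(\de\theta)<\eps$ for $A$ large, uniformly in $\varphi\in\mathcal B$.

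On $[0,\delta)$ I use the bound $t\,\mu([0,\delta))$. This is small for small $\delta$, provided $\mu(\{0\})$ is handled separately.

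On the remaining pieces, namely $[\delta,A]$ together with the atom at $0$ (where $(\mathcal V\varphi)_t(0)=\int_0^t\varphi_s\de s$ is just a bounded vector in $\real^d$), the functions $\theta\mapsto(\mathcal V\varphi)_t(\theta)$ are uniformly bounded by $t$. They are also uniformly Lipschitz in $\theta$, since $|\partial_\theta e^{-\theta(t-s)}|\le T$. Moreover, $\mu$ is finite on $[0,A]$ because it is Radon. Ascoli's theorem in $\rmC([\delta,A];\real^d)$, together with the fact that uniform convergence implies $L^1$ convergence against a finite measure, then gives a finite $\eps$-net.

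Combining the three regions, $\{(\mathcal V\varphi)_t\}$ is totally bounded in $L^1_\mu$, which completes the argument.

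The main obstacle is this pointwise total boundedness. The essential point is the uniform tail control at large $\theta$, which is where Assumption~\ref{ass:kernel}(ii) enters; the argument must not rely on $\mu$ being finite, since singular kernels have $\mu([1,\infty))=\infty$. Equicontinuity is already available from the earlier estimate. An alternative for the pointwise step is to approximate $\mathcal V$ in operator norm by the finite-rank truncations $\varphi\mapsto\mathbf 1_{[\delta,A]}\,\mathcal V\varphi$ composed with a finite-dimensional discretisation. The direct Ascoli route seems simpler, and I will use it.
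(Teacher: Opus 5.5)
Your proof is correct, but it reaches compactness by a different route from the paper's.

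\textbf{The paper's route.} The paper argues sequentially. Given $\norm{\varphi^n}_\infty\le 1$, it extracts a weakly-$*$ convergent subsequence in $L^\infty(0,T)=L^1(0,T)^*$ (Banach--Alaoglu). It then observes that $(\mathcal V\varphi^n)_t(\theta)\to(\mathcal V\varphi)_t(\theta)$ for every $(t,\theta)$, because $s\mapsto e^{-\theta(t-s)}\mathbf 1_{[0,t]}(s)$ is integrable. Next it passes to $L^1_\mu$ by dominated convergence, with the single majorant $w(\theta)=T\land\theta^{-1}$. Finally, the common modulus of continuity and an $\eps/3$ argument upgrade pointwise-in-$t$ convergence to uniform convergence; this last step is in effect the proof of the vector-valued Arzel\`a--Ascoli theorem you invoke.

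\textbf{Your route.} You verify the Arzel\`a--Ascoli criterion directly and obtain pointwise relative compactness through total boundedness. The tail in $\theta$ is controlled by the same function $T\land\theta^{-1}$. On a compact $\theta$-interval you use the smoothness of the exponential (equi-Lipschitz in $\theta$) together with classical Ascoli.

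\textbf{Comparison.} The paper's argument needs no regularity in $\theta$: pointwise convergence and one integrable majorant suffice, and duality does the extraction. Yours is duality-free and quantitative, with explicit $\eps$-nets, at the cost of the truncation bookkeeping.

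\textbf{Simplifications.} The separate treatment of $[0,\delta)$ and of the atom at $0$ is unnecessary. The bound $|\partial_\theta e^{-\theta(t-s)}|\le T$ holds on all of $[0,A]$, and $\mu([0,A])<\infty$ since $\mu$ is Radon. You can therefore apply Ascoli on $[0,A]$ directly, choosing the sup-norm net with precision $\eps/\mu([0,A])$ once $A$ is fixed. The tail estimate can also be derived from $K\in L^1(0,T)$ rather than from Assumption~\ref{ass:kernel}(ii), as the paper does, via $(1-e^{-1})(T\land\theta^{-1})\le(1-e^{-\theta T})/\theta$.
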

\begin{proof}
    Recall that a bounded linear operator is compact if it maps bounded sets into sets with compact closure. Since $\rmC([0,T];L^1_\mu)$, endowed with the norm $\sup_{t\in[0,T]}\norm{\cdot}_{L^1_\mu}$, is a metric space, it is enough to show that, for every sequence $(\varphi^n)_n$ with $\norm{\varphi^n}_\infty\leq1$, the sequence $(\mathcal V\varphi^n)$ admits a subsequence converging in $\rmC([0,T];L^1_\mu)$, that is, in $L^1_\mu$ uniformly in $t\in[0,T]$.

    \emph{Step 1: boundedness and equicontinuity.} As in Step~1 of the proof of Theorem~\ref{thm:fixed point}, for $\varphi\in L^\infty(0,T;\real^d)$ and $0\leq u\leq t\leq T$,
    $$
    \norm{(\mathcal V\varphi)_t}_{L^1_\mu}\leq\norm{\varphi}_\infty\norm{K}_{L^1(0,T)},\qquad \norm{(\mathcal V\varphi)_t-(\mathcal V\varphi)_u}_{L^1_\mu}\leq2\norm{\varphi}_\infty\int_0^{t-u}K(r)\de r .
    $$
    In particular, $\mathcal V$ is bounded, and all the functions $\mathcal V\varphi$ with $\norm{\varphi}_\infty\leq1$ share the modulus of continuity $h\mapsto2\int_0^hK(r)\de r$, which vanishes as $h\to0$ since $K\in L^1(0,T)$.

    \emph{Step 2: convergence for fixed $t$.} By the Banach-Alaoglu theorem \cite[Corollary~3.30]{Brezis2011}, $L^\infty(0,T)$ is the dual of the separable space $L^1(0,T)$, every bounded sequence in $L^\infty(0,T)$ has a weakly-$*$ convergent subsequence. Hence there exist $\varphi\in L^\infty(0,T;\real^d)$ with $\norm{\varphi}_\infty\leq1$ and a subsequence, still denoted $(\varphi^n)_n$, such that $\int_0^Tg(s)\varphi^n_s\de s\to\int_0^Tg(s)\varphi_s\de s$ for every $g\in L^1(0,T)$. Choosing $g(s)=e^{-\theta(t-s)}\mathbf 1_{[0,t]}(s)$, we get $(\mathcal V\varphi^n)_t(\theta)\to(\mathcal V\varphi)_t(\theta)$ for every $t\in[0,T]$ and $\theta\geq0$. To pass to the limit in $L^1_\mu$, we use dominated convergence. We have
    $$
    |(\mathcal V\varphi^n)_t(\theta)|\leq\int_0^te^{-\theta(t-s)}\de s\leq w(\theta):=T\land\theta^{-1},
    $$
    and $w$ belongs to $L^1_\mu$. Indeed, if $\theta T\leq1$ then $1-e^{-\theta T}\geq(1-e^{-1})\theta T$, while if $\theta T\geq1$ then $1-e^{-\theta T}\geq1-e^{-1}$; in both cases $(1-e^{-1})\,w(\theta)\leq(1-e^{-\theta T})/\theta$, with the convention that the right-hand side equals $T$ for $\theta=0$. Integrating against $\mu$, we obtain $\norm{w}_{L^1_\mu}\leq \frac{\norm{K}_{L^1(0,T)}}{1-e^{-1}}<\infty$.
    By dominated convergence, $(\mathcal V\varphi^n)_t\to(\mathcal V\varphi)_t$ in $L^1_\mu$ for every $t\in[0,T]$.

    \emph{Step 3: uniform convergence in $t$.} The convergence of Step~2 holds for each fixed $t$, while convergence in $\rmC([0,T];L^1_\mu)$ requires it uniformly in $t$. Since the functions $\mathcal V\varphi^n$ and $\mathcal V\varphi$ share the modulus of continuity of Step~1, pointwise convergence on the compact interval $[0,T]$ implies uniform convergence, by the usual $\eps/3$ argument. Hence $\mathcal V\varphi^n\to\mathcal V\varphi$ in $\rmC([0,T];L^1_\mu)$. Thus every sequence in the image of the unit ball of $B_b([0,T];\real^d)$ has a subsequence converging in $\rmC([0,T];L^1_\mu)$, so that this image has compact closure and $\mathcal V$ is compact.
\end{proof}

\section{Estimates for the Riccati equations}
\label{appendix:B}
This appendix contains the technical results underlying the fixed-point argument of Section~\ref{subsec:fixed point}; the bounds on $\hat{R}(\Gamma_t)^{-1}$ and $S(\Gamma_t)$ are also used in the mean field limit of Section~\ref{sec:propagation}. The analysis is based on the associated Riccati equations and focuses on the dependence of their solutions on the parameter $m$. In particular, we establish the Lipschitz continuity of the solution map $m \mapsto \Lambda^m$ from which the key properties of the mapping $\psi$ follow. We first record two uniform bounds on the operators entering the optimal feedback and the Riccati system.
\begin{lemma}
\label{lemma:hat_N_bdd}
    Suppose Assumptions~\ref{ass:kernel} and~\ref{ass:data} hold, and let $\Gamma$ be the solution of the first Riccati equation in \eqref{eq:Riccati}. Then
    \[
    c_R:=\sup_{t \in [0,T]}|\hat{R}(\Gamma_t)^{-1}| < \infty .
    \]
\end{lemma}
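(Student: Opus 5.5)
The bound will follow from coercivity plus the non-negativity of $\Gamma_t$ stated in Proposition~\ref{prop:gamma}. Recall $\hat R(\Gamma_t)=R+F^\top\mu^{\otimes2}[\Gamma_t]F$. The first step is to show that the $d\times d$ matrix $\mu^{\otimes2}[\Gamma_t]$ is symmetric and positive semi-definite for every $t\in[0,T]$.

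\emph{Symmetry.} This comes from $\Gamma_t(\theta,\tau)=\Gamma_t(\tau,\theta)^\top$ together with Fubini's theorem, which applies because $\Gamma_t\in L^1_{\mu\otimes\mu}$.

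\emph{Positive semi-definiteness.} For a fixed vector $v\in\real^d$ I would test the non-negativity in Definition~\ref{def:gamma} with the constant function $\phi\equiv v$. This yields $v^\top\mu^{\otimes2}[\Gamma_t]v\ge 0$. There is a technical point here: constant functions belong to $L^1_\mu$ only if $\mu$ is finite, which fails for singular kernels. To handle this I would use truncations $\phi_n:=v\,\mathbf 1_{[0,n]}$, and also cut off near $\theta=0$ if $\mu$ has infinite mass there. These truncations lie in $L^1_\mu$ as long as $\mu$ is Radon and the truncation sets have finite measure. Since $\Gamma_t\in L^1_{\mu\otimes\mu}$, dominated convergence gives
\[
\int\!\!\int \phi_n^\top\Gamma_t\phi_n\,\de\mu\,\de\mu\to v^\top\mu^{\otimes2}[\Gamma_t]v,
\]
and the limit inherits the sign. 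I expect this approximation step to be the only mildly delicate point.

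With this in hand, for every $u\in\real^q$,
\[
u^\top\hat R(\Gamma_t)u=u^\top Ru+(Fu)^\top\mu^{\otimes2}[\Gamma_t](Fu)\ge u^\top Ru\ge\lambda|u|^2,
\]
by Assumption~\ref{ass:data}(iii). So $\hat R(\Gamma_t)$ is symmetric with smallest eigenvalue at least $\lambda$. It is therefore invertible, and its inverse is symmetric positive definite with operator norm $|\hat R(\Gamma_t)^{-1}|\le\lambda^{-1}$. Taking the supremum over $t$ gives $c_R\le\lambda^{-1}<\infty$.

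Note that this constant does not depend on $t$, on $T$, or on $m$. That is consistent with the later use of $c_R$ in Lemma~\ref{lemma:psi_lip} and in Section~\ref{sec:propagation}.
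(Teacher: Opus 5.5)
Your argument is correct, but it takes a different route from the paper's proof.

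\emph{The paper's route.} It never uses the sign of $\Gamma_t$. It takes invertibility of $\hat R(\Gamma_t)$ for each $t$ from Definition~\ref{def:sol_riccati}. It shows that $t\mapsto\hat R(\Gamma_t)$ is continuous via $|\hat R(\Gamma_t)-\hat R(\Gamma_s)|\le|F|^2\norm{\Gamma_t-\Gamma_s}_{L^1_{\mu\otimes\mu}}$. It then concludes by continuity of matrix inversion and compactness of $[0,T]$. This gives finiteness only with a non-explicit constant, whose dependence on $T$ is then tracked separately in Remark~\ref{rmk:horizon dependence}.

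\emph{Your route.} You use $\Gamma_t\in S^d_+(\mu\otimes\mu)$ from Proposition~\ref{prop:gamma} to get $\hat R(\Gamma_t)-R\in S^q_+$. This yields two things at once:
\begin{itemize}
\item invertibility directly, without the invertibility clause of the definition of solution and without any continuity argument;
\item the explicit bound $c_R\le\lambda^{-1}$, uniform in $t$ and in the horizon $T$.
\end{itemize}
The inequality $\hat R(\Gamma_t)-R\in S^q_+$ is exactly the one the paper itself invokes in Remark~\ref{rmk:global F}, where $\lambda^2$ appears in the denominator. The price is the approximation step passing from test functions in $L^1_\mu$ to constant vectors, which you handle correctly by dominated convergence, since $\Gamma_t\in L^1_{\mu\otimes\mu}$.

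\emph{Two minor simplifications.}
\begin{itemize}
\item The cut-off near $\theta=0$ is unnecessary. Since $\mu$ is Radon on $[0,\infty)$, we have $\mu([0,n])<\infty$; Assumption~\ref{ass:kernel}(ii) also gives $\mu([0,1])<\infty$. Hence $v\mathbf 1_{[0,n]}\in L^1_\mu$ already.
\item Symmetry of $\hat R(\Gamma_t)$ is not needed for the norm bound. The inequality $|\hat R(\Gamma_t)u|\,|u|\ge u^\top\hat R(\Gamma_t)u\ge\lambda|u|^2$ already gives both invertibility and $|\hat R(\Gamma_t)^{-1}|\le\lambda^{-1}$.
\end{itemize}
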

\begin{proof}
    By Definition~\ref{def:sol_riccati}, $\hat{R}(\Gamma_t)$ is invertible for every $t \in [0,T]$. Moreover, the map $[0,T] \ni t \mapsto \hat{R}(\Gamma_t) \in \real^{q \times q}$ is continuous, since for any $s, t \in [0,T]$
    \begin{align*}
        \left|\hat{R}(\Gamma_t) - \hat{R}(\Gamma_s)\right| \leq |F|^2 \|\Gamma_t - \Gamma_s\|_{L^1_{\mu \otimes \mu}},
    \end{align*}
    and the right-hand side tends to $0$ as $s \to t$, by continuity of $t \mapsto \Gamma_t$ in $L^1_{\mu \otimes \mu}$. Since matrix inversion is continuous on the set of invertible matrices, $t\mapsto\hat{R}(\Gamma_t)^{-1}$ is continuous on the compact interval $[0,T]$, hence bounded.
\end{proof}
\begin{lemma}\label{lemma:S_Gamma_bdd}
    Suppose Assumptions~\ref{ass:kernel} and~\ref{ass:data} hold. Then
    \[
    \sup_{t \in [0,T]}\norm{S(\Gamma_t)}_{L^\infty_\mu} \leq c_S:=|C|\,c_\Gamma+|F||D|\sup_{t\in[0,T]}\norm{\Gamma_t}_{L^1_{\mu\otimes\mu}},
    \]
    where $\Gamma_t$ is the solution of the first Riccati equation in \eqref{eq:Riccati} and $c_\Gamma$ is the constant of Proposition~\ref{prop:gamma}.
\end{lemma}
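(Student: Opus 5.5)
The bound follows directly from the definition of $S$ together with the two integrability properties of $\Gamma$ recorded in Proposition~\ref{prop:gamma}. Fix $t\in[0,T]$ and, for $\mu$-a.e.\ $\tau$, recall that
\[
S(\Gamma_t)(\tau)=C^\top\mu[\Gamma_t(\cdot,\tau)]+F^\top\mu^{\otimes2}[\Gamma_t]D .
\]
I would estimate the two summands separately and then take the essential supremum in $\tau$ and the supremum in $t$.

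For the first summand, the triangle inequality for the integral gives $|C^\top\mu[\Gamma_t(\cdot,\tau)]|\leq|C|\int_0^\infty|\Gamma_t(\theta,\tau)|\mu(\de\theta)$. The bound of Proposition~\ref{prop:gamma} is stated for the integral in the second variable, $\int_0^\infty|\Gamma_t(\theta,\tau)|\mu(\de\tau)\leq c_\Gamma$ for $\mu$-a.e.\ $\theta$. To use it here I would invoke the symmetry $\Gamma_t(\theta,\tau)=\Gamma_t(\tau,\theta)^\top$, valid $\mu\otimes\mu$-a.e.\ since $\Gamma_t\in S^d_+(\mu\otimes\mu)$. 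Because transposition preserves the operator norm, we get $\int_0^\infty|\Gamma_t(\theta,\tau)|\mu(\de\theta)=\int_0^\infty|\Gamma_t(\tau,\theta)|\mu(\de\theta)\leq c_\Gamma$ for $\mu$-a.e.\ $\tau$. This is the only step requiring some care: one must check that the null sets line up, which follows from Fubini applied to the $\mu\otimes\mu$-null exceptional set of the symmetry relation.

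For the second summand, $|F^\top\mu^{\otimes2}[\Gamma_t]D|\leq|F||D|\,\norm{\Gamma_t}_{L^1_{\mu\otimes\mu}}$, and this does not depend on $\tau$. Moreover $\sup_t\norm{\Gamma_t}_{L^1_{\mu\otimes\mu}}<\infty$, because $\Gamma\in\rmC([0,T];L^1_{\mu\otimes\mu})$ and $[0,T]$ is compact.

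Adding the two estimates, taking the $\mu$-essential supremum in $\tau$ and then the supremum over $t\in[0,T]$ yields exactly the constant $c_S$ stated in the lemma.
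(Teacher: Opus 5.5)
Your proof is correct and follows the paper's argument essentially line by line. You apply the triangle inequality to the two summands of $S(\Gamma_t)(\tau)$, use the symmetry $\Gamma_t(\theta,\tau)=\Gamma_t(\tau,\theta)^\top$ to transfer the bound of Proposition~\ref{prop:gamma} to the first variable, and use continuity of $t\mapsto\Gamma_t$ in $L^1_{\mu\otimes\mu}$ on the compact interval $[0,T]$ to bound the second term; your remark on aligning the exceptional null sets via Fubini (valid since $\mu$, being Radon, is $\sigma$-finite) is a detail the paper leaves implicit.
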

\begin{proof}
    By definition of $S$, for $\mu$-a.e.\ $\tau$,
    \begin{align*}
        |S(\Gamma_t)(\tau)| \leq |C| \norm{\Gamma_t(\cdot,\tau)}_{L^1_\mu} + |F||D|\norm{\Gamma_t}_{L^1_{\mu\otimes\mu}}.
    \end{align*}
    By the symmetry $\Gamma_t(\theta,\tau)=\Gamma_t(\tau,\theta)^\top$, we have $\norm{\Gamma_t(\cdot,\tau)}_{L^1_\mu}=\int_0^\infty|\Gamma_t(\tau,\theta)|\mu(\de\theta)\leq c_\Gamma$ for $\mu$-a.e.\ $\tau$, by Proposition~\ref{prop:gamma}. The supremum of $\norm{\Gamma_t}_{L^1_{\mu\otimes\mu}}$ over $[0,T]$ is finite by the continuity of $t\mapsto\Gamma_t$ in $L^1_{\mu\otimes\mu}$. The thesis follows by taking the essential supremum over $\tau \geq 0$.
\end{proof}

The next lemma is an elementary Lipschitz estimate for the affine map $h$ in its function arguments.
\begin{lemma}\label{lemma:h_lip}
Suppose Assumption~\ref{ass:kernel} holds. Let $m^1, m^2 \in \rmC([0,T];\mathbb{R}^d)$ and $\ell^1, \ell^2 \in L^1_\mu$. Then, for every $t \in [0,T]$ and every $G \in L^1_{\mu \otimes \mu}$, the following estimate holds
\begin{align}
    \big|h(t,G,\ell^1,m^1) - h(t,G,\ell^2,m^2)\big|
    \leq\Lip_{h,m}(G)\, \|m^1 - m^2\|_\infty +\Lip_{h,\ell}\, \|\ell^1 - \ell^2\|_{L^1_\mu}.
\end{align}
Here, $\mathrm{Lip}_{h,m}(G)$ is a Lipschitz constant depending on the matrices $D,F$ and on $G$ only through its $L^1_{\mu \otimes \mu}$-norm, while $\mathrm{Lip}_{h,\ell}$ depends only on the matrix $C$.
\end{lemma}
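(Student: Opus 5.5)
The estimate follows directly from the definition of $h$,
\[
h(t,G,\ell,m) = C^\top \mu[\ell] + F^\top \mu^{\otimes 2}[G]\big(\gamma(t)+Dg_0(t)-Dm_t\big).
\]
Since $h$ is affine in the pair $(\ell,m)$ for fixed $t$ and $G$, the terms $\gamma(t)$ and $Dg_0(t)$ cancel in the difference. This leaves
\[
h(t,G,\ell^1,m^1) - h(t,G,\ell^2,m^2) = C^\top \mu[\ell^1-\ell^2] - F^\top \mu^{\otimes 2}[G]\,D\,(m^1_t-m^2_t).
\]
I will bound the two summands separately by the triangle inequality.

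For the first summand, the pointwise inequality $|\mu[\varphi]|\le \int_0^\infty |\varphi(\theta)|\,\mu(\de\theta) = \norm{\varphi}_{L^1_\mu}$, applied componentwise and passed through the Euclidean norm, gives
\[
|C^\top\mu[\ell^1-\ell^2]| \le |C|\,\norm{\ell^1-\ell^2}_{L^1_\mu}.
\]
So we may take $\Lip_{h,\ell}:=|C|$, using $|C^\top|=|C|$ for the operator norm.

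For the second summand, the analogous bound $|\mu^{\otimes 2}[G]| \le \int\!\!\int |G(\theta,\tau)|\,\mu(\de\theta)\mu(\de\tau) = \norm{G}_{L^1_{\mu\otimes\mu}}$ holds, since the operator norm of an integral of matrices is at most the integral of the operator norms. Combined with submultiplicativity of the operator norm and $|m^1_t-m^2_t|\le\norm{m^1-m^2}_\infty$, this yields
\[
|F^\top \mu^{\otimes 2}[G]\,D\,(m^1_t-m^2_t)| \le |F|\,|D|\,\norm{G}_{L^1_{\mu\otimes\mu}}\,\norm{m^1-m^2}_\infty .
\]
We therefore set $\Lip_{h,m}(G):=|F||D|\norm{G}_{L^1_{\mu\otimes\mu}}$, which depends only on $D$, $F$ and the $L^1_{\mu\otimes\mu}$-norm of $G$, as claimed.

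There is no real obstacle here. The only point requiring care is justifying that the norm of a Bochner-type integral of matrix-valued functions is bounded by the integral of the norms. This holds because $G\in L^1_{\mu\otimes\mu}$ makes the integral well defined, and Assumption~\ref{ass:kernel} is used only so that $\mu$ is a genuine measure for which these $L^1$ spaces make sense.
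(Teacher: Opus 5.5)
Your proof is correct and follows the paper's argument. The paper also splits the difference into the $m$-part and the $\ell$-part and obtains the same constants $\Lip_{h,m}(G)=|F||D|\norm{G}_{L^1_{\mu\otimes\mu}}$ and $\Lip_{h,\ell}=|C|$. One small refinement: justify $|\mu[\varphi]|\le\norm{\varphi}_{L^1_\mu}$ for vector-valued $\varphi$ the same way you do for $G$ (the norm of an integral is at most the integral of the norm), since a purely componentwise bound would leave a factor $\sqrt{d}$.
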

\begin{proof}
    The proof is a direct consequence of the definition of $h$ and triangular inequality. Indeed
    \[
    \big|h(t,G,\ell^1,m^1) - h(t,G,\ell^1,m^2)\big| = |F^\top \mu^{\otimes2}[G]D(m^1_t-m^2_t)| \leq |F||D|\norm{G}_{L^1_{\mu\otimes\mu}} \norm{m^1-m^2}_\infty,
    \]
    and
    \[
    \big|h(t,G,\ell^1,m^1) - h(t,G,\ell^2,m^1)\big| = |C^\top \mu[\ell^1-\ell^2]| \leq |C| \norm{\ell^1-\ell^2}_{L^1_\mu}.
    \]
\end{proof}
\begin{remark}
    Notice that the Lipschitz constant $\mathrm{Lip}_{h,m}(G)$, when evaluated at the solution $\Gamma_t$ of the first Riccati equation, is bounded, indeed
    \[
    \sup_{t \in [0,T]}\mathrm{Lip}_{h,m}(\Gamma_t) =|F||D| \sup_{t \in [0,T]}\norm{\Gamma_t}_{L^1_{\mu\otimes\mu}} =:\Lip_{h,m} < \infty
    \]
    by the continuity of $t\mapsto\Gamma_t$ in $L^1_{\mu\otimes\mu}$. We will use the notation $\mathrm{Lip}_{h,m}$ to denote this particular Lipschitz constant.
\end{remark}
Combining the previous bounds, we obtain the Lipschitz continuity of $\mathcal{R}_2$ evaluated along the solution $\Gamma$ of the first Riccati equation.
\begin{lemma}\label{lemma:R_2_lip}
    Suppose Assumption~\ref{ass:kernel} and Assumption~\ref{ass:data} hold. Let $m^1, m^2 \in \rmC([0,T];\mathbb{R}^d)$ and $\ell^1, \ell^2 \in L^1_\mu$. Then, for every $t \in [0,T]$, the following estimate holds
    \begin{equation}
        \norm{\mathcal{R}_2(t,\Gamma_t,\ell^1,m^1) - \mathcal{R}_2(t,\Gamma_t,\ell^2,m^2)}_{L^\infty_\mu}
    \leq\Lip_{\mathcal{R}_2}\{ \norm{m^1-m^2}_\infty + \norm{\ell^1-\ell^2}_{L^1_\mu} \},
    \end{equation}
    where $\Gamma_t$ is the solution of the first Riccati in \eqref{eq:Riccati}. The constant $\mathrm{Lip}_{\mathcal{R}_2}$ depends on the matrices of the model and on $\Gamma_t$ through its $\rmC([0,T];L^1_{\mu\otimes\mu})$-norm.
\end{lemma}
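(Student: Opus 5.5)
We estimate the difference term by term, directly from the definition of $\mathcal{R}_2$. With $G=\Gamma_t$ fixed, the map $(\ell,m)\mapsto\mathcal{R}_2(t,\Gamma_t,\ell,m)(\theta)$ is affine: the terms $L+Qg_0(t)$, $\mu[\Gamma_t(\theta,\cdot)](\beta(t)+Bg_0(t))$ and $D^\top\mu^{\otimes2}[\Gamma_t](\gamma(t)+Dg_0(t))$ cancel in the difference. This leaves
$$
\begin{aligned}
\mathcal{R}_2(t,\Gamma_t,\ell^1,m^1)(\theta)-\mathcal{R}_2(t,\Gamma_t,\ell^2,m^2)(\theta)
&= B^\top\mu[\ell^1-\ell^2]-\mu[\Gamma_t(\theta,\cdot)]B(m^1_t-m^2_t)-D^\top\mu^{\otimes2}[\Gamma_t]D(m^1_t-m^2_t)\\
&\quad -S(\Gamma_t)(\theta)^\top\hat R(\Gamma_t)^{-1}\big(h(t,\Gamma_t,\ell^1,m^1)-h(t,\Gamma_t,\ell^2,m^2)\big).
\end{aligned}
$$
We then bound the four terms in $L^\infty_\mu$, that is, uniformly for $\mu$-a.e.\ $\theta$.

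The first term is bounded by $|B|\norm{\ell^1-\ell^2}_{L^1_\mu}$, since $|\mu[\varphi]|\le\norm{\varphi}_{L^1_\mu}$.

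For the second term we use Proposition~\ref{prop:gamma}, which gives $\int_0^\infty|\Gamma_t(\theta,\tau)|\mu(\de\tau)\le c_\Gamma$ for $\mu$-a.e.\ $\theta$. This is the one place where an $L^\infty_\mu$ bound in $\theta$ is really needed, and the uniform row bound from that proposition supplies it, yielding $c_\Gamma|B|\,\norm{m^1-m^2}_\infty$.

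The third term is bounded by $|D|^2\sup_{s}\norm{\Gamma_s}_{L^1_{\mu\otimes\mu}}\norm{m^1-m^2}_\infty$. The supremum is finite by the continuity of $t\mapsto\Gamma_t$ in $L^1_{\mu\otimes\mu}$, exactly as in Lemma~\ref{lemma:S_Gamma_bdd}.

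For the last term we combine three earlier results: Lemma~\ref{lemma:S_Gamma_bdd} gives $\norm{S(\Gamma_t)}_{L^\infty_\mu}\le c_S$, Lemma~\ref{lemma:hat_N_bdd} gives $|\hat R(\Gamma_t)^{-1}|\le c_R$, and Lemma~\ref{lemma:h_lip} together with the subsequent remark gives the Lipschitz bound $\Lip_{h,m}\norm{m^1-m^2}_\infty+\Lip_{h,\ell}\norm{\ell^1-\ell^2}_{L^1_\mu}$ on the difference of the $h$ terms. The product is bounded by $c_Sc_R$ times that Lipschitz bound.

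Collecting the four estimates and taking the essential supremum over $\theta$, we may take
$$
\Lip_{\mathcal R_2}:=\max\Big\{|B|+c_Sc_R\Lip_{h,\ell},\; c_\Gamma|B|+|D|^2\sup_s\norm{\Gamma_s}_{L^1_{\mu\otimes\mu}}+c_Sc_R\Lip_{h,m}\Big\}.
$$
This constant depends only on the model matrices and on $\Gamma$ (through $c_\Gamma$ and its $\rmC([0,T];L^1_{\mu\otimes\mu})$-norm), and not on $t$. The only potential obstacle is the $\theta$-dependence entering through $\mu[\Gamma_t(\theta,\cdot)]$ and $S(\Gamma_t)(\theta)$. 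It is fully handled by the uniform row bound of Proposition~\ref{prop:gamma}, so no further difficulty is expected.
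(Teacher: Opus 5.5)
Your proposal is correct and is essentially the paper's proof: the same contributions are bounded using $c_\Gamma$, $|D|^2\sup_s\norm{\Gamma_s}_{L^1_{\mu\otimes\mu}}$, $c_Sc_R$ and the Lipschitz constants of $h$, which gives exactly the same $\max$-type constant $\Lip_{\mathcal R_2}$. The only cosmetic difference is that you expand the full difference at once using the affine structure, whereas the paper first varies $m$ with $\ell$ fixed, then varies $\ell$ with $m$ fixed, and combines the two estimates via the triangle inequality.
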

\begin{proof}
    By definition of $\mathcal{R}_2$, we get
    \begin{align*}
            &|\mathcal{R}_2(t,\Gamma_t,\ell^1,m^1)(\theta)-\mathcal{R}_2(t,\Gamma_t,\ell^1,m^2)(\theta)| \\[0.2cm]
        & \leq \Big(\left|\mu[\Gamma_t(\theta,\cdot)]\right| |B| + |D|^2\norm{\Gamma_t}_{L^1_{\mu\otimes\mu}}\Big)|m^1_t-m^2_t| + |S(\Gamma_t)(\theta)||\hat{R}(\Gamma_t)^{-1}||h(t,\Gamma_t,\ell^1,m^1)-h(t,\Gamma_t,\ell^1,m^2)| \\[0.2cm]
         & \leq \Big(c_\Gamma |B| + |D|^2\sup_{s\in[0,T]}\norm{\Gamma_s}_{L^1_{\mu\otimes\mu}} + c_Sc_R\Lip_{h,m}\Big) \norm{m^1-m^2}_\infty.
    \end{align*}
    Moreover,
    \begin{align*}
        &|\mathcal{R}_2(t,\Gamma_t,\ell^1,m^1)(\theta)-\mathcal{R}_2(t,\Gamma_t,\ell^2,m^1)(\theta)| \\[0.2cm]
        & \leq |B||\mu[\ell^1]-\mu[\ell^2]| + |S(\Gamma_t)(\theta)||\hat{R}(\Gamma_t)^{-1}||h(t,\Gamma_t,\ell^1,m^1)-h(t,\Gamma_t,\ell^2,m^1)| \\[0.2cm] 
        & \leq (|B| + c_Sc_R\mathrm{Lip}_{h,\ell})\norm{\ell^1-\ell^2}_{L^1_\mu}.
    \end{align*}
    
    Putting together the two computations and taking the essential supremum over $\theta$, we obtain the thesis with $\Lip_{\mathcal{R}_2}:=\max\big\{c_\Gamma |B| + |D|^2\sup_{s\in[0,T]}\norm{\Gamma_s}_{L^1_{\mu\otimes\mu}} + c_Sc_R\Lip_{h,m},\ |B| + c_Sc_R\Lip_{h,\ell}\big\}$.
    
\end{proof}
We can finally establish the main estimate of this appendix, the Lipschitz continuity of the solution map $m\mapsto\Lambda^m$, on which the Lipschitz property of $\psi$ (Lemma~\ref{lemma:psi_lip}) ultimately rests.
\begin{lemma}\label{lemma:Lambda_lip}
    Suppose Assumption~\ref{ass:kernel} and Assumption~\ref{ass:data} hold. Let $m^1, m^2 \in \rmC([0,T];\mathbb{R}^d)$ and let $\Lambda^1, \Lambda^2$ be the corresponding solutions to the second Riccati equation in \eqref{eq:Riccati} obtained with $m^1$ and $m^2$ respectively. Then, the following estimate holds
    \begin{equation}
        \sup_{t \in [0,T]} \norm{\Lambda^1_t-\Lambda^2_t}_{L^1_\mu} \leq c_K \norm{m^1-m^2}_\infty,
    \end{equation}
    where the constant $c_K$ depends on the matrices of the model, on $\Gamma_t$ through its $\rmC([0,T];L^1_{\mu\otimes\mu})$-norm, on the kernel $K$ through its $L^1(0,T)$-norm and on the $L^1(0,T)$-norm of its resolvent.
\end{lemma}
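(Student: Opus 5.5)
The plan is to subtract the two mild formulations of the second Riccati equation and close the estimate with a Gronwall argument, run backward in time. Set $\delta_t := \Lambda^1_t - \Lambda^2_t$. By Definition~\ref{def:sol_riccati}, for every $t\in[0,T]$ and $\mu$-a.e. $\theta$,
\[
\delta_t(\theta) = \int_t^T e^{-\theta(s-t)}\Big(\mathcal{R}_2(s,\Gamma_s,\Lambda^1_s,m^1)(\theta) - \mathcal{R}_2(s,\Gamma_s,\Lambda^2_s,m^2)(\theta)\Big)\de s .
\]
The first Riccati component $\Gamma$ is the same for both curves, since it does not depend on $m$. Lemma~\ref{lemma:R_2_lip} therefore bounds the integrand pointwise in $\theta$ by $\Lip_{\mathcal{R}_2}\big(\norm{m^1-m^2}_\infty + \norm{\delta_s}_{L^1_\mu}\big)$. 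This is the key point: the lemma gives an $L^\infty_\mu$ bound, uniform in $\theta$, while the right-hand side involves only the $L^1_\mu$ norm of $\delta_s$.

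Next I integrate $|\delta_t(\theta)|$ against $\mu$ and apply Tonelli's theorem. Using the Bernstein representation $\int_0^\infty e^{-\theta(s-t)}\mu(\de\theta) = K(s-t)$, this gives
\[
\norm{\delta_t}_{L^1_\mu} \leq \Lip_{\mathcal{R}_2}\norm{K}_{L^1(0,T)}\norm{m^1-m^2}_\infty + \Lip_{\mathcal{R}_2}\int_t^T K(s-t)\norm{\delta_s}_{L^1_\mu}\de s .
\]
This is a backward convolution inequality. I reverse time by setting $u(r) := \norm{\delta_{T-r}}_{L^1_\mu}$ for $r\in[0,T]$. The substitution $s = T - r'$ turns the inequality into the forward form
\[
u(r) \leq a + \Lip_{\mathcal{R}_2}\int_0^r K(r-r')u(r')\de r', \qquad a := \Lip_{\mathcal{R}_2}\norm{K}_{L^1(0,T)}\norm{m^1-m^2}_\infty .
\]
The function $u$ is bounded and continuous, because $\Lambda^i \in \rmC([0,T];L^1_\mu)$.

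To close the estimate I apply Theorem~\ref{thm:gronwall} with kernel $\Lip_{\mathcal{R}_2}K$, which is of type $L^\infty$ on $[0,T]$ because $K\in L^1(0,T)$. The main technical point is checking the hypothesis that the resolvent $r$ of $-\Lip_{\mathcal{R}_2}K$ is non-positive and of type $L^\infty$. I would obtain this from the sufficient-condition proposition following Theorem~\ref{thm:gronwall}: the kernel $-\Lip_{\mathcal{R}_2}K$ is non-positive, and since $K \in L^1$, one can split $[0,T]$ into finitely many subintervals of small length $h$, on each of which $\Lip_{\mathcal{R}_2}\int_0^h K < 1$. Theorem~\ref{thm:gronwall} then yields $u(r) \leq a\big(1 + \norm{r}_{L^1(0,T)}\big)$. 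This gives the claim with
\[
c_K := \Lip_{\mathcal{R}_2}\norm{K}_{L^1(0,T)}\big(1 + \norm{r}_{L^1(0,T)}\big),
\]
which depends only on the matrices of the model, on the $\rmC([0,T];L^1_{\mu\otimes\mu})$-norm of $\Gamma$ through $\Lip_{\mathcal{R}_2}$, on $\norm{K}_{L^1(0,T)}$ and on the $L^1(0,T)$-norm of the resolvent, as stated.
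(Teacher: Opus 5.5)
Your proof is correct and follows the paper's argument essentially step by step. You subtract the mild formulations, bound the integrand in $L^\infty_\mu$ via Lemma~\ref{lemma:R_2_lip}, and integrate against $\mu$ with Tonelli to obtain the backward inequality with kernel $K(s-t)$. You then reverse time and apply the convolutional Gronwall inequality, which gives exactly the paper's constant $c_K=\Lip_{\mathcal{R}_2}\norm{K}_{L^1(0,T)}\big(1+\norm{r}_{L^1(0,T)}\big)$.

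The differences are only cosmetic. The paper first splits the difference into two terms (changing $m$, then $\Lambda$) before invoking Lemma~\ref{lemma:R_2_lip}, whereas you apply it to both arguments at once. You also justify the non-positivity of the resolvent through the subdivision criterion, which the paper merely asserts. Finally, you use the letter $r$ both for the reversed time variable and for the resolvent, which you may want to rename.
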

\begin{proof}
    By definition of solution to \eqref{eq:Riccati}, we get
    \begin{align*}
        \norm{\Lambda^1_t-\Lambda^2_t}_{L^1_\mu} & =  \int_0^\infty \left| \int_t^T e^{-\theta(s-t)}[\mathcal{R}_2(s,\Gamma_s,\Lambda^1_s,m^1)(\theta)-\mathcal{R}_2(s,\Gamma_s,\Lambda^2_s,m^2)(\theta)]\de s\right|\mu(\de\theta) \\[0.2cm]
        & \leq \int_0^\infty  \int_t^T e^{-\theta(s-t)}|\mathcal{R}_2(s,\Gamma_s,\Lambda^1_s,m^1)(\theta)-\mathcal{R}_2(s,\Gamma_s,\Lambda^1_s,m^2)(\theta)|ds\, \mu(\de\theta) \\
        & \qquad + \int_0^\infty  \int_t^T e^{-\theta(s-t)}|\mathcal{R}_2(s,\Gamma_s,\Lambda^1_s,m^2)(\theta)-\mathcal{R}_2(s,\Gamma_s,\Lambda^2_s,m^2)(\theta)|ds\, \mu(\de\theta) \\[0.2cm]
        & = \mathrm{\textbf{I}} + \mathrm{\textbf{II}}.
    \end{align*}
    By Fubini-Tonelli and Lemma \ref{lemma:R_2_lip},
    \begin{align*}
        \mathrm{\textbf{I}} & \leq \int_0^\infty  \int_t^T e^{-\theta(s-t)}\norm{\mathcal{R}_2(s,\Gamma_s,\Lambda^1_s,m^1)-\mathcal{R}_2(s,\Gamma_s,\Lambda^1_s,m^2)}_{L^\infty_\mu}ds\, \mu(\de\theta) \\[0.2cm]
        & \leq \int_0^\infty  \int_t^T e^{-\theta(s-t)}\Lip_{\mathcal{R}_2} \norm{m^1-m^2}_\infty ds\, \mu(\de\theta) =\Lip_{\mathcal{R}_2} \norm{m^1-m^2}_\infty\int_t^T K(s-t)\de s\\[0.2cm]
        & \leq\Lip_{\mathcal{R}_2}\norm{K}_{L^1(0,T)} \norm{m^1-m^2}_\infty.
    \end{align*}
    In an analogous way,
    \begin{align*}
        \mathrm{\textbf{II}} & \leq \int_0^\infty  \int_t^T e^{-\theta(s-t)}\norm{\mathcal{R}_2(s,\Gamma_s,\Lambda^1_s,m^2)-\mathcal{R}_2(s,\Gamma_s,\Lambda^2_s,m^2)}_{L^\infty_\mu}\de s\, \mu(\de\theta) \\[0.2cm]
        & \leq \int_0^\infty  \int_t^T e^{-\theta(s-t)}\Lip_{\mathcal{R}_2} \norm{\Lambda^1_s-\Lambda^2_s}_{L^1_\mu} \de s\, \mu(\de\theta) =\Lip_{\mathcal{R}_2}\int_t^T K(s-t)\norm{\Lambda^1_s-\Lambda^2_s}_{L^1_\mu}\de s.
    \end{align*}
    Putting all together, we get
    \begin{align*}
        \norm{\Lambda^1_t-\Lambda^2_t}_{L^1_\mu} \leq\Lip_{\mathcal{R}_2}\left\{ \norm{K}_{L^1(0,T)} \norm{m^1-m^2}_\infty + \int_t^T K(s-t)\norm{\Lambda^1_s-\Lambda^2_s}_{L^1_\mu}\de s\right\}.
    \end{align*}
    The inequality above is of backward type. Setting $x(u) := \norm{\Lambda^1_{T-u}-\Lambda^2_{T-u}}_{L^1_\mu}$ for $u\in[0,T]$ and changing variables $s = T-v$, it reads
    \[
    x(u) \leq \Lip_{\mathcal{R}_2}\norm{K}_{L^1(0,T)} \norm{m^1-m^2}_\infty + \Lip_{\mathcal{R}_2}\int_0^u K(u-v)\,x(v)\de v, \qquad u \in [0,T].
    \]
    By Assumption \eqref{ass:kernel} we see that $K\geq 0$ and $K \in L^1(0,T)$, therefore the resolvent $r$ of $-\Lip_{\mathcal{R}_2}K$ is non-positive and belongs to $L^1(0,T)$. The convolutional Gronwall inequality (Theorem~\ref{thm:gronwall}) then gives
    \[
     \norm{\Lambda^1_t-\Lambda^2_t}_{L^1_\mu} \leq c_K \norm{m^1-m^2}_\infty, \qquad c_K := \Lip_{\mathcal{R}_2}\norm{K}_{L^1(0,T)}\big(1+\norm{r}_{L^1(0,T)}\big),
    \]
    and we conclude by taking the supremum over $t \in [0,T]$.
\end{proof}

\begin{remark}
\label{rmk:horizon dependence}
    The constants of this appendix are computed on a fixed horizon $T$. They depend only on $\mu$, $T$ and the matrices $B,C,D,F,Q,R$, and they are non-decreasing in $T$. Indeed, write $\Gamma^{(T)}$ for the solution of the first Riccati equation on $[0,T]$. Its coefficients depend neither on time nor on $g_0,\beta,\gamma,L,m$, so that, if $T\leq T'$, the function $t\mapsto\Gamma^{(T')}_{t+T'-T}$ solves the same equation on $[0,T]$; by uniqueness, $\Gamma^{(T)}_t=\Gamma^{(T')}_{t+T'-T}$ for $t\in[0,T]$. Hence, taking $c_\Gamma$ as the smallest constant in Proposition~\ref{prop:gamma}, the quantities $c_\Gamma$, $\sup_{t\in[0,T]}\norm{\Gamma_t}_{L^1_{\mu\otimes\mu}}$, $c_R$, $c_S$, $\Lip_{h,m}$ and $\Lip_{\mathcal R_2}$ are non-decreasing in $T$, while $\Lip_{h,\ell}=|C|$ does not depend on $T$. The same holds for $c_K$, since $|r|=\sum_{n\geq1}(\Lip_{\mathcal R_2}K)^{*n}$ has an $L^1(0,T)$-norm which is non-decreasing both in $\Lip_{\mathcal R_2}$ and in $T$. Consequently $\Lip_\psi$ (Lemma~\ref{lemma:psi_lip}) and $\rho_T=\Lip_\psi\norm{K}_{L^1(0,T)}$ are non-decreasing in $T$. Moreover $\rho_T\to0$ as $T\downarrow0$, because $\Lip_\psi$ stays bounded for $T\leq1$ and $\norm{K}_{L^1(0,T)}\to0$.
\end{remark}

\section{Auxiliary estimates for the mean field limit}
\label{appendix:convergence tools}
This appendix collects two estimates used in the convergence analysis of Section~\ref{sec:propagation}: a law of large numbers quantifying the fluctuations of empirical means, and a moment estimate for lifted stochastic Volterra dynamics driven by general coefficients.

\begin{lemma}
\label{lemma:CLT}
    Let $(U^i)_{i \geq 1}$ be a sequence of $\real^d$-valued i.i.d.\ stochastic processes such that $\sup_{t \in [0,T]} \E[|U^i_t|^r] \leq c_U$ for some $r\geq 2$ and some positive constant $c_U$. Then, for every $N \geq 1$ and $1 \leq p \leq r$ we have
    \begin{equation*}
        \sup_{t \in [0,T]}\E\left[ \left| \frac{1}{N}\sum_{j=1}^N U^j_t - \E[U^1_t] \right|^p \right] \leq \frac{c}{N^{p/2}},
    \end{equation*}
    where $c$ is a constant depending only on $p$, $r$ and $c_U$.
\end{lemma}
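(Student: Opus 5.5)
The statement to prove is Lemma~\ref{lemma:CLT}. The plan is to reduce it, for each fixed $t$, to a moment bound for a sum of i.i.d.\ centred random vectors. The bound must be uniform in $t$, and this uniformity will follow because every constant depends only on $c_U$, $p$ and $r$. Fix $t\in[0,T]$ and set $Z^j:=U^j_t-\E[U^1_t]$. The $Z^j$ are i.i.d., centred, and by Minkowski and Jensen
\[
\E[|Z^j|^r]\leq 2^{r}\E[|U^j_t|^r]\leq 2^r c_U .
\]
The quantity to bound is $N^{-p}\E\big[|\sum_{j=1}^N Z^j|^p\big]$.

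\emph{Case $p\geq2$.} I would use the Marcinkiewicz--Zygmund inequality, or equivalently Burkholder's inequality for the discrete martingale $S_k=\sum_{j\leq k}Z^j$, applied componentwise in $\real^d$. It gives
\[
\E\Big[\Big|\sum_{j=1}^N Z^j\Big|^p\Big]\leq c(p,d)\,\E\Big[\Big(\sum_{j=1}^N|Z^j|^2\Big)^{p/2}\Big].
\]
Since $p/2\geq1$, Jensen's inequality for the average $\frac1N\sum_j$ yields $\big(\sum_j|Z^j|^2\big)^{p/2}\leq N^{p/2-1}\sum_j|Z^j|^p$. Taking expectations, the right-hand side is at most $N^{p/2}\E[|Z^1|^p]$. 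Because $p\leq r$, Lyapunov's inequality gives $\E[|Z^1|^p]\leq(2^rc_U)^{p/r}$. Dividing by $N^p$ gives the rate $N^{-p/2}$.

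\emph{Case $1\leq p<2$.} I would first do $p=2$ directly: independence and centring kill the cross terms, so
\[
\E\Big[\Big|\tfrac1N\sum_j Z^j\Big|^2\Big]=\tfrac1N\E[|Z^1|^2]\leq \tfrac{c}{N}.
\]
Then Jensen's inequality $\E[|V|^p]\leq\E[|V|^2]^{p/2}$ gives the rate $N^{-p/2}$ for every $p<2$.

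Finally I would take the supremum over $t$, which is harmless because the constant never depended on $t$. The only step requiring care is quoting the Marcinkiewicz--Zygmund/Burkholder inequality with a constant depending only on $p$ (and $d$, which is part of the data). Nothing else is delicate. If one prefers to avoid non-integer $p$, the paper only uses $p\in\{1,2,3,4\}$. In that case one can bound $p=4$ by expanding the fourth power, where only terms of the form $\E|Z^j|^4$ and $\E[|Z^j|^2|Z^k|^2]$ survive, giving $O(N^2)$ such terms. The case $p=3$ then follows by interpolation between $p=2$ and $p=4$.
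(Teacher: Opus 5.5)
Your proposal is correct and follows essentially the same route as the paper: for $p\geq 2$ the paper also applies the Marcinkiewicz--Zygmund inequality followed by Jensen's inequality for the empirical average, and for $1\leq p<2$ it reduces to the case $p=2$ via $\E[|V|^p]\leq \E[|V|^2]^{p/2}$. Your deviations are cosmetic. You compute the case $p=2$ directly by independence, and you invoke Lyapunov's inequality to pass from the $r$-th to the $p$-th moment, which is slightly more careful than the paper's direct use of $\E[|U^1_t|^p]\leq c_U$.
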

\begin{proof}
    Fix $t \in [0,T]$. An application of the Marcinkiewicz-Zygmund inequality (see e.g.~\cite[Section~10.3, Theorem~2]{Chow_Teicher_prob_book}) gives
    \begin{align*}
        \E\left[ \left| \frac{1}{N}\sum_{j=1}^N U^j_t - \E[U^1_t] \right|^p \right] & \leq  \frac{c_p}{N^{p/2}}\, \E\left[ \left| \frac{1}{N}\sum_{j=1}^N |U^j_t - \E[U^1_t]|^2 \right|^{p/2} \right]  \\[0.2cm]
        & \leq \frac{c_p}{N^{p/2}}\, \E\left[  \frac{1}{N}\sum_{j=1}^N |U^j_t - \E[U^1_t]|^p  \right] = \frac{c_p}{N^{p/2}}\,\E\left[ |U^1_t-\E[U^1_t]|^p \right],
    \end{align*}
        where in the second line we have used Jensen's inequality, which requires $p\geq2$, and the fact that the $U^j$ are identically distributed. Since $\E[ |U^1_t-\E[U^1_t]|^p ] \leq \tilde{c}_p\, \E[ |U^1_t|^p ] \leq \tilde{c}_p c_U$, this proves the claim for $2\leq p\leq r$. For $1\leq p<2$ it suffices to apply the case $p=2$ and the inequality $\E[|Z|^p]\leq\E[|Z|^2]^{p/2}$.
\end{proof}

\begin{proposition}
\label{prop:usual machinery}
    Suppose Assumption~\ref{ass:kernel} holds. Let $p \geq 2$ and let $\Delta^1, \Delta^2$ be two progressively measurable processes such that $\sup_{t\in [0,T]} \E\left[ \left| \Delta^1_t \right|^p + \left| \Delta^2_t \right|^p \right] < \infty$, and define
    \[
    Z_t(\theta) := \int_0^t e^{-\theta(t-s)}\Delta^1_s\,\de s + \int_0^t e^{-\theta(t-s)}\Delta^2_s\,\de W^i_s, \quad t \in [0,T], \mu\text{-a.e.}    
    \]
    Then there exists a kernel $\widehat{K} \in L^1(0,T)$ such that $-\widehat{K}$ has a non-positive resolvent $\widehat{r}$ for which it holds
    \begin{equation}
    \label{eq:usual machinery}
        \mathbb{E}\left[ \norm{Z_t}_{L^1_\mu}^p \right] \leq c\int_0^t \widehat{K}(t-s)\,\E\left[|\Delta^1_s|^p+|\Delta^2_s|^p\right]\de s, \quad t \in [0,T],
    \end{equation}
    for some positive constant $c$ depending only on $\mu$, $p$ and $T$.
\end{proposition}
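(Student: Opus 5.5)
I will bound the drift and stochastic parts separately. Write $Z_t=Z^1_t+Z^2_t$ with $Z^1_t(\theta)=\int_0^te^{-\theta(t-s)}\Delta^1_s\de s$ and $Z^2_t(\theta)=\int_0^te^{-\theta(t-s)}\Delta^2_s\de W^i_s$. Then $\E\norm{Z_t}^p_{L^1_\mu}\le 2^{p-1}(\E\norm{Z^1_t}^p_{L^1_\mu}+\E\norm{Z^2_t}^p_{L^1_\mu})$.

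\emph{The drift part.} By Tonelli, $\norm{Z^1_t}_{L^1_\mu}\le\int_0^tK(t-s)|\Delta^1_s|\de s$. Hölder with respect to the finite measure $K(t-s)\de s$ gives
$$
\norm{Z^1_t}_{L^1_\mu}^p\le\norm{K}_{L^1(0,T)}^{p-1}\int_0^tK(t-s)|\Delta^1_s|^p\de s .
$$
So this part contributes the kernel $K$ itself.

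\emph{The stochastic part.} This is the delicate step, because the $L^1_\mu$ norm is an integral over $\theta$ of stochastic integrals. I would use a weighted Minkowski/Hölder argument exploiting \eqref{hp:integrability on mu}. Set $w(\theta):=1\land\theta^{-\zeta}$, which lies in $L^1_\mu$ by Assumption~\ref{ass:kernel}(ii). Write $\norm{Z^2_t}_{L^1_\mu}=\int |Z^2_t(\theta)|\,w(\theta)^{-1}w(\theta)\mu(\de\theta)$ and apply Hölder in $\theta$ against $w\,\de\mu$:
$$
\norm{Z^2_t}_{L^1_\mu}^p\le\norm{w}_{L^1_\mu}^{p-1}\int|Z^2_t(\theta)|^p w(\theta)^{1-p}\mu(\de\theta).
$$
Taking expectations, BDG and Jensen (in $s$) give
$$
\E|Z^2_t(\theta)|^p\le c\Big(\int_0^te^{-2\theta(t-s)}\de s\Big)^{p/2-1}\int_0^te^{-2\theta(t-s)}\E|\Delta^2_s|^p\de s .
$$
Since $\int_0^te^{-2\theta r}\de r\le T\land(2\theta)^{-1}$, the prefactor is bounded by $c(1\land\theta^{-1})^{p/2-1}$. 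Integrating in $\theta$ with Fubini yields $\int_0^t\widetilde K(t-s)\E|\Delta^2_s|^p\de s$, where
$$
\widetilde K(r):=\int e^{-2\theta r}(1\land\theta^{-1})^{p/2-1}w(\theta)^{1-p}\mu(\de\theta).
$$
The main obstacle is checking $\widetilde K\in L^1(0,T)$. We have $\int_0^T e^{-2\theta r}\de r\le c(1\land\theta^{-1})$, so the integrand in $\theta$ behaves for large $\theta$ like $\theta^{-p/2}\theta^{\zeta(p-1)}$. This must be $\mu$-integrable, which holds if it is dominated by $\theta^{-\zeta}$, i.e. if $p/2\ge\zeta p$, true since $\zeta<1/4<1/2$. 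For small $\theta$ everything is bounded and $\mu$ is Radon. If the exponent bookkeeping fails for some $p$, the fallback is to adjust the weight to $w=1\land\theta^{-a}$ with $a\in[\zeta,1/2)$, which keeps $w\in L^1_\mu$ and leaves room in the exponents.

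\emph{Conclusion.} Set $\widehat K:=K+\widetilde K$, which is non-negative and lies in $L^1(0,T)$, and collect the constants (depending only on $\mu,p,T$). For the resolvent claim, note that $-\widehat K\le0$ is in $L^1(0,T)$ and hence of type $L^1$ on $[0,T]$. Since $\trinorm{\widehat K}_{L^1(J)}\le\norm{\widehat K}_{L^1(J)}$ is small on short subintervals $J$ by absolute continuity of the integral, the sufficient-condition proposition following Theorem~\ref{thm:gronwall} applies. It gives that the resolvent $\widehat r$ of $-\widehat K$ is non-positive and of type $L^1$, which yields \eqref{eq:usual machinery}.
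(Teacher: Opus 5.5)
Your proof is correct but organised differently from the paper's. The paper does not split $Z_t$. It applies one weighted Jensen inequality to all of $\norm{Z_t}_{L^1_\mu}$, using the probability measure proportional to $(1\land\theta^{-1/2})\mu(\de\theta)$. It then uses Cauchy--Schwarz in $s$ so that the drift term takes the same form $\big(\int_0^t e^{-2\theta(t-s)}|\Delta^1_s|^2\de s\big)^{p/2}$ as the BDG bound for the martingale term. Finally it bounds the resulting kernel pointwise, via $(1\vee\theta^{(p-1)/2})(1\land\theta^{1-p/2})=1\vee\theta^{1/2}$ and $\sup_\theta \theta^{1/2}e^{-\theta r}=(2er)^{-1/2}$. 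This gives the explicit kernel $\widehat K(t)=(1\vee(2et)^{-1/2})K(t)$, whose integrability is the case $\alpha=1/2$ of Proposition~\ref{prop:mu_properties}.

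You instead treat the drift part with no weight: Tonelli and H\"older against $K(t-s)\de s$ give $K$ directly. You use the weight $1\land\theta^{-\zeta}$, taken straight from \eqref{hp:integrability on mu}, only for the stochastic part. You keep the resulting kernel $\widetilde K$ in Laplace-transform form and check $\widetilde K\in L^1(0,T)$ by integrating in $r$ first. Your exponent count is right: the only requirement is $\zeta\le 1/2$, so the fallback you mention is never needed.

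The paper's route gives a single explicit kernel that does not depend on $p$ and is a multiple of $K$. Yours is more elementary, with no pointwise optimisation in $\theta$ and no appeal to Proposition~\ref{prop:mu_properties}. The price is a $p$-dependent kernel, which is harmless because in Section~\ref{sec:propagation} the kernel and its resolvent are allowed to depend on $p$.

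You also justify the non-positivity of the resolvent through the sufficient condition stated after Theorem~\ref{thm:gronwall}, which the paper only asserts. One small correction there. The correct bound is $\trinorm{\widehat K}_{L^1(J)}\le\norm{\widehat K}_{L^1(0,\ell)}$, where $\ell$ is the length of $J$, not $\norm{\widehat K}_{L^1(J)}$. Only the values of $\widehat K$ on $[0,\ell]$ enter, and for $J$ away from $0$ the inequality as you wrote it can fail. The smallness on short subintervals, and hence your conclusion, still follows from the integrability of $\widehat K$ near $0$.
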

\begin{proof}
        Following the proof of \cite[Theorem 4.1]{LQ_Volterra_AJMP}, we apply Jensen's inequality on the probability measure $\frac{1 \land \theta^{-1/2}}{\int_0^\infty 1 \land \tau^{-1/2}\mu(d\tau) }\mu(\de\theta)$ (the constant $c$ may change from line to line)
    \begin{align*}
       \norm{Z_t}^p_{L^1_\mu} & = \left(\int_0^\infty 1 \land \tau^{-1/2}\mu(\de\tau)\right)^p\left( \int_0^\infty \frac{1}{1 \land \theta^{-1/2}}|Z_t(\theta)| \frac{1 \land \theta^{-1/2}\mu(\de \theta)}{\int_0^\infty 1 \land \tau^{-1/2}\mu(\de\tau)}  \right)^p \\[0.2cm]
       & \leq c \int_0^\infty \left(1 \vee \theta^{\frac{p-1}{2}}\right) \left| \int_0^t e^{-\theta(t-s)}(\Delta^1_s\de s+ \Delta^2_s\de W_s^i) \right|^p \mu(\de\theta) \\[0.2cm]
       & \leq c \int_0^\infty \left(1 \vee \theta^{\frac{p-1}{2}}\right)\left\{\left| \int_0^t e^{-\theta(t-s)}\Delta^1_s\de s\right|^p +\left| \int_0^t e^{-\theta(t-s)}\Delta^2_s\de W^i_s \right|^p\right\}\mu(\de\theta).
    \end{align*}
    Taking the expectation and using the BDG inequality, we get
    \begin{align*}
    \mathbb{E}\!\left[ \|Z_t\|_{L^1_\mu}^p \right]
    &\leq c \int_0^\infty \!\left(1 \vee \theta^{\frac{p-1}{2}}\right)
    \Bigg\{
        \mathbb{E}\!\left[
            \left| \int_0^t e^{-\theta(t-s)} \Delta^1_s \,\de s\right|^p\right]  %
        + \mathbb{E}\!\left[
            \left| \int_0^t e^{-2\theta(t-s)} |\Delta^2_s|^2 \, \de s\right|^{p/2}\right]\Bigg\}\, \mu(\de\theta) \\[0.2cm]
            & \leq c \int_0^\infty \!\left(1 \vee \theta^{\frac{p-1}{2}}\right)
    \Bigg\{
        \mathbb{E}\!\left[
            \left| \int_0^t e^{-2\theta(t-s)} |\Delta^1_s|^2\,\de s\right|^{p/2}\right]  %
        + \mathbb{E}\!\left[
            \left| \int_0^t e^{-2\theta(t-s)} |\Delta^2_s|^2 \, \de s\right|^{p/2}\right]\Bigg\}\, \mu(\de\theta), 
    \end{align*}
    where in the last inequality we have used Jensen's inequality on the probability measure $\frac{\de s}{t}$ on $[0,t]$. Now, we use Jensen again on the probability measure $\frac{e^{-2\theta(t-s)}}{\int_0^t e^{-2\theta(t-u)}\de u}\de s$ to get
    \begin{align*}
        \mathbb{E}\!\left[ \|Z_t\|_{L^1_\mu}^p \right] & \leq c\int_0^\infty \!\left(1 \vee \theta^{\frac{p-1}{2}}\right)\left( \int_0^t e^{-2\theta(t-u)}\de u \right)^{\frac{p}{2}-1}  \int_0^t e^{-2\theta(t-s)} \E\left[|\Delta^1_s|^p+|\Delta^2_s|^p\right]\de s\,  \mu(\de\theta) \\[0.2cm]
        & = c \int_0^t \E\left[|\Delta^1_s|^p+|\Delta^2_s|^p\right] f(t-s)\,\de s,
    \end{align*}
    where
    \begin{align*}
        f(t-s) & = \int_0^\infty \left(1 \vee \theta^{\frac{p-1}{2}}\right)\left( \int_0^t e^{-2\theta(t-u)}\de u \right)^{\frac{p}{2}-1}e^{-2\theta(t-s)}  \mu(\de\theta) \\[0.2cm]
        & = \int_0^\infty  \left(1 \vee \theta^{\frac{p-1}{2}}\right) \left( \frac{1-e^{-2\theta t}}{2\theta} \right)^{\frac{p}{2}-1} e^{-2\theta(t-s)}\mu(\de\theta). 
    \end{align*}
    Using the inequality $
        \frac{1-e^{-2\theta t}}{2\theta} \leq \frac{1}{2}(1\vee 2t)(1 \land \theta^{-1})$,
    we get
    \begin{align*}
        f(t-s) & \leq c  \int_0^\infty \left(1 \vee \theta^{\frac{p-1}{2}}\right)  \left( 1 \land \theta^{1-\frac{p}{2}} \right) e^{-2\theta(t-s)}\mu(\de\theta) \\[0.2cm]
        & = c \int_0^\infty \left(1 \vee \theta^{\frac{1}{2}}\right)e^{-2\theta(t-s)}\mu(\de\theta) \leq c\left( 1 \vee \frac{1}{\sqrt{2e(t-s)}} \right) \int_0^\infty e^{-\theta(t-s)}\mu(\de\theta),
    \end{align*}
    since $\mu$-a.e. $e^{-\theta(t-s)}(1 \vee \theta^{\frac{1}{2}}) \leq  1 \vee \frac{1}{\sqrt{2e(t-s)}} $. Thus, defining the kernel
    \[
    \widehat{K}(t) := \left( 1 \vee \frac{1}{\sqrt{2et}} \right) K(t),
    \]
    we have that $\widehat{K} \in L^1(0,T)$ by the integrability conditions on $\mu$ (Proposition~\ref{prop:mu_properties}, applied with $\alpha=1/2$) and that $-\widehat{K}$ has a non-positive resolvent $\widehat{r}$. The conclusion \eqref{eq:usual machinery} follows.
\end{proof}

\bibliography{bibliography}
\bibliographystyle{plain}
\end{document}